\definecolor{DarkBlue}{rgb}{0,0.18,0.55}
\theoremstyle{plain}
\newtheorem{theorem}{Theorem}[section]
\newtheorem{lemma}{Lemma}[section]
\newtheorem{corollary}{Corollary}[section]
\newtheorem{assumption}{Assumption}
\theoremstyle{definition}
\newtheorem{remark}{Remark}[section]
\newcommand{\hb}{\widehat\beta}
\renewcommand{\tilde}{\widetilde}
\renewcommand{\hb}{\widehat\beta}
\renewcommand{\qed}{\hfill{\tiny \ensuremath{\blacksquare} }}
\newcommand{\Ep}{{\mathrm{E}}}
\renewcommand{\Pr}{{\mathrm{P}}}
\DeclareMathOperator*{\argmin}{arg\,min}
\begin{document}

\begin{frontmatter}

\title{On Cross-Validated Lasso in High Dimensions\thanksref{T1}}
\runtitle{On Cross-Validated Lasso}
\thankstext{T1}{Date: May, 2016. Revised \today. We thank Mehmet Caner, Matias Cattaneo, Yanqin Fan, Sara van de Geer, Jerry Hausman, James Heckman, Roger Koenker, Andzhey Koziuk, Miles Lopes, Jinchi Lv, Rosa Matzkin, Anna Mikusheva, Whitney Newey, Jesper Sorensen, Vladimir Spokoiny, Larry Wasserman, and seminar participants in many places for helpful comments. Chetverikov's work was partially funded by NSF Grant SES - 1628889. Liao's work was partially funded by NSF Grant SES - 1628889.}

\begin{aug}
\author{\fnms{Denis} \snm{Chetverikov}\thanksref{m1}\ead[label=e1]{chetverikov@econ.ucla.edu}}
\author{\fnms{Zhipeng} \snm{Liao}\thanksref{m2}\ead[label=e2]{zhipeng.liao@econ.ucla.edu}}
\author{\fnms{Victor} \snm{Chernozhukov}\thanksref{m3}\ead[label=e3]{vchern@mit.edu}}

\runauthor{Chetverikov Liao Chernozhukov}

\affiliation{UCLA\thanksmark{m1}, UCLA\thanksmark{m2}, MIT\thanksmark{m3}}

\address{Department of Economics, UCLA\\
Bunche Hall, 8369 \\
315 Portola Plaza \\
Los Angeles, CA 90095, USA.\\
\printead{e1}}

\address{Department of Economics, UCLA\\
Bunche Hall, 8379 \\
315 Portola Plaza \\
Los Angeles, CA 90095, USA.\\
\printead{e2}}

\address{Department of Economics and\\
Operations Research Center, MIT \\
50 Memorial Drive \\
Cambridge, MA 02142, USA.\\
\printead{e3}}

\end{aug}

\begin{abstract}
In this paper, we derive non-asymptotic error bounds for the Lasso estimator when the penalty parameter for the estimator is chosen using $K$-fold cross-validation. Our bounds imply that the cross-validated Lasso estimator has nearly optimal rates of convergence in the prediction, $L^2$, and $L^1$ norms. For example, we show that in the model with the Gaussian noise and under fairly general assumptions on the candidate set of values of the penalty parameter, the estimation error of the cross-validated Lasso estimator converges to zero in the prediction norm with the $\sqrt{s\log p / n}\times \sqrt{\log(p n)}$ rate, where $n$ is the sample size of available data, $p$ is the number of covariates, and $s$ is the number of non-zero coefficients in the model. Thus, the cross-validated Lasso estimator achieves the fastest possible rate of convergence in the prediction norm up to a small logarithmic factor $\sqrt{\log(p n)}$, and similar conclusions apply for the convergence rate both in $L^2$ and in $L^1$ norms. Importantly, our results cover the case when $p$ is (potentially much) larger than $n$ and also allow for the case of non-Gaussian noise. Our paper therefore serves as a justification for the widely spread practice of using cross-validation as a method to choose the penalty parameter for the Lasso estimator.
\end{abstract}

\begin{keyword}
\kwd{cross-validation}
\kwd{Lasso}
\kwd{high-dimensional models}
\kwd{sparsity}
\kwd{non-asymptotic bounds}
\end{keyword}

\end{frontmatter}

\section{Introduction}
Since its invention by Tibshirani in \cite{T96}, the Lasso estimator has
become increasingly important in many fields, and a large number of papers
have studied its properties. Many of these papers have been concerned with
the choice of the penalty parameter $\lambda $ required for the
implementation of the Lasso estimator. As a result, several methods to
choose $\lambda $ have been proposed and theoretically justified; see \cite%
{ZHT07}, \cite{BRT09}, \cite{BC13}, \cite{SZ13}, and \cite{PB18} among other papers. Nonetheless, in practice researchers often rely upon cross-validation to choose $\lambda $,
see \cite{CJ15}, and in fact, based on simulation evidence, using
cross-validation to choose $\lambda $ remains a leading recommendation in
the theoretical literature (see textbook-level discussions in \cite{BG11}, 
\cite{HTW15}, and \cite{G15}). However, to the best of our knowledge, there
exist very few results about properties of the Lasso
estimator when $\lambda $ is chosen using cross-validation; see a review below. The purpose of this paper is to fill this gap and to
derive non-asymptotic error bounds for the cross-validated Lasso estimator
in different norms.

We consider the regression model 
\begin{equation}  \label{eq: model}
Y=X^{\prime }\beta + \varepsilon,\quad {\mathrm{E}}[\varepsilon \mid X] = 0,
\end{equation}
where $Y$ is a dependent variable, $X=(X_1,\dots,X_p)^{\prime }$ a $p$%
-vector of covariates, $\varepsilon$ unobserved scalar noise, and $%
\beta=(\beta_1,\dots,\beta_p)^{\prime }$ a $p$-vector of coefficients.
Assuming that a random sample of size $n$, $(X_i,Y_i)_{i=1}^n$, from the
distribution of the pair $(X,Y)$ is available, we are interested in
estimating the vector of coefficients $\beta$. We consider triangular array
asymptotics, so that the distribution of the pair $(X,Y)$, and in particular
the dimension $p$ of the vector $X$, is allowed to depend on $n$ and can be
larger or even much larger than $n$. For simplicity of notation, however, we
keep this dependence implicit.

We impose a standard assumption that the vector of coefficients $\beta $ is
sparse in the sense that $s=s_{n}=\Vert \beta \Vert _{0}=\sum_{j=1}^{p}1\{
\beta _{j}\neq 0\}$ is relatively small. Under this assumption, the
effective way to estimate $\beta $ was proposed by Tibshirani in \cite{T96},
who introduced the Lasso estimator, 
\begin{equation}
\widehat{\beta }(\lambda )=\argmin_{b\in \mathbb{R}^{p}}\left( \frac{1}{n}%
\sum_{i=1}^{n}(Y_{i}-X_{i}^{\prime }b)^{2}+\lambda \Vert b\Vert _{1}\right) ,
\label{eq: lasso}
\end{equation}%
where for $b=(b_{1},\dots ,b_{p})^{\prime }\in \mathbb{R}^{p}$, $\Vert
b\Vert _{1}=\sum_{j=1}^{p}|b_{j}|$ denotes the $L^{1}$ norm of $b$, and $%
\lambda $ is some penalty parameter (the estimator suggested in Tibshirani's
paper takes a slightly different form but over time the version 
\eqref{eq:
lasso} has become more popular, probably for computational reasons). In
principle, the optimization problem in \eqref{eq: lasso} may have multiple
solutions, but to simplify presentation and to avoid unnecessary
technicalities, we assume throughout the paper, without further notice, that the distribution of $X$
is absolutely continuous with respect to Lebesgue measure on $\mathbb{R}%
^{p}$, in which case the optimization problem in \eqref{eq: lasso} has the
unique solution with probability one; see Lemma 4 in \cite{T13}. Without this assumption, our results would apply to the sparsest solution.

To perform the Lasso estimator $\widehat{\beta }(\lambda )$, one has to
choose the penalty parameter $\lambda $. If $\lambda $ is chosen
appropriately, the Lasso estimator attains the optimal rate of convergence
under fairly general conditions; see, for example, \cite{BRT09}, \cite{BC11}%
, and \cite{RT11}. On the other hand, if $\lambda $ is not chosen
appropriately, the Lasso estimator may not be consistent or may have a
slower rate of convergence; see \cite{C14}. Therefore, it is important to
choose $\lambda $ appropriately. In this paper, we show that $K$-fold
cross-validation indeed provides an appropriate way to choose $\lambda $.
More specifically, we derive non-asymptotic error bounds for the Lasso
estimator $\widehat{\beta }(\lambda )$ with $\lambda =\widehat{\lambda }$
being chosen by $K$-fold cross-validation in the prediction, $L^{2}$, and $%
L^{1}$ norms. Our bounds reveal that the cross-validated Lasso estimator
attains the optimal rate of convergence up to certain logarithmic factors in
all of these norms. For example, when the conditional distribution of the
noise $\varepsilon $ given $X$ is Gaussian, the $L^{2}$ norm bound in
Theorem \ref{thm: l1 and l2 bounds} implies that 
\begin{equation*}
\Vert \widehat{\beta }(\widehat{\lambda })-\beta \Vert _{2}=O_{P}\left( 
\sqrt{\frac{s\log p}{n}}\times \sqrt{\log (pn)}\right) ,
\end{equation*}%
where for $b=(b_{1},\dots ,b_{p})^{\prime }\in \mathbb{R}^{p}$, $\Vert
b\Vert _{2}=(\sum_{j=1}^{p}b_{j}^{2})^{1/2}$ denotes the $L^{2}$ norm of $b$%
. Here, $\sqrt{s\log p/n}$ represents the optimal rate of convergence, and
the cross-validated Lasso estimator attains this rate up to a small $\sqrt{%
\log (pn)}$ factor. Throughout the paper, we assume that $K$ is fixed, i.e., independent of $n$. Our results therefore do not cover leave-one-out cross-validation.

Given that cross-validation is often used to choose the penalty parameter $%
\lambda$ and given how popular the Lasso estimator is, understanding the
rate of convergence of the cross-validated Lasso estimator seems to be an
important research question. Yet, to the best of our knowledge, the only
results in the literature about the cross-validated Lasso estimator are due
to Homrighausen and McDonald \cite{HM13a, HM14, HM13b} and Miolane and
Montanari \cite{MM18} but all these papers imposed extremely strong
conditions and made substantial use of these conditions meaning that it is
not clear how to relax them. In particular, \cite{HM14} assumed that $p$ is
much smaller than $n$, and only showed consistency of the (leave-one-out)
cross-validated Lasso estimator. \cite{HM13b}, which strictly improves upon 
\cite{HM13a}, assumed that the smallest value of $\lambda$ in the candidate
set, over which cross-validation search is performed, is so large that all
considered Lasso estimators are guaranteed to be sparse, but, as we explain
below, it is exactly the low values of $\lambda$ that make the analysis of
the cross-validated Lasso estimator difficult. (In addition, and equally
important, the smallest value of $\lambda$ in \cite{HM13b} exceeds the
Bickel-Ritov-Tsybakov $\lambda = \lambda^*$, and we find via simulations
that the cross-validated $\lambda = \widehat \lambda$ is smaller than $%
\lambda^*$, at least with high probability, whenever the candidate set is
large enough, see Remarks \ref{rem: 1} and \ref{rem: proof} for further
details; this suggests that the cross-validated $\lambda$ based on the
Homrighausen-McDonald candidate set will be with high probability equal to
the smallest value in the candidate set, which makes the cross-validation
search less interesting.) \cite{MM18} assumed that $p$ is proportional to $n$
and that the vector $X$ consists of i.i.d. Gaussian random variables, and
their estimation error bounds do not converge to zero whenever $K$ is fixed
(independent of $n$). In contrast to these papers, we allow $p$ to be much
larger than $n$ and $X$ to be non-Gaussian, with possibly correlated
components, and we also allow for very large candidate sets.

Other papers that have been concerned with cross-validation in the context
of the Lasso estimator include Chatterjee and Jafarov \cite{CJ15} and Lecu%
\'{e} and Mitchell \cite{LM12}. \cite{CJ15} developed a novel
cross-validation-type procedure to choose $\lambda$ and showed that the
Lasso estimator based on their choice of $\lambda$ has a rate of convergence
depending on $n$ via $n^{-1/4}$. Their procedure to choose $\lambda$,
however, is related to but different from the classical cross-validation
procedure used in practice, which is the target of study in our paper. \cite%
{LM12} studied classical cross-validation but focused on estimators that
differ from the Lasso estimator in important ways. For example, one of the
estimators they considered is the average of subsample Lasso estimators, $%
K^{-1}\sum_{k=1}^K\widehat \beta_{-k}(\lambda)$, for $\widehat
\beta_{-k}(\lambda)$ defined in \eqref{eq: subsample lasso} in the next
section. Although the authors studied properties of the cross-validated
version of such estimators in great generality, it is not immediately clear
how to apply their results to obtain bounds for the cross-validated Lasso
estimator itself. We also emphasize that our paper is not related to Abadie
and Kasy \cite{AK18} because they do consider the cross-validated Lasso
estimator but in a very different setting, and, moreover, their results are
in the spirit of those in \cite{LM12}. (The results of \cite{AK18} can be
applied in the regression setting \eqref{eq: model} but the application
would require $p$ to be smaller than $n$ and their estimators in this case
would differ from the cross-validated Lasso estimator studied here.)

Finally, we emphasize that deriving a rate of convergence of the
cross-validated Lasso estimator is a non-standard problem. From the Lasso
literature perspective, a fundamental problem is that most existing results
require that $\lambda$ is chosen so that $\lambda > 2\|n^{-1}\sum_{i=1}^n
X_i \varepsilon_i\|_{\infty}$, at least with high probability, but,
according to simulation evidence, this inequality typically does not hold if 
$\lambda$ is chosen by cross-validation, meaning that existing results can
not be used to analyze the cross-validated Lasso estimator; see Section \ref%
{sec: main results} for more details and \cite{G15}, page 105, for
additional complications. Also, classical techniques to derive properties of
cross-validated estimators developed, for example, in \cite{L87} do not
apply to the Lasso estimator as those techniques are based on the linearity
of the estimators in the vector of values $(Y_1,\dots,Y_n)^{\prime }$ of the
dependent variable, which does not hold in the case of the Lasso estimator.
More recent techniques, developed, for example, in \cite{W03}, help to
analyze sub-sample Lasso estimators like those studied in \cite{LM12} but
are not sufficient for the analysis of the full-sample Lasso estimator
considered here. See \cite{AC10} for an extensive review of results on
cross-validation available in the literature.



The rest of the paper is organized as follows. In the next section, we
describe the cross-validation procedure. In Section \ref{sec: regularity
conditions}, we state our regularity conditions. In Section \ref{sec: main
results}, we present our main results. In Section \ref{sec: sparsity bound},
we describe novel sparsity bounds, which constitute one of the main
building blocks in our analysis of the cross-validated Lasso estimator. In
Section \ref{sec: simulations}, we conduct a small Monte Carlo simulation
study demonstrating that performance of the Lasso estimator based on the penalty parameter selected by cross-validation is comparable and often better than that of the Lasso estimator based on various plug-in rules. In Section \ref{sec: proofs estimation error}, we provide proofs of the main results on the estimation error bounds. In Section \ref{sec: proofs sparsity}, we provide proofs of our sparsity bounds. In Section \ref{sec: technical lemmas}, we collect some technical lemmas that are useful for the
proofs of the main results.

\medskip

\textbf{Notation.} Throughout the paper, we use the following notation. For
any vector $b=(b_{1},\dots ,b_{p})^{\prime }\in \mathbb{R}^{p}$, we use $%
\Vert b\Vert _{0}=\sum_{j=1}^{p}1\{b_{j}\neq 0\}$ to denote the number of
non-zero components of $b$, $\Vert b\Vert _{1}=\sum_{j=1}^{p}|b_{j}|$ to
denote its $L^{1}$ norm, $\Vert b\Vert _{2}=(\sum_{j=1}^{p}b_{j}^{2})^{1/2}$
to denote its $L^{2}$ norm, $\Vert b\Vert _{\infty }=\max_{1\leq j\leq
p}|b_{j}|$ to denote its $L^{\infty }$ norm, and $\Vert b\Vert
_{2,n}=(n^{-1}\sum_{i=1}^{n}(X_{i}^{\prime }b)^{2})^{1/2}$ to denote its
prediction norm. Also, for any random variable $Z$, we use $\|Z\|_{\psi_1}$ and $\|Z\|_{\psi_2}$ to denote its $\psi_1$- and $\psi_2$- Orlicz norms. In addition, we denote $X_{1}^{n}=(X_{1},\dots ,X_{n})$. Moreover, we use $\mathcal{S}^{p}$ to denote the unit sphere in $\mathbb{R}%
^{p}$, that is, $\mathcal{S}^{p}=\{ \delta \in \mathbb{R}^{p}\colon \Vert
\delta \Vert _{2}=1\}$, and for any $\ell>0$, we use $\mathcal S^p(\ell)$ to denote the $\ell$-sparse subset of $\mathcal S^p$, that is, $\mathcal S^p(\ell) = \{\delta\in \mathcal S^p\colon \|\delta\|_0\leq \ell\}$. We introduce more notation in the beginning of
Section \ref{sec: proofs estimation error}, as required for the proofs of
the main results.


\section{Cross-Validation\label{sec: cross validation}}

As explained in the Introduction, to choose the penalty parameter $\lambda $
for the Lasso estimator $\widehat{\beta }(\lambda )$, it is common practice
to use cross-validation. In this section, we describe the procedure in
details. Let $K$ be some strictly positive (typically small) integer, and
let $(I_{k})_{k=1}^{K}$ be a partition of the set $\{1,\dots ,n\}$; that is,
for each $k\in \{1,\dots ,K\}$, $I_{k}$ is a subset of $\{1,\dots ,n\}$, for
each $k,k^{\prime }\in \{1,\dots ,K\}$ with $k\neq k^{\prime }$, the sets $%
I_{k}$ and $I_{k^{\prime }}$ have empty intersection, and $\cup
_{k=1}^{K}I_{k}=\{1,\dots ,n\}$. For our asymptotic analysis, we will assume
that $K$ is a constant that does not depend on $n$. Further, let $\Lambda
_{n}$ be a set of candidate values of $\lambda $. Now, for $k=1,\dots ,K$
and $\lambda \in \Lambda _{n}$, let 
\begin{equation}
\widehat{\beta }_{-k}(\lambda )=\argmin_{b\in \mathbb{R}^{p}}\left( \frac{1}{%
n-n_{k}}\sum_{i\notin I_{k}}(Y_{i}-X_{i}^{\prime }b)^{2}+\lambda \Vert
b\Vert _{1}\right)  \label{eq: subsample lasso}
\end{equation}%
be the Lasso estimator corresponding to all observations excluding those in $%
I_{k}$ where $n_{k}=|I_{k}|$ is the size of the subsample $I_{k}$. As in the
case with the full-sample Lasso estimator $\widehat{\beta }(\lambda )$ in %
\eqref{eq: lasso}, the optimization problem in \eqref{eq: subsample lasso}
has the unique solution with probability one under our maintained assumption
that the distribution of $X$ is absolutely continuous with respect to the
Lebesgue measure on $\mathbb{R}^{p}$. Then the cross-validation choice of $%
\lambda $ is 
\begin{equation}
\widehat{\lambda }=\argmin_{\lambda \in \Lambda
_{n}}\sum_{k=1}^{K}\sum_{i\in I_{k}}(Y_{i}-X_{i}^{\prime }\widehat{\beta }%
_{-k}(\lambda ))^{2}.  \label{eq: cross validation}
\end{equation}%
The cross-validated Lasso estimator in turn is $\widehat{\beta }(\widehat{%
\lambda })$. In the literature, the procedure described here is also often
referred to as $K$-fold cross-validation. For brevity, however, we simply
refer to it as cross-validation. Below we will study properties of $\widehat{%
\beta }(\widehat{\lambda })$.

We emphasize one more time that although the properties of the estimators $%
\widehat \beta_{-k}(\widehat \lambda)$ have been studied in great generality
in \cite{LM12}, there are very few results in the literature regarding the
properties of $\widehat \beta(\widehat \lambda)$, which is the estimator
used in practice.

\section{Regularity Conditions\label{sec: regularity conditions}}

Recall that we consider the model in \eqref{eq: model}, the Lasso estimator $%
\widehat{\beta }(\lambda )$ in \eqref{eq: lasso}, and the cross-validation
choice of $\lambda $ in \eqref{eq: cross validation}. Let $c_{1}$, $C_{1}$, $%
a$ and $q$ be some strictly positive numbers where $a<1$ and $q > 4$.
Also, let $r\geq 0$ be an integer. In addition,
denote 
\begin{equation}
M_{n}=({\mathrm{E}}[\Vert X\Vert _{\infty }^{q}])^{1/q}.
\label{eq: Mn definition}
\end{equation}%
Throughout the paper, we assume that $s\geq 1$. Otherwise, one has
to replace $s$ by $s\vee 1$. To derive our results, we will impose the
following regularity conditions.

\begin{assumption}[Covariates]
\label{as: covariates} The random vector $X=(X_{1},\dots ,X_{p})^{\prime }$
is such that: (a) for all $\delta \in \mathcal{S}^{p}(n+s)$, we have $\Pr(|X'\delta|\geq c_1)\geq c_1$ and (b) for all $\delta \in \mathcal{S}^{p}(n^{2/q+c_1}M_n^2 s\log^3(pn))$, we have $({\mathrm{E}}[|X^{\prime }\delta|^{2}])^{1/2}\leq C_{1}$.
\end{assumption}

Part (a) of this assumption can be interpreted as a probability version of the ``no multicollinearity condition.'' It is slightly stronger than a more widely used expectation version of the same condition, namely $\Ep[(X'\delta)^2]\geq c_1$ for all $\delta\in\mathcal S^p(n+s)$ (with a possibly different value of the constant $c_1$), meaning that all $(n+s)$-sparse eigenvalues of the population Gram matrix $\Ep[XX']$ are bounded away from zero. Part (b) requires that sufficiently sparse eigenvalues of the matrix $\Ep[XX']$ are bounded from above uniformly over $n$. Note that neither part (a) nor part (b) of Assumption \ref{as: covariates} imposes bounds on the eigenvalues of the empirical
Gram matrix $n^{-1}\sum_{i=1}^{n}X_{i}X_{i}^{\prime }$ (of course, if $p>n$,
the smallest eigenvalue of this matrix is necessarily zero and the largest one can grow with $n$, potentially fast). 



\begin{assumption}[Growth condition]
\label{as: growth condition} The following growth condition is satisfied: $n^{4/q}M_{n}^{4}s\log ^{4}(p n)\leq C_{1}n^{1-c_{1}}$.
\end{assumption}

Assumption \ref{as: growth condition} is a mild growth condition restricting
some moments of $\|X\|_{\infty}$, the number of non-zero coefficients in the model $s$ and the number of parameters in the model $p$. When all components of the vector $X$ are bounded by a constant almost surely, this assumption reduces to
\begin{equation*}
s\log ^{4}p\leq C_{1}n^{1-c_{1}}.
\end{equation*}%
Thus, Assumptions \ref{as: covariates} and \ref{as: growth condition} do allow for the high-dimensional case, with $p$ being much larger than $n$. However, we note that these assumptions are stronger than those used with more conservative choices of $\lambda$; see \cite{BRT09, BC11} for example.



\begin{assumption}[Noise]
\label{as: heterogeneity} There exists a standard Gaussian random variable $%
e $ that is independent of $X$ and a function $Q\colon \mathbb{R}^{p}\times 
\mathbb{R}\rightarrow \mathbb{R}$ that is thrice continuously differentiable
with respect to the second argument such that $\varepsilon = Q(X,e)$ and for all $x\in\mathbb R^p$, (i) $c_{1} \leq Q_{2}(x,e)\leq C_{1}(1+|e|^{r})$, (ii) $\|Q_{22}(x,e)\|_{\psi_2} \leq C_1$, and (iii) $\|Q_{222}(x,e)\|_{\psi_1} \leq C_1$, where we use index $2$ to denote the derivatives with respect to the second argument, so that $Q_{222}(X,e) = \partial^3 Q(X,e)/\partial e^3$, for example.
\end{assumption}
Letting $\Phi$ and $F_{\varepsilon|X}$ denote the cdf of the $N(0,1)$ distribution and the conditional cdf of $\varepsilon$ given $X$, respectively, it follows that whenever $F_{\varepsilon |X}$ is continuous almost surely, the random variable $e = \Phi^{-1}(F_{\varepsilon|X}(\varepsilon))$ has the $N(0,1)$ distribution and is independent of $X$. In this case, we can guarantee that $\varepsilon = Q(X,e)$ by setting $Q(X,e) = Q_{\varepsilon|X}(\Phi(e))$, where $Q_{\varepsilon|X} = F_{\varepsilon|X}^{-1}$ is the conditional quantile function of $\varepsilon$ given $X$. In addition, Assumption \ref{as: heterogeneity} imposes certain smoothness conditions. In particular, it requires that the transformation function $e\mapsto Q(X,e)$, which generates the noise variable $\varepsilon$ from the $N(0,1)$ variable $e$, is smooth in the sense that it satisfies certain derivative bounds.

Assumption \ref{as: heterogeneity} is rather non-standard. It appears in our analysis because, as explained in Remark \ref{rem: proof} below, we rely upon the degrees of freedom formula for the Lasso estimator to establish some sparsity bounds. In turn, this formula, being a consequence of the Stein identity characterizing the standard Gaussian distribution, has a simple form whenever $\varepsilon\sim N(0,\sigma^2)$; see \cite{ZHT07} and \cite{TT12}. We extend this formula to the non-Gaussian case under the condition that the noise variable $\varepsilon$ is a smooth transformation of $e\sim N(0,1)$ as required by Assumption \ref{as: heterogeneity}. Note that Assumption \ref{as: heterogeneity} requires the noise variable $\varepsilon$ to be neither sub-Gaussian nor sub-exponential. It does require, however, that the support of $\varepsilon$ is $\mathbb R$. Note also that whenever $\varepsilon$ is independent of $X$, we can choose the function $Q(X,e)$ to be independent of $X$, i.e. $Q(X,e) =Q(e)$. One simple example of a distribution that satisfies Assumption \ref{as: heterogeneity} is that of $\varepsilon = e + e^3$ with $e\sim N(0,1)$. A more complicated example is $\varepsilon = (e + \gamma_1 e^3)/(1 + \gamma_2 e^2)$, where $\gamma_1,\gamma_2>0$ are such that $9\gamma_1^2 + \gamma_2^2 <10\gamma_1\gamma_2$. Figure 1 presents plots of three probability density functions satisfying Assumption \ref{as: heterogeneity}. Interestingly, the third one is bi-modal, which emphasizes the fact that Assumption \ref{as: heterogeneity} allows for a wide variety of distributions. Finally, note that Assumption \ref{as: heterogeneity} holds with $r=0$ if the conditional distribution of $\varepsilon$ given $X$ is Gaussian.

\begin{figure}[tbp]
\includegraphics[width=10cm]{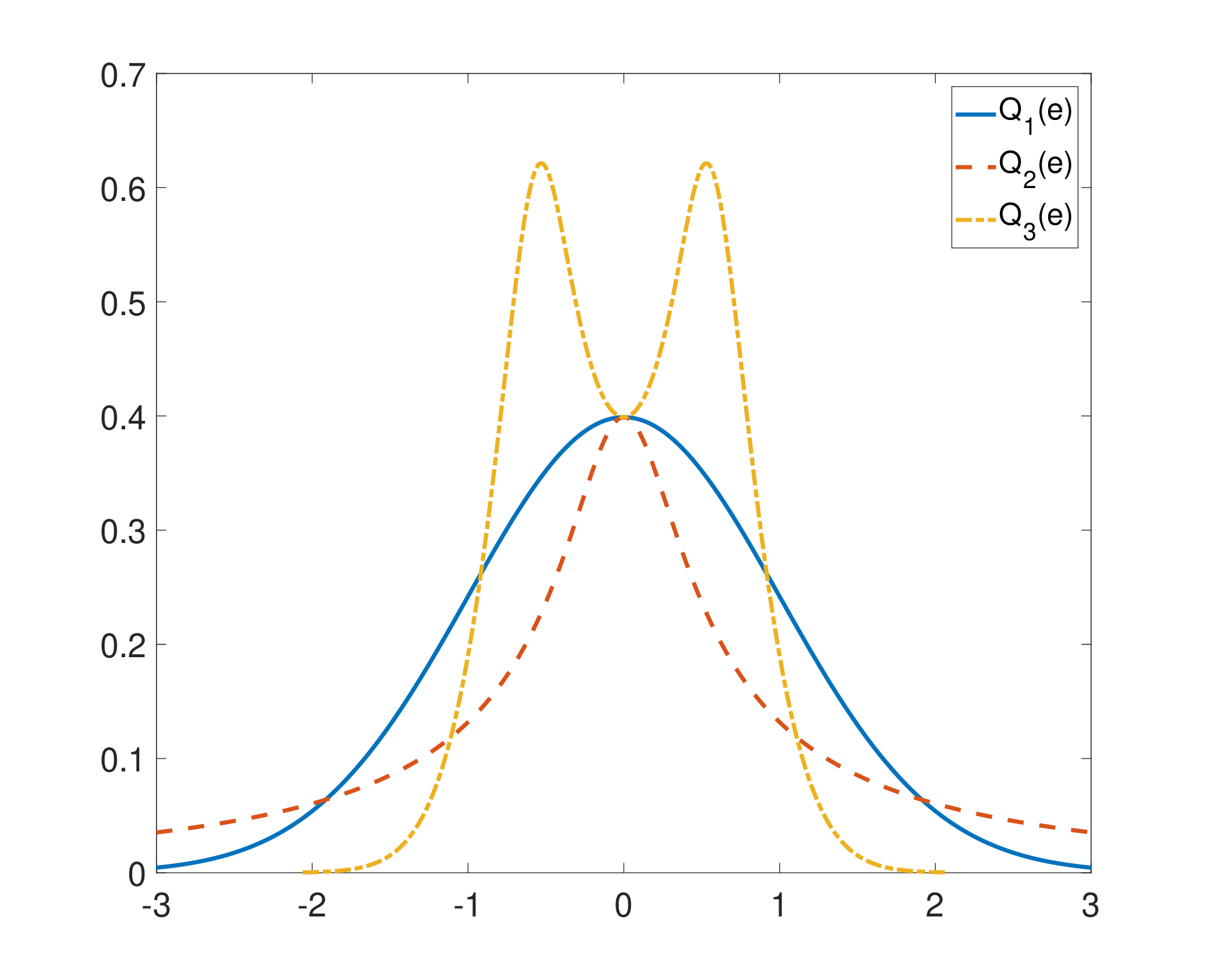}
\caption{The figure plots probability density functions of $\varepsilon = Q_j(e)$, $j=1,2,3$, where $e\sim N(0,1)$ and $Q_1(e) = e$, $Q_2(e) = e + e^3$, and $Q_3(e) = (e + e^3)/(1+2e^2)$. All three probability density functions are allowed by Assumption \ref{as: heterogeneity}.}
\label{fig:5}
\end{figure}


\begin{assumption}[Candidate set]
\label{as: candidate set}The candidate set $\Lambda _{n}$ takes the
following form: $\Lambda _{n}=\{C_{1}a^{l}\colon l=0,1,2,\dots ;\ a^{l}\geq
c_{1}/n\}$.
\end{assumption}

It is known from \cite{BRT09} that the optimal rate of convergence of the
Lasso estimator is achieved when $\lambda$ is of order $(\log p / n)^{1/2}$.
Since under Assumption \ref{as: growth condition}, we have $\log p = o(n)$,
it follows that our choice of the candidate set $\Lambda_n$ in Assumption %
\ref{as: candidate set} makes sure that there are some $\lambda$'s in the
candidate set $\Lambda_n$ that would yield the Lasso estimator with the
optimal rate of convergence in the prediction norm. Note also that
Assumption \ref{as: candidate set} allows for a rather large candidate set $%
\Lambda_n$ of values of $\lambda$; in particular, the largest value, $C_1$,
can be set arbitrarily large and the smallest value, $c_1/n$, converges to
zero rather fast. In fact, the only two conditions that we need from
Assumption \ref{as: candidate set} is that $\Lambda_n$ contains a ``good''
value of $\lambda$, say $\bar \lambda_0$, such that the subsample Lasso
estimators $\widehat \beta_{-k}(\bar \lambda_0)$ satisfy the bound 
\eqref{eq: preliminary lasso bound} in Lemma \ref{lem: 1} with probability $1 - C
n^{-c} $ and that $|\Lambda_n|\leq C\log n$, where $c$ and $C$ are some
constants. Thus, we could for example set $\Lambda_n = \{a^l\colon l =\dots,
-2,-1,0,1,2,\dots; \ a^{-l} \leq n^{C_1}, \ a^l \leq n^{C_1}\}$.

\begin{assumption}[Dataset partition]
\label{as: dataset partition}The dataset partition $\{I_k\}_{k=1}^K$ is such that for all $k=1,\dots ,K$, we have $n_{k}/n\geq c_{1}$, where $n_k = |I_k|$.
\end{assumption}

Assumption \ref{as: dataset partition} is mild and is typically imposed in
the literature on $K$-fold cross-validation. This assumption ensures that
the subsamples $I_k$ are balanced in the sample size.

\section{Main Results\label{sec: main results}}

Our first main result in this paper gives a non-asymptotic estimation error
bound for the cross-validated Lasso estimator $\widehat \beta(\widehat
\lambda) $ in the prediction norm.

\begin{theorem}[Prediction Norm Bound]
\label{thm: cross validation} Suppose that Assumptions \ref{as: covariates}
-- \ref{as: dataset partition} hold. Then for any $\alpha \in (0,1)$, 
\begin{equation*}
\Vert \widehat{\beta }(\widehat{\lambda })-\beta \Vert _{2,n}\leq \sqrt{%
\frac{Cs\log (p/\alpha )}{n}}\times \sqrt{\log (pn)+s^{-1}\log ^{r+1}n}
\end{equation*}%
with probability at least $1-\alpha -Cn^{-c}$,
where $c,C>0$ are constants depending only on $c_{1}$, $C_{1}$, $K$, $a$, $q$, and $r$.
\end{theorem}

\begin{remark}[Near-rate-optimality of cross-validated Lasso estimator in
prediction norm]
\label{rem: 1} The results in \cite{BRT09} imply that under the assumptions
of Theorem \ref{thm: cross validation}, setting $\lambda =\lambda ^{\ast
}=(C\log p/n)^{1/2}$ for sufficiently large constant $C$, which depends on
the distribution of $\varepsilon $, gives the Lasso estimator $\widehat{%
\beta }(\lambda ^{\ast })$ satisfying $\Vert \widehat{\beta }(\lambda ^{\ast
})-\beta \Vert _{2,n}=O_{P}((s\log p/n)^{1/2})$, and it follows from \cite%
{RT11} that this is the optimal rate of convergence (in the minimax sense)
for the estimators of $\beta $ in the model \eqref{eq: model}. Therefore,
Theorem \ref{thm: cross validation} implies that the cross-validated Lasso
estimator $\widehat{\beta }(\widehat{\lambda })$ has the fastest possible
rate of convergence in the prediction norm up to the small $(\log
(pn)+s^{-1}\log ^{r+1}n)^{1/2}$ factor. Note, however, that implementing the
cross-validated Lasso estimator does not require knowledge of the
distribution of $\varepsilon $, which makes this estimator attractive in
practice. In addition, simulation evidence suggests that $\widehat{\beta }(%
\widehat{\lambda })$ often outperforms $\widehat{\beta }(\lambda ^{\ast })$, which is one of the main reasons why cross-validation is typically recommended as a method to choose $\lambda 
$. The rate of convergence following from Theorem \ref{thm: cross validation}
is also very close to the oracle rate of convergence, $(s/n)^{1/2}$, that
could be achieved by the OLS estimator if we knew the set of covariates
having non-zero coefficients; see, for example, \cite{BCCK15}.\qed
\end{remark}

\begin{remark}[On the proof of Theorem \protect \ref{thm: cross validation}]
\label{rem: proof} One of the main steps in \cite{BRT09} is to show that outside
of the event 
\begin{equation}
\lambda <c\max_{1\leq j\leq p}\left \vert \frac{1}{n}\sum_{i=1}^{n}X_{ij}%
\varepsilon _{i}\right \vert ,  \label{eq: main event}
\end{equation}%
where $c>2$ is some constant, the Lasso estimator $\widehat{\beta }(\lambda
) $ satisfies the bound $\Vert \widehat{\beta }(\lambda )-\beta \Vert
_{2,n}\leq C\lambda \sqrt{s}$, where $C$ is a constant. Thus, to obtain the
Lasso estimator with a fast rate of convergence, it suffices to choose $%
\lambda $ such that it is small enough but the event \eqref{eq: main event}
holds with at most small probability. The choice $\lambda =\lambda ^{\ast }$
described in Remark \ref{rem: 1} satisfies these two conditions. The
difficulty with cross-validation, however, is that, as we demonstrate in
Section \ref{sec: simulations} via simulations, it typically yields a rather
small value of $\lambda $, so that the event \eqref{eq: main event} with $%
\lambda =\widehat{\lambda }$ holds with non-trivial (in fact, large)
probability even in large samples, and little is known about properties of
the Lasso estimator $\widehat{\beta }(\lambda )$ when the event 
\eqref{eq:
main event} does not hold, which is perhaps one of the main reasons why
there are only few results on the cross-validated Lasso estimator in the
literature. We therefore take a different approach. First, we use the fact
that $\widehat{\lambda }$ is the cross-validation choice of $\lambda $ to
derive bounds on $\Vert \widehat{\beta }_{-k}(\widehat{\lambda })-\beta
\Vert_2$ for the subsample Lasso estimators $\widehat{\beta }_{-k}(\widehat{%
\lambda })$ defined in \eqref{eq: subsample lasso}. Second, we use the
degrees of freedom formula of \cite%
{ZHT07} and \cite{TT12} to show that these estimators are sparse and to
derive bounds on $\Vert \widehat{\beta }_{-k}(\widehat{\lambda })-\beta
\Vert _{1}$ and $\Vert \widehat{\beta }_{-k}(\widehat{\lambda })-\beta \Vert
_{2,n}$. Third, we use the two point inequality stating that for all $b\in \mathbb{R}^{p}$ and $\lambda > 0$,
\begin{equation*}
\Vert \widehat{\beta }(\lambda )-b\Vert _{2,n}^{2}\leq \frac{1}{n}%
\sum_{i=1}^{n}(Y_{i}-X_{i}^{\prime }b)^{2}+\lambda \Vert b\Vert _{1}-\frac{1%
}{n}\sum_{i=1}^{n}(Y_{i}-X_{i}^{\prime }\widehat{\beta }(\lambda
))^{2}-\lambda \Vert \widehat{\beta }(\lambda )\Vert _{1},
\end{equation*}%
which can be found in \cite{G16}, with $\lambda = \widehat\lambda$ and $b=(K-1)^{-1}\sum_{k=1}^{K}(n-n_{k})%
\widehat{\beta }_{-k}(\widehat{\lambda })/n$, a convex combination of the
subsample Lasso estimators $\widehat{\beta }_{-k}(\widehat{\lambda })$, and
derive a bound for its right-hand side using the definition of estimators $%
\widehat{\beta }_{-k}(\widehat{\lambda })$ and bounds on $\Vert \widehat{%
\beta }_{-k}(\widehat{\lambda })-\beta \Vert_2 $ and $\Vert \widehat{\beta }%
_{-k}(\widehat{\lambda })-\beta \Vert _{1}$. Finally, we use the triangle
inequality to obtain a bound on $\Vert \widehat{\beta }(\widehat\lambda )-\beta
\Vert _{2,n}$ from the bounds on $\Vert \widehat{\beta }(\widehat\lambda )-b\Vert
_{2,n}$ and $\Vert \widehat{\beta }_{-k}(\widehat{\lambda })-\beta \Vert
_{2,n}$. The details of the proof can be found in Section \ref{sec: proofs estimation error}. 
\qed
\end{remark}



Next, in order to obtain bounds on $\| \widehat \beta(\widehat \lambda) -
\beta \|_1$ and $\| \widehat \beta(\widehat \lambda) - \beta \|_2$, we
derive a sparsity bound for $\widehat \beta(\widehat \lambda)$, that is, we
show that the estimator $\widehat \beta(\widehat \lambda)$ has relatively
few non-zero components, at least with high-probability. Even though our
sparsity bound is not immediately useful in applications itself, it will
help us to translate the result in the prediction norm in Theorem \ref{thm:
cross validation} into the result in $L^1$ and $L^2$ norms in Theorem \ref%
{thm: l1 and l2 bounds}.


\begin{theorem}[Sparsity Bound]
\label{thm: sparsity bound} Suppose that Assumptions \ref{as: covariates} -- %
\ref{as: dataset partition} hold. Then for any $\alpha \in (0,1)$, 
\begin{equation*}
\| \widehat \beta(\widehat \lambda)\|_0 \leq Cs\times \frac{(\log^2p)(\log
n)(\log(p n) + s^{-1}\log^{r+1} n)}{\alpha}
\end{equation*}
with probability at least $1 - \alpha - Cn^{-c}$, where $c,C>0$ are constants depending only on $c_1$, $C_1$, $K$, $%
a $, $q$, and $r$.
\end{theorem}

\begin{remark}[On the sparsity bound]
\label{rem: sparsity} \cite{BC13} showed that outside of the event %
\eqref{eq: main event}, the Lasso estimator $\widehat \beta(\lambda)$
satisfies the bound $\| \widehat \beta(\lambda)\|_0 \leq C s$, for some
constant $C$, so that the number of covariates that have been mistakenly
selected by the Lasso estimator is at most of the same order as the number
of non-zero coefficients in the original model \eqref{eq: model}. As
explained in Remark \ref{rem: proof}, however, cross-validation typically
yields a rather small value of $\lambda$, so that the event 
\eqref{eq: main
event} with $\lambda = \widehat \lambda$ holds with non-trivial (in fact,
large) probability even in large samples, and it is typically the case that
smaller values of $\lambda$ lead to the Lasso estimators $\widehat
\beta(\lambda)$ with a larger number of non-zero coefficients. We therefore
should not necessarily expect that the inequality $\| \widehat
\beta(\widehat \lambda) \|_0 \leq C s$ holds with large probability. In
fact, it is well-known (from simulations) in the literature that the
cross-validated Lasso estimator typically satisfies $\| \widehat
\beta(\widehat \lambda) \|_0 \gg s $. Our theorem, however, shows that even
though the event \eqref{eq: main event} with $\lambda = \widehat \lambda$
may hold with large probability, the number of non-zero components in the
cross-validated Lasso estimator $\widehat \beta(\widehat \lambda)$ may
exceed $C s$ only by the relatively small $(\log^2p)(\log n)(\log(p n) +
s^{-1}\log^{r+1}) $ factor. \qed
\end{remark}

With the help of Theorems \ref{thm: cross validation} and \ref{thm: sparsity
bound}, we immediately obtain the following bounds on the $L^1$ and $L^2$
norms of the estimation error of the cross-validated Lasso estimator, which
is our second main result in this paper.

\begin{theorem}[$L^{1}$ and $L^{2}$ Norm Bounds]
\label{thm: l1 and l2 bounds} Suppose that Assumptions \ref{as: covariates}
-- \ref{as: dataset partition} hold. Then for any $\alpha \in (0,1)$, 
\begin{equation*}
\Vert \widehat{\beta }(\widehat{\lambda })-\beta \Vert _{2}\leq \sqrt{\frac{%
Cs\log (p/\alpha )}{n}}\times \sqrt{\log (pn)+s^{-1}\log ^{r+1}n}
\end{equation*}%
and 
\begin{equation*}
\Vert \widehat{\beta }(\widehat{\lambda })-\beta \Vert _{1}\leq \sqrt{\frac{%
Cs^{2}\log (p/\alpha )}{n}}\times \sqrt{\frac{(\log ^{2}p)(\log n)(\log
(pn)+s^{-1}\log ^{r+1}n)^{2}}{\alpha}}
\end{equation*}%
with probability at least $1-\alpha - Cn^{-c}$,
where $c,C>0$ are constants depending only on $c_{1}$, $C_{1}$, $K$, $a$%
, $q$, and $r$.
\end{theorem}

\begin{remark}[Near-rate-optimality of cross-validated Lasso estimator in $%
L^{1}\ $and $L^{2}$ norms]
Like in Remark \ref{rem: 1}, the results in \cite{BRT09} imply that under
the assumptions of Theorem \ref{thm: l1 and l2 bounds}, setting $\lambda
=\lambda ^{\ast }=(C\log p/n)^{1/2}$ for sufficiently large constant $C$
gives the Lasso estimator $\widehat{\beta }(\lambda ^{\ast })$ satisfying $%
\Vert \widehat{\beta }(\lambda ^{\ast })-\beta \Vert _{2}=O_{P}((s\log
p/n)^{1/2})$ and $\Vert \widehat{\beta }(\lambda ^{\ast })-\beta \Vert
_{1}=O_{P}((s^{2}\log p/n)^{1/2})$, and one can use the methods from \cite%
{RT11} to show that these rates are optimal. Therefore, the cross-validated
Lasso estimator $\widehat{\beta }(\widehat{\lambda })$ has the fastest
possible rate of convergence both in $L^{1}$ and in $L^{2}$ norms, up to
small logarithmic factors.\qed
\end{remark}

\begin{remark}[On the case with Gaussian noise]
Recall that whenever the conditional distribution of $\varepsilon$ given $X$
is Gaussian, we can take $r = 0$ in Assumption \ref{as: heterogeneity}.
Thus, it follows from Theorems \ref{thm: cross validation} and \ref{thm: l1
and l2 bounds} that, in this case, we have 
\begin{equation*}
\| \widehat \beta(\widehat \lambda) - \beta \|_{2,n}\vee \| \widehat \beta(\widehat \lambda) - \beta \|_{2} \leq \sqrt{\frac{C s\log(p/\alpha)}{n}}\times \sqrt{\log(p n)}
\end{equation*}
with probability at least $1 - \alpha - Cn^{-c}$ for any $\alpha \in(0,1)$ and some constants $c,C > 0$. Theorems \ref{thm: sparsity bound} and \ref{thm: l1 and l2 bounds} can also
be used to obtain the sparsity and $L^1$ norm bounds in this case as well.
However, the sparsity and $L^1$ norm bounds here can be improved using
results in \cite{BZ18}. In particular, assuming that the conditional
distribution of $\varepsilon$ given $X$ is $N(0,\sigma^2)$ for some constant 
$\sigma>0$, it follows from
Theorem 4.3 in \cite{BZ18} that for any $\lambda > 0$, 
\begin{equation*}
\text{Var}(\| \widehat \beta(\lambda) \|_0 \mid X_1^n) \leq {\mathrm{E}}[\|
\widehat \beta(\lambda) \|_0 \mid X_1^n] \left( 3 + 4\log \left( \frac{e p}{{%
\mathrm{E}}[\| \widehat \beta(\lambda) \|_0 \mid X_1^n]} \right) \right).
\end{equation*}
Combining this result and the same arguments as those in the proofs of Theorems \ref{thm: sparsity bound} and %
\ref{thm: l1 and l2 bounds}, with Chebyshev's inequality replacing Markov's inequality in the proof of Theorem \ref{thm: sparsity bound}, we have 
\begin{equation*}
\| \widehat \beta(\widehat \lambda) \|_0 \leq C s \times \frac{(\log^2
p)\log(p n)}{\sqrt \alpha}
\end{equation*}
and 
\begin{equation*}
\| \widehat \beta(\widehat \lambda) - \beta \|_1 \leq \sqrt{\frac{C s^2
\log(p/\alpha)}{n}}\times \frac{(\log p)\log(p n)}{\alpha^{1/4}}
\end{equation*}
with probability at least $1 - \alpha - Cn^{-c}$. \qed
\end{remark}

The near-rate-optimality of the cross-validated Lasso estimator in Theorem \ref{thm: cross validation} may be viewed as an in-sample prediction property since the prediction norm 
$$
\|\widehat\beta - \beta\|_{2,n} = \left(\frac{1}{n}\sum_{i=1}^{n}(X_{i}^{\prime }\widehat\beta - X_i'\beta)^2\right)^{1/2}
$$ evaluates estimation errors with respect to the observed data $X_1,\dots,X_n$. In addition, we can define an out-of-sample prediction norm 
$$
\|\widehat\beta - \beta\|_{p,2,n} = \Big(\Ep\Big[(X'\widehat\beta - X'\beta)^2\mid(X_i,Y_i)_{i=1}^n\Big]\Big)^{1/2},
$$
where $X$ is independent of $X_1,\dots,X_n$. Using Theorems \ref{thm: sparsity bound} and \ref{thm: l1 and l2 bounds}, we immediately obtain the following corollary on the estimation error of the cross-validated Lasso estimator in the out-of-sample prediction norm:
\begin{corollary}[Out-of-Sample Prediction Norm Bounds]\label{cor: out of sample prediction}
Suppose that Assumptions \ref{as: covariates} -- \ref{as: dataset partition} hold. Then for any $\alpha \in (0,1)$, 
\begin{equation*}
\Vert \widehat{\beta }(\widehat{\lambda })-\beta \Vert _{p,2,n}\leq \sqrt{\frac{%
Cs\log (p/\alpha )}{n}}\times \sqrt{\log (pn)+s^{-1}\log ^{r+1}n}
\end{equation*}
with probability at least $1-\alpha - Cn^{-c}$,
where $c,C>0$ are constants depending only on $c_{1}$, $C_{1}$, $K$, $a$%
, $q$, and $r$.
\end{corollary}

\section{General Sparsity Bounds\label{sec: sparsity bound}}

As we mentioned in Remark \ref{rem: proof}, our analysis of the
(full-sample) cross-validated Lasso estimator $\widehat{\beta }(\widehat{%
\lambda })$ requires understanding sparsity of the sub-sample
cross-validated Lasso estimators $\widehat{\beta }_{-k}(\widehat{\lambda })$%
, that is, we need a sparsity bound showing that $\Vert \widehat{\beta }%
_{-k}(\widehat{\lambda })\Vert _{0}$, $k=1,\dots ,K$, are sufficiently
small, at least with high probability. Unfortunately, existing sparsity
bounds are not good enough for our purposes because, as we discussed in
Remark \ref{rem: sparsity}, they only apply outside of the event 
\eqref{eq:
main event} and this event holds with non-trivial (in fact, large)
probability if we set $\lambda =\widehat{\lambda }$. We therefore develop here two novel sparsity bounds. The crucial feature of our bounds is that they apply for all values of $\lambda $, both large and small, independently of whether \eqref{eq: main event}
holds or not. Roughly speaking, the first bound shows, under mild conditions, that the Lasso estimator $\widehat{\beta }(\lambda )$ has to be sparse, at least with large probability, whenever it has small estimation error in the $L^2$ norm. The second bound shows, under somewhat stronger conditions, that the Lasso estimator $\widehat{\beta }(\lambda )$ has
to be sparse, at least on average, whenever it has small estimation error in the prediction norm. Both bounds turn out useful in our analysis.

\begin{theorem}[Sparsity Bound via Estimation Error in $L^2$ Norm]\label{thm: sparsity bound via l2 norm}
Suppose that Assumption \ref{as: heterogeneity} holds and let $\bar C>0$ be some constant. Then for all $\lambda > 0$ and $t\geq 1$,
\begin{multline*}
\Pr\left(\|\hb(\lambda)\|_0 \leq Cts\log^2(p n)\left(  \log^r n + \frac{ n(\log^2 n)\|\hb(\lambda) - \beta\|_2^2}{s\log(p n)}  \right)\mid X_1^n\right)\\ \leq 1 - \frac{2}{ts\log(pn)} - \frac{2}{n},
\end{multline*}
on the event $\sup_{\delta\in \mathcal S^p(s)}\|\delta\|_{2,n} \leq \bar C$, where $C>0$ is a constant depending only on $c_1$, $C_1$, $\bar C$, and $r$.
\end{theorem}

\begin{theorem}[Sparsity Bound via Estimation Error in Prediction Norm]
\label{lem: sparsity bound low dimensional} 
Suppose that Assumption \ref{as: heterogeneity} holds and let $\bar c, \bar C>0$ be some constants. Then for all $\lambda > 0$,
\begin{equation*}
{\mathrm{E}}[\Vert \widehat{\beta }(\lambda )\Vert _{0}\mid
X_{1}^{n}]\leq s+C(\log p)(nR_{n}(\lambda )^{2}+\log ^{r}n)
\end{equation*}
on the event 
\begin{equation}\label{eq: sparse eigenvalue sparsity theorem 2}
\bar c\leq \inf_{\delta\in \mathcal S^p(J_n(\lambda))}\|\delta\|_{2,n}\quad\text{and}\quad \max_{1\leq j\leq p}\sqrt{\frac{1}{n}\sum_{i=1}^n X_{i j}^2} \leq \bar C,
\end{equation}
where 
$$
J_n(\lambda) = n^{1/2 + c_1/8}(\sqrt n R_n(\lambda) + 1), \ R_{n}(\lambda )={\mathrm{E}}[\Vert \widehat{\beta }(\lambda )-\beta \Vert _{2,n}\mid X_{1}^{n}],
$$
and $C>0$ is a constant depending only on $c_1$, $C_1$, $\bar c$,  $\bar C$, and $r$.
\end{theorem}




\section{Simulations\label{sec: simulations}}

In this section, we present results of our simulation experiments. The
purpose of the experiments is to investigate finite-sample properties of the
cross-validated Lasso estimator. In particular, we are interested in (i)
comparing the estimation error of the cross-validated Lasso estimator in
different norms to the Lasso estimator based on other choices of $\lambda $;
(ii) studying sparsity properties of the cross-validated Lasso estimator;
and (iii) estimating probability of the event \eqref{eq: main event} for $%
\lambda =\widehat{\lambda }$, the cross-validation choice of $\lambda $.

We consider two data generating processes (DGPs). In both DGPs, we simulate
the vector of covariates $X=(X_{1},\dots ,X_{p})^{\prime }$ from the
Gaussian distribution with mean zero and variance-covariance matrix given by 
$E[X_{j}X_{k}]=\rho ^{|j-k|}$ for all $j,k=1,\dots ,p$ with $\rho =0.5$ and $%
0.75$. Also, we set $\beta =(1,-1,2,-2,0_{1\times (p-4)})^{\prime }$. We
simulate $\varepsilon $ from the standard Gaussian distribution in DGP1 and
from the uniform distribution on $[-3,3]$ in DGP2. For both DGPs, we take $%
\varepsilon $ to be independent of $X$. Further, for each DGP, we consider
samples of size $n=100$ and $400$. For each DGP and each sample size, we
consider $p=40$, $100$, and $400$. To construct the candidate set $\Lambda
_{n}$ of values of the penalty parameter $\lambda $, we use Assumption \ref%
{as: candidate set} with $a=0.9$, $c_{1}=0.005$ and $C_{1}=500$. Thus, the
set $\Lambda _{n}$ contains values of $\lambda $ ranging from $0.0309$ to $%
500$ when $n=100$ and from $0.0071$ to $500$ when $n=400$, that is, the set $%
\Lambda _{n}$ is rather large in both cases. In all experiments, we use
5-fold cross-validation ($K=5$). We repeat each experiment $5000$ times.

As a comparison to the cross-validated Lasso estimator, we consider the
Lasso estimators with $\lambda $ chosen according to \cite{SZ13} and \cite%
{PB18}, i.e.,
\begin{equation*}
\lambda =n^{-1/2}\sigma \sqrt{2\log p}\text{ and }\lambda =n^{-1/2}\sigma 
\sqrt{2\log (p/s)}
\end{equation*}%
respectively. These Lasso estimators achieve the optimal convergence rate
under the prediction norm (see, e.g., \cite{SZ13} and \cite{PB18}).\ The
noise level $\sigma $ and the true sparsity $s$ typically have to be
estimated from the data but for simplicity we assume that both $\sigma $ and 
$s$ are known, so we set $\sigma =1$ and $s=4$ in DGP1, and $\sigma =\sqrt{3}
$ and $s=4$ in DGP2. In what follows, these Lasso estimators are denoted as
SZ-Lasso and B-Lasso estimators respectively, and the cross-validated Lasso
estimator is denoted as CV-Lasso.

Figure \ref{fig:1} contains simulation results for DGP1 with $n=100$, $p=40$
and $\rho =0.75$. The first four (that is, the top-left, top-right,
middle-left and middle-right) panels of Figure \ref{fig:1} present the mean
of the estimation error of the Lasso estimators in the prediction, $L^{2}$, $%
L^{1}$, and out-of-sample prediction norms, respectively. The out-of-sample
prediction norm is defined as $\left \Vert b \right \Vert _{p,2,n}=(\Ep[(X'b)^2])^{1/2}$ for all $b\in\mathbb R^p$. In these panels, the dashed line represents the
mean of estimation error of the Lasso estimator as a function of $\lambda $
(we perform the Lasso estimator for each value of $\lambda $ in the
candidate set $\Lambda _{n}$; we sort the values in $\Lambda _{n}$ from the
smallest to the largest, and put the order of $\lambda $ on the horizontal
axis; we only show the results for values of $\lambda $ up to order 25 as
these give the most meaningful comparisons). This estimator is denoted as $%
\lambda $-Lasso. The solid, dotted and dashed-dotted horizontal lines
represent the mean of the estimation error of CV-Lasso, SZ-Lasso, and B-Lasso, respectively. 

From the top four panels of Figure \ref{fig:1}, we see that estimation error
of CV-Lasso is only slightly above the minimum of the estimation error over
all possible values of $\lambda $ not only in the prediction and $L^{2}$
norms but also in the $L^{1}$ norm. In comparison, SZ-Lasso and B-Lasso tend
to have larger estimation error in all four norms.

The bottom-left and bottom-right panels of Figure \ref{fig:1} depict the
histograms for the numbers of non-zero coefficients of the CV-Lasso
estimator and B-Lasso estimator respectively. Overall, these panels suggest
that the CV-Lasso estimator tends to select too many covariates: the number
of selected covariates with large probability varies between $5$ and $30$
even though there are only 4 non-zero coefficients in the true model. The
B-Lasso estimator is more sparse than the CV-Lasso estimator: it selects
around $5$ to $15$ covariates with large probability.

Figure \ref{fig:2} includes the simulation results for DGP1 when $n=100$, $%
p=400$ and $\rho =0.75$. The estimation errors of the Lasso estimators are
inflated when $p$ is much bigger than the sample size. The estimation error
of CV-Lasso under the prediction norm is increased from  $0.4481$ to $0.7616$
 when $p$ is increased from $40$ to $400$, although it remains the best
compared with SZ-Lasso and B-Lasso estimators. Similar phenomena are
observed for the estimation error under the $L^{2}$ norm and the
out-of-sample prediction norm. On the other hand, the estimation error of
the CV-Lasso is slightly larger than the SZ-Lasso and B-Lasso under the $%
L^{1}$ norm. For the sparsity of the Lasso estimators, the CV-Lasso is much
less sparse than the B-Lasso: it selects around $5$ to $50$ covariates with
large probability while the B-Lasso only selects $8$ to $22$ covariates with
large probability.

For all other experiments, the simulation results on the mean of estimation
error of the Lasso estimators can be found in Table 1. For simplicity, we
only report the minimum over $\lambda \in \Lambda _{n}$ of mean of the
estimation error of $\lambda $-Lasso and the mean of the estimation error of
B-Lasso in Table 1. The results in Table 1 confirm findings in Figure \ref%
{fig:1} and Figure \ref{fig:2}: the mean of the estimation error of CV-Lasso
is close to the minimum mean of the estimation errors of the $\lambda $%
-Lasso estimators under both DGPs for all combinations of $n$, $p$ and $\rho 
$ considered in all three norms. Their difference becomes smaller when the
sample size $n$ increases. The mean of the estimation error of B-Lasso is
larger than that of CV-Lasso in cases when $p$ is relatively small or the
regressors $X$ have strong correlation, while the B-Lasso has smaller
estimation error when $p$ is much larger than $n$ and the regressors $X$ are
weakly correlated. When the correlations of the regressors $X$ become
stronger and the largest eigenvalue of ${\mathrm{E}}[XX^{\prime }]$ becomes
bigger, the mean of the estimation error of the CV-Lasso estimator is
slightly enlarged and is much less effected compared with the B-Lasso
estimator. For example, in DGP1 with $n=100$ and $p=40$, the mean of
estimation error of CV-Lasso estimator increases\ $5.39\%$ when\ $\rho $ is
changed from $0.5$ to $0.75$ (and the largest eigenvalue of ${\mathrm{E}}%
[XX^{\prime }]$ increases from $2.97$ to $6.64$), while the B-Lasso
estimator has a $28\%$ increase.

Table 2 reports model selection results for the cross-validated Lasso
estimator. More precisely, the table shows probabilities for the number of
non-zero coefficients of the cross-validated Lasso estimator hitting
different brackets. Overall, the results in Table 2 confirm findings in
Figure \ref{fig:1} and Figure \ref{fig:2}: the cross-validated Lasso
estimator tends to select too many covariates. The probability of selecting
larger models tends to increase with $p$ but decreases with $n$.

Table 3 provides information on the finite-sample distribution of the ratio
of the maximum score $\max_{1\leq j\leq
p}|n^{-1}\sum_{i=1}^{n}X_{ij}\varepsilon _{i}|$ over $\widehat{\lambda }$,
the cross-validation choice of $\lambda $. More precisely, the table shows
probabilities for this ratio hitting different brackets. From Table 3, we
see that this ratio is above 0.5 with large probability in all cases and in
particular this probability exceeds 99\% in most cases. Hence, 
\eqref{eq:
main event} with $\lambda =\widehat{\lambda }$ holds with large probability,
meaning that deriving the rate of convergence of the cross-validated Lasso
estimator requires new arguments since existing arguments only work for the
case when \eqref{eq:
main event} does not hold; see discussion in Remark \ref{rem: proof} above.

\section{Proofs for Section \protect \ref{sec: main results}\label{sec:
proofs estimation error}}

In this section, we prove Theorems \ref{thm: cross validation}, \ref{thm:
sparsity bound}, and \ref{thm: l1 and l2 bounds} and Corollary \ref{cor: out of sample prediction}. Since the proofs are long, we start with a sequence of preliminary lemmas in Subsection \ref{sec:
preliminary lemmas estimation error} and give the actual proofs of the
theorems and the corollary in Subsections \ref{sec: proof of theorem 4.1}, \ref{sec: proof of
theorem 4.2}, \ref{sec: proof of theorem 4.3}, and \ref{sec: out of sample prediction}, respectively.

For convenience, we use the following additional notation. For $k=1,\dots ,K$%
, we denote 
\begin{equation*}
\Vert \delta \Vert _{2,n,k}=\bigg( \frac{1}{n_{k}}\sum_{i\in
I_{k}}(X_{i}^{\prime }\delta )^{2}\bigg) ^{1/2}\  \text{ and }\  \Vert \delta
\Vert _{2,n,-k}=\bigg( \frac{1}{n-n_{k}}\sum_{i\notin I_{k}}(X_{i}^{\prime
}\delta )^{2}\bigg) ^{1/2}
\end{equation*}%
for all $\delta \in \mathbb{R}^{p}$. We use $c$ and $C$ to denote strictly positive constants
that can change from place to place but that can be chosen to depend only on 
$c_{1}$, $C_{1}$, $K$, $a$, $q$, and $r$. We use the notation $a_{n}\lesssim
b_{n}$ if $a_{n}\leq Cb_{n}$. Moreover, for $\delta \in \mathbb{R}^{p}$ and $%
M\subset \{1,\dots ,p\}$, we use $\delta _{M}$ to denote the vector in $%
\mathbb{R}^{|M|}$ consisting of all elements of $\delta $ corresponding to
indices in $M$.

\subsection{Preliminary Lemmas\label{sec: preliminary lemmas estimation
error}}

Here, we collect preliminary lemmas that help to prove Theorems \ref{thm:
cross validation}--\ref{thm: l1 and l2 bounds}.

\begin{lemma}
\label{lem: eigenvalue bound} Suppose that Assumptions \ref{as: covariates} and \ref{as: growth condition} are satisfied and denote $\ell_n = \sqrt{s n^{1+c_1/2}}\log(p n)$. Then
\begin{equation}\label{eq: sparse eigenvalues bound general}
\sup_{\theta \in \mathcal{S}^{p}(\ell _{n})}\left \vert \frac{1}{n}%
\sum_{i=1}^{n}(X_{i}^{\prime }\theta )^{2}-{\mathrm{E}}[(X^{\prime
}\theta )^{2}]\right \vert \leq Cn^{-c}
\end{equation}
with probability at least $1-Cn^{-c}$, where $c$, $C>0$ are some constants
depending only on $c_{1}$, $C_{1}$, and $q$.
\end{lemma}

\begin{proof}
In this proof, $c$ and $C$ are strictly positive constants that depend only on $c_{1}$, $C_{1}$, and $q$ but their values can change from place to place. By Jensen's
inequality and the definition of $M_{n}$ in \eqref{eq: Mn definition}, 
\begin{align}
K_{n} &:=\left( {\mathrm{E}}\left[ \max_{1\leq i\leq n}\max_{1\leq j\leq
p}|X_{ij}|^{2}\right] \right) ^{1/2}  \notag \\
&\leq \left( {\mathrm{E}}\left[ \max_{1\leq i\leq n}\max_{1\leq j\leq
p}|X_{ij}|^{q}\right] \right) ^{1/q}\leq n^{1/q}M_{n}.  \label{l00}
\end{align}
Therefore, given that $\ell_n \leq C n^{1-c}$ by Assumptions \ref{as: covariates}(a) and \ref{as: growth condition}, which implies $\log\ell_n \leq C\log n$, it follows that
$$
\delta _{n}:=K_{n}\sqrt{\ell _{n}\log p/n}\left( 1+(\log \ell _{n})(\log
n)^{1/2}\right) \leq C n^{-c}
$$
by Assumption \ref{as: growth condition}; here, Assumption \ref{as: covariates}(a) is used only to verify that $M_n \geq c$. Also, denoting $\ell_{n,0} = n^{2/q + c_1}M_n^2 s \log^3(p n)$,
$$
\sup_{\theta \in \mathcal S^p(\ell_n)}\Ep[(X'\theta)^2] \leq \frac{2(\ell_n + \ell_{n,0})}{\ell_{n,0}}\sup_{\theta\in\mathcal S^p(\ell_{n,0})}\Ep[(X'\theta)^2] \leq \frac{C(\ell_n + \ell_{n,0})}{ \ell_{n,0}}
$$
by Lemma 9 in \cite{BC11} and Assumption \ref{as: covariates}(b). Thus,
$$
\delta_n \sup_{\theta \in \mathcal S^p(\ell_n)}\left(\Ep[(X'\theta)^2]\right)^{1/2} \leq C n^{-c}
$$
by Assumption \ref{as: growth condition}. Therefore, it follows from Lemma \ref{lem: rv} that 
\begin{equation*}
{\mathrm{E}}\left[ \sup_{\theta \in \mathcal{S}^{p}(\ell _{n})}\left
\vert \frac{1}{n}\sum_{i=1}^{n}(X_{i}^{\prime }\theta )^{2}-{\mathrm{E}}%
[(X^{\prime }\theta )^{2}]\right \vert \right] \leq Cn^{-c}.
\end{equation*}%
The asserted claim follows from combining this bound and Markov's inequality.
\end{proof}

\begin{lemma}
\label{lem: 1} Under Assumptions \ref{as: covariates}--\ref{as: dataset partition}, there exists $\bar{\lambda}_{0}=\bar{\lambda}_{n,0}\in \Lambda
_{n}$, possibly depending on $n$, such that for all $k=1,\dots ,K$, we have $\|\hb_{-k}(\bar\lambda_0)\|_0\lesssim s$ and, in addition,
\begin{equation}
\Vert \widehat{\beta }_{-k}(\bar{\lambda}_{0})-\beta \Vert
_2^{2}\lesssim \frac{s\log (pn)}{n}\  \text{ and }\  \Vert \widehat{%
\beta }_{-k}(\bar{\lambda}_{0})-\beta \Vert _{1}^{2}\lesssim \frac{s^{2}\log
(pn)}{n}  \label{eq: preliminary lasso bound}
\end{equation}%
with probability at least $1-Cn^{-c}$.
\end{lemma}

\begin{remark}
The result in this lemma is essentially well-known but we provide a short
proof here for completeness.
\end{remark}

\begin{proof}
Let $T=\{j\in \{1,\dots ,p\} \colon \beta _{j}\neq 0\}$
and $T^{c}=\{1,\dots ,p\} \backslash T$. Fix $k=1,\dots ,K$ and denote 
\begin{equation*}
Z_{k}=\frac{1}{n-n_{k}}\sum_{i\notin I_{k}}X_{i}\varepsilon _{i}
\end{equation*}%
and 
\begin{equation*}
\kappa _{k}=\inf \left \{ \frac{\sqrt{s}\Vert \delta \Vert _{2,n,-k}}{\Vert
\delta _{T}\Vert _{1}}\colon \  \delta \in \mathbb{R}^{p},\  \Vert \delta
_{T^{c}}\Vert _{1}<3\Vert \delta _{T}\Vert _{1}\right \} .
\end{equation*}%
To prove the asserted claims, we will apply Theorem 1 in \cite{BC11} that shows
that for any $\lambda \in \Lambda _{n}$, on the event $\lambda \geq 4\Vert
Z_{k}\Vert _{\infty }$, we have 
\begin{equation}\label{eq: bc bound main}
\Vert \widehat{\beta }_{-k}(\lambda )-\beta \Vert _{2,n,-k}\leq \frac{%
3\lambda \sqrt{s}}{2\kappa _{k}}.
\end{equation}%
To use this bound, we show that there exist $c>0$, $C>0$, and $\bar{\lambda}%
_{0}=\bar{\lambda}_{n,0}\in \Lambda _{n}$, possibly depending on $n$, such
that 
\begin{equation}
P(\kappa _{k}<c)\leq Cn^{-c},\ P\left( \bar{\lambda}_{0}<4\Vert Z_{k}\Vert
_{\infty }\right) \leq Cn^{-c},\  \bar{\lambda}_{0}\lesssim \left( \frac{\log
(pn)}{n}\right) ^{1/2}.  \label{eq: to prove 1}
\end{equation}%
To prove the first claim in \eqref{eq: to prove 1}, note that 
\begin{equation}
1\lesssim \Vert \delta \Vert _{2,n,-k}\lesssim 1
\label{eq: restricted eigenvalues}
\end{equation}%
with probability at least $1-Cn^{-c}$ uniformly over all $\delta \in \mathbb{%
R}^{p}$ such that $\Vert \delta \Vert _{2}=1$ and $\Vert \delta
_{T^{c}}\Vert _{0}\leq s\log n$ by Lemma \ref{lem: eigenvalue bound} and
Assumptions \ref{as: covariates}, \ref{as: growth condition} and %
\ref{as: dataset partition}. Hence, the first claim in 
\eqref{eq:
to prove 1} follows from Lemma 10 in \cite{BC11} applied with $m$ there
equal to $s\log n$ here.

To prove the second and the third claims in \eqref{eq: to prove 1}, note
that we have
$
\max_{1\leq j\leq p}\sum_{i\notin I_{k}}{\mathrm{E}}[|X_{ij}\varepsilon
_{i}|^{2}]\lesssim n
$
by Assumptions \ref{as: covariates}(b) and \ref{as: heterogeneity}. Also, 
\begin{equation*}
\Big({\mathrm{E}}\Big[\max_{1\leq i\leq n}\max_{1\leq j\leq
p}|X_{ij}\varepsilon _{i}|^{2}\Big]\Big)^{1/2}\leq \Big({\mathrm{E}}\Big[%
\max_{1\leq i\leq n}\max_{1\leq j\leq p}|X_{ij}\varepsilon _{i}|^{q}\Big]%
\Big)^{1/q}\lesssim n^{1/q}M_{n}.
\end{equation*}%
Thus, by Lemma \ref{lem: maximal inequality} and Assumption \ref{as: growth
condition}, 
\begin{equation*}
{\mathrm{E}}\Big[(n-n_{k})\Vert Z_{k}\Vert _{\infty }\Big]\lesssim \sqrt{%
n\log p}+n^{1/q}M_{n}\log p\lesssim \sqrt{n\log p}.
\end{equation*}%
Hence, applying Lemma \ref{lem: maximal inequality 2} with $t=(n\log
n)^{1/2} $ and $Z$ there replaced by $(n-n_{k})\Vert Z_{k}\Vert _{\infty }$
here and noting that $nM_{n}^{q}/(n\log n)^{q/2}\leq Cn^{-c}$ by Assumption %
\ref{as: growth condition} implies that 
\begin{equation*}
\Vert Z_{k}\Vert _{\infty }\lesssim \left( \frac{\log (pn)}{n}\right) ^{1/2}
\end{equation*}%
with probability at least $1-Cn^{-c}$. Hence, noting that $\log ^{4}(pn)\leq
Cn$ by Assumptions \ref{as: covariates}(a) and \ref{as: growth condition}, it follows from Assumption %
\ref{as: candidate set} that there exists $\bar{\lambda}_{0}\in \Lambda _{n}$
such that the second and the third claims in \eqref{eq: to prove
1} hold. By \eqref{eq: bc bound main}, this $\bar\lambda_0$ satisfies the following bound:
\begin{equation}\label{eq: prediction norm classic lambda}
P\left(\|\widehat\beta_{-k}(\bar\lambda_0) - \beta\|_{2,n,-k}^2 > \frac{Cs\log(p n)}{n}\right)\leq C n^{-c}.
\end{equation}
Now, to prove the asserted claims, note that using 
\eqref{eq: to
prove 1} and \eqref{eq: restricted eigenvalues} and applying Theorem 2 in 
\cite{BC11} with $m=s\log n$ there shows that $\Vert \widehat{\beta }_{-k}(%
\bar{\lambda}_{0})\Vert _{0}\lesssim s$ with probability at least $1-Cn^{-c}$. Hence,
\begin{align*}
\Vert \widehat{\beta }_{-k}(\bar{\lambda}_{0})-\beta \Vert _{1}^{2}
&\lesssim s\Vert \widehat{\beta }_{-k}(\bar{\lambda}_{0})-\beta \Vert
_{2}^{2}  \notag \\
&\lesssim s\Vert \widehat{\beta }_{-k}(\bar{\lambda}_{0})-\beta \Vert
_{2,n,-k}^{2}\lesssim \frac{s^{2}\log (pn)}{n}  \label{l1}
\end{align*}%
again with probability at least $1-Cn^{-c}$, where the second inequality follows from \eqref{eq: restricted eigenvalues},
and the third one from \eqref{eq: prediction norm classic lambda}. This gives all asserted claims and completes the proof of the lemma.
\end{proof}

\begin{lemma}
\label{lem: 2} Under Assumptions \ref{as: covariates}--\ref{as: dataset partition}, we have
for all $k=1,\dots ,K$ that 
\begin{equation*}
\Vert \widehat{\beta }_{-k}(\bar{\lambda}_{0})-\beta \Vert
_{2,n,k}^{2}\lesssim \frac{s\log (pn)}{n}
\end{equation*}%
with probability $1-Cn^{-c}$ for $\bar{\lambda}_{0}$ defined in Lemma \ref%
{lem: 1}.
\end{lemma}
\begin{remark}
We thank one of the anonymous referees for suggesting the proof below.
The suggestion relaxes the condition $s^{2}/n=o(1)$ in our early proof to $%
s/n=o(1)$, up to some log factors.\qed
\end{remark}

\begin{proof}
Fix $k=1,\dots,K$ and denote $\widehat\beta = \widehat\beta_{-k}(\bar\lambda_0)$. By Lemma \ref{lem: 1}, $\|\widehat\beta\|_0 \lesssim s$ with probability at least $1 - C n^{-c}$. Hence, 
\begin{align*}
\| \widehat{\beta }-\beta \|_{2,n,k}^{2} 
&\lesssim ( \widehat{\beta }-\beta)'\Ep[XX']( \widehat{\beta }-\beta) + \|\hb-\beta\|_2^2 
\lesssim \|\widehat{\beta }-\beta\|_2^2\lesssim \frac{s\log(pn)}{n}
\end{align*}
with probability at least $1 - C n^{-c}$, where the first inequality follows from Lemma \ref{lem: eigenvalue bound} and Assumption \ref{as: dataset partition}, the second from Assumption \ref{as: covariates}(b), and the third from Lemma \ref{lem: 1}. The asserted claim follows.
\end{proof}

\begin{lemma}
\label{lem: 3} Under Assumptions \ref{as: covariates}--\ref{as: dataset partition}, we have for all $k=1,\dots ,K$ that 
\begin{equation*}
\Vert \widehat{\beta }_{-k}(\widehat{\lambda })-\beta \Vert
_{2,n,k}^{2}\lesssim \frac{s\log (pn)}{n} + \frac{\log^{r+1}n}{n}
\end{equation*}%
with probability at least $1-Cn^{-c}$.
\end{lemma}

\begin{proof}
By the definition of $\widehat{\lambda }$ in \eqref{eq: cross validation}, 
\begin{equation*}
\sum_{k=1}^{K}\sum_{i\in I_{k}}(Y_{i}-X_{i}^{\prime }\widehat{\beta }_{-k}(%
\widehat{\lambda }))^{2}\leq \sum_{k=1}^{K}\sum_{i\in
I_{k}}(Y_{i}-X_{i}^{\prime }\widehat{\beta }_{-k}(\bar{\lambda}_{0}))^{2}
\end{equation*}%
for $\bar{\lambda}_{0}$ defined in Lemma \ref{lem: 1}. Therefore, 
\begin{align*}
\sum_{k=1}^{K}n_{k}\Vert \widehat{\beta }_{-k}(\widehat{\lambda })-\beta
\Vert _{2,n,k}^{2}& \leq \sum_{k=1}^{K}n_{k}\Vert \widehat{\beta }_{-k}(\bar{%
\lambda}_{0})-\beta \Vert _{2,n,k}^{2} \\
& \quad +2\sum_{k=1}^{K}\sum_{i\in I_{k}}\varepsilon _{i}X_{i}^{\prime }(%
\widehat{\beta }_{-k}(\widehat{\lambda })-\widehat{\beta }_{-k}(\bar{\lambda}%
_{0})).
\end{align*}%
Further, for all $k=1,\dots ,K$, denote $D_{k}=\{(X_{i},Y_{i})_{i\notin I_{k}},(X_i)_{i\in I_k}\}$
and 
\begin{equation*}
Z_{k}=\max_{\lambda \in \Lambda _{n}}\left \vert \frac{\sum_{i\in
I_{k}}\varepsilon _{i}X_{i}^{\prime }(\widehat{\beta }_{-k}(\lambda )-%
\widehat{\beta }_{-k}(\bar{\lambda}_{0}))}{\sqrt{n_{k}}\Vert \widehat{\beta }%
_{-k}(\lambda )-\widehat{\beta }_{-k}(\bar{\lambda}_{0})\Vert _{2,n,k}}%
\right \vert .
\end{equation*}%
Then by Lemma \ref{lem: maximal inequality} and Assumptions \ref{as:
heterogeneity} and \ref{as: candidate set}, we have that $\Ep[Z_k\mid D_k] \lesssim \sqrt{\log\log n} + M_{n,k}\log\log n$, where
\begin{align*}
M_{n,k} &= \left( {\mathrm{E}}\left[
\max_{\lambda \in \Lambda _{n}}\max_{i\in I_{k}}\frac{(\varepsilon
_{i}X_{i}^{\prime }(\widehat{\beta }_{-k}(\lambda )-\widehat{\beta }_{-k}(%
\bar{\lambda}_{0})))^{2}}{n_{k}\Vert \widehat{\beta }_{-k}(\lambda )-%
\widehat{\beta }_{-k}(\bar{\lambda}_{0})\Vert _{2,n,k}^{2}}\mid D_{k}\right]
\right) ^{1/2} \\
&\leq \left( {\mathrm{E}}\left[\max_{\lambda \in \Lambda _{n}}\max_{i\in I_{k}}\frac{|\varepsilon
_{i}X_{i}^{\prime }(\widehat{\beta }_{-k}(\lambda )-\widehat{\beta }_{-k}(%
\bar{\lambda}_{0}))|^{u}}{n_{k}^{u/2}\Vert \widehat{\beta }_{-k}(\lambda )-%
\widehat{\beta }_{-k}(\bar{\lambda}_{0})\Vert _{2,n,k}^{u}}\mid D_{k}\right]
\right) ^{1/u} \\
& \leq \left( {\mathrm{E}}\left[\sum_{\lambda \in \Lambda _{n}}\sum_{i\in I_{k}}\frac{|\varepsilon
_{i}X_{i}^{\prime }(\widehat{\beta }_{-k}(\lambda )-\widehat{\beta }_{-k}(%
\bar{\lambda}_{0}))|^{u}}{n_{k}^{u/2}\Vert \widehat{\beta }_{-k}(\lambda )-%
\widehat{\beta }_{-k}(\bar{\lambda}_{0})\Vert _{2,n,k}^{u}}\mid D_{k}\right]
\right) ^{1/u}
\end{align*} 
for any $u\geq 2$. In turn, the last expression is bounded from above by
$$
C\left((u(1 + r))^{u(1 + r)/2}\log n\right)^{1/u}
$$
since (i) $\Ep[|\varepsilon_i|^u\mid D_k] \leq C^u\Ep[(1 + |e| + |e|^{r+1})^u] \leq C^u(u(1+r))^{u(1+r)/2}$ under Assumption \ref{as: heterogeneity} and (ii) for any sequence $(a_i)_{i\in I_k}$ in $\mathbb R$, we have 
$
\sum_{i\in I_k}|a_i|^u \leq (\sum_{i\in I_k}|a_i|^2)^{u/2}.
$
Using this bound with $u = 3$ (for example) gives $\Ep[Z_k\mid D_k] \lesssim \sqrt{\log n}$. In addition, using this bound with $u = \log n$, it follows from Lemma \ref{lem: maximal inequality 2} that
$$
\Pr\Big(Z_k > 2\Ep[Z_k\mid D_k] + C\sqrt{\log^{r+1} n}\mid D_k\Big)\leq Cn^{-c}.
$$
Hence, $Z_k \lesssim \sqrt{\log^{r+1} n}$ with probability at least $1-Cn^{-c}$, and so, with the same probability,
\begin{equation*}
\left \vert \sum_{i\in I_{k}}\varepsilon _{i}X_{i}^{\prime }(\widehat{\beta }%
_{-k}(\widehat{\lambda })-\widehat{\beta }_{-k}(\bar{\lambda}_{0}))\right
\vert \lesssim \sqrt{n\log^{r+1} n}\Vert \widehat{\beta }_{-k}(\widehat{%
\lambda })-\widehat{\beta }_{-k}(\bar{\lambda}_{0})\Vert _{2,n,k}.
\end{equation*}%
Therefore, since $n_{k}/n\geq c_{1}$ by Assumption \ref{as: dataset
partition}, we have with the same probability that 
\begin{align*}
\sum_{k=1}^{K}\Vert \widehat{\beta }_{-k}(\widehat{\lambda })-\beta \Vert
_{2,n,k}^{2}& \lesssim \sum_{k=1}^{K}\Vert \widehat{\beta }_{-k}(\bar{\lambda%
}_{0})-\beta \Vert _{2,n,k}^{2} \\
& \quad +\sqrt{\frac{\log^{r+1} n}{n}}\sum_{k=1}^{K}\Vert \widehat{\beta }%
_{-k}(\widehat{\lambda })-\widehat{\beta }_{-k}(\bar{\lambda}_{0})\Vert
_{2,n,k},
\end{align*}%
and thus, by the triangle inequality, 
\begin{align*}
\Vert \widehat{\beta }_{-\widehat{k}}(\widehat{\lambda })-\widehat{\beta }_{-%
\widehat{k}}(\bar{\lambda}_{0})\Vert _{2,n,\widehat{k}}^{2}& \lesssim
\sum_{k=1}^{K}\Vert \widehat{\beta }_{-k}(\bar{\lambda}_{0})-\beta \Vert
_{2,n,k}^{2} \\
& \quad +\sqrt{\frac{\log^{r+1} n}{n}}\Vert \widehat{\beta }_{-\widehat{k}}(%
\widehat{\lambda })-\widehat{\beta }_{-\widehat{k}}(\bar{\lambda}_{0})\Vert
_{2,n,\widehat{k}},
\end{align*}%
where $\widehat{k}$ is a value of $k=1,\dots ,K$ that maximizes $\Vert \widehat{\beta 
}_{-k}(\widehat{\lambda })-\widehat{\beta }_{-k}(\bar{\lambda}_{0})\Vert
_{2,n,k}$. Therefore, by Lemma \ref{lem: 2},
\begin{equation*}
\Vert \widehat{\beta }_{-\widehat{k}}(\widehat{\lambda })-\widehat{\beta }_{-%
\widehat{k}}(\bar{\lambda}_{0})\Vert _{2,n,\widehat{k}}^{2}\lesssim \frac{%
s\log (pn)}{n}+\sqrt{\frac{\log^{r+1} n}{n}}\Vert \widehat{\beta }_{-\widehat{k%
}}(\widehat{\lambda })-\widehat{\beta }_{-\widehat{k}}(\bar{\lambda}%
_{0})\Vert _{2,n,\widehat{k}},
\end{equation*}%
and thus, for all $k=1,\dots ,K$, 
\begin{equation*}
\Vert \widehat{\beta }_{-k}(\widehat{\lambda })-\widehat{\beta }_{-k}(\bar{%
\lambda}_{0})\Vert _{2,n,k}^{2}\leq \Vert \widehat{\beta }_{-\widehat{k}}(%
\widehat{\lambda })-\widehat{\beta }_{-\widehat{k}}(\bar{\lambda}_{0})\Vert
_{2,n,\widehat k}^{2}\lesssim \frac{s\log (pn)}{n} + \frac{\log^{r+1}n}{n},
\end{equation*}%
again with probability at least $1-Cn^{-c}$. The asserted claim now follows from combining this bound with the triangle
inequality and Lemma \ref{lem: 2}. This completes the proof of the lemma.
\end{proof}

\begin{lemma}
\label{lem: 4} Under Assumptions \ref{as: covariates} -- \ref{as: dataset
partition}, we have for all $k=1,\dots ,K$ that 
\begin{equation*}
\Vert \widehat{\beta }_{-k}(\widehat{\lambda })-\beta \Vert _{2}^{2}\lesssim 
\frac{s\log (pn)}{n} + \frac{\log^{r+1}n}{n}
\end{equation*}%
with probability at least $1-Cn^{-c}$.
\end{lemma}

\begin{proof}
Fix $k=1,\dots ,K$. For $\lambda \in \Lambda _{n}$, let $\delta _{\lambda }=(\widehat{\beta }_{-k}(\lambda )-\beta )/\Vert \widehat{\beta }_{-k}(\lambda
)-\beta \Vert _{2}$ and observe that conditional on $(X_{i},Y_{i})_{i\notin I_{k}}$, $(\delta _{\lambda })_{\lambda \in \Lambda _{n}}$ is non-stochastic. Hence, by Lemma \ref{lem: lasso continuity}, Assumptions \ref{as: covariates}(a) and \ref{as: dataset partition} and Chebyshev's inequality applied conditional on $(X_i,Y_i)_{i\notin I_k}$, for any $\lambda\in\Lambda_n$,
$
(n_k)^{-1}\sum_{i\in I_k}(X_i'\delta_\lambda)^2 \geq c
$
with probability at least $1 - C n^{-c}$, and so
$
\| \hb_{-k}(\lambda) - \beta \|_2^2 \leq C\| \hb_{-k}(\lambda) - \beta \|_{2,n,k}^2
$
with the same probability. Therefore, by Assumption \ref{as: candidate set} and the union bound,
$
\| \hb_{-k}(\widehat\lambda) - \beta \|_2^2 \leq C\| \hb_{-k}(\widehat\lambda) - \beta \|_{2,n,k}^2
$
with probability at least $1 - C n^{-c}$. The asserted claim follows from combining this inequality and Lemma \ref{lem: 3}. 
\end{proof}


\begin{lemma}
\label{lem: 4.51} Fix $k=1,\dots ,K$ and denote 
\begin{equation}
\Lambda _{n,k}(X_{1}^{n},T)=\left \{ \lambda \in \Lambda _{n}\colon {\mathrm{%
E}}[\Vert \widehat{\beta }_{-k}(\lambda )-\beta \Vert _{2,n,-k}\mid
X_{1}^{n}]\leq T\right \} ,\quad T>0.  \label{eq: lambda set definition}
\end{equation}%
Then under Assumptions \ref{as: covariates} -- \ref{as: dataset partition},
we have that $\widehat{\lambda }\in \Lambda _{n,k}(X_{1}^{n},T_{n})$ with
probability at least $1-Cn^{-c}$, where 
\begin{equation}\label{eq: definition of tn relevant risk}
T_{n}=C\left(\frac{s\log (pn)}{n} + \frac{\log^{r+1}n}{n}\right)^{1/2}.
\end{equation}
\end{lemma}

\begin{proof}
Fix $k=1,\dots ,K$ and note that by Assumption \ref{as: covariates}(b) and Lemma \ref{lem: eigenvalue bound}, 
$
\sup_{\delta\in\mathcal S^p(s)}\|\delta\|_{2,n}\leq C
$ 
with probability at least $1 - C n^{-c}$. Hence, by Lemma \ref{lem: 4}, Theorem \ref{thm: sparsity bound via l2 norm}, Assumption \ref{as: candidate set}, and the union bound,
$
\| \hb_{-k}(\widehat\lambda) \|_0 \leq n^{c_1/4}s\log^2(p n)
$
with probability at least $1 - C n^{-c}$. Further, by Assumption \ref{as: growth condition},
$
n^{c_1/4} s\log^2(p n) \leq \sqrt{s n^{1 + c_1/2}}\log(p n)
$
for all $n\geq n_0$ with $n_0$ depending only on $c_1$ and $C_1$, and so, by Assumption \ref{as: covariates}(b) and Lemma \ref{lem: eigenvalue bound},
$
\| \hb_{-k}(\widehat\lambda) - \beta \|_{2,n,-k}^2 \lesssim \| \hb_{-k}(\widehat\lambda) - \beta \|_2^2
$
with probability at least $1 - C n^{-c}$. Combining this bound with Lemma \ref{lem: 4} now gives
\begin{equation}\label{eq: lemma 86 almost there}
\Pr\left(\| \hb_{-k}(\widehat\lambda) - \beta \|_{2,n,-k}^2 > C\left(\frac{s\log(p n)}{n} + \frac{\log^{r+1}n}{n}\right)\right) \leq C n^{-c}.
\end{equation}
Now, 
\begin{align*}
& \Pr \left( \widehat{\lambda }\notin \Lambda _{n,k}(X_{1}^{n},T_{n})\right) \leq \Pr \left( \Vert \widehat{\beta }_{-k}(\widehat{\lambda }%
)-\beta \Vert _{2,n,-k}>T_{n}/2\right) \\
& \quad +\Pr \left( \max_{\lambda \in \Lambda _{n}}\left \vert \Vert 
\widehat{\beta }_{-k}(\lambda )-\beta \Vert _{2,n,-k}-{\mathrm{E}}[\Vert 
\widehat{\beta }_{-k}(\lambda )-\beta \Vert _{2,n,-k}\mid X_{1}^{n}]\right
\vert >T_{n}/2\right),
\end{align*}
where the first and the second terms on the right-hand side are at most $Cn^{-c}$ by \eqref{eq: lemma 86 almost there} and Lemma \ref{lem: 0.7} applied with $\kappa = \log n$, respectively, as long the constant $C$ in the definition of $T_n$ is large enough. The asserted claim follows.
\end{proof}

\begin{lemma}
\label{lem: chatterjee} For all $\lambda \in \Lambda _{n}$ and $b\in \mathbb{%
R}^{p}$, we have 
\begin{equation*}
\Vert \widehat{\beta }(\lambda )-b\Vert _{2,n}^{2}\leq \frac{1}{n}%
\sum_{i=1}^{n}(Y_{i}-X_{i}^{\prime }b)^{2}+\lambda \Vert b\Vert _{1}-\frac{1%
}{n}\sum_{i=1}^{n}(Y_{i}-X_{i}^{\prime }\widehat{\beta }(\lambda
))^{2}-\lambda \Vert \widehat{\beta }(\lambda )\Vert _{1}.
\end{equation*}
\end{lemma}

\begin{proof}
The result in this lemma is sometimes referred to as the two point
inequality; see Section 2.4 in \cite{G16}, where the proof is also provided. 
\end{proof}

\subsection{Proof of Theorem \protect \ref{thm: cross validation}\label{sec:
proof of theorem 4.1}}
Throughout the proof, we can assume that $\alpha\in [1/n, e^{-2}]$ since the results for $\alpha > e^{-2}$ and $\alpha < 1/n$ follow from the cases $\alpha = e^{-2}$ and $\alpha = 1/n$, respectively, with suitably increased constant $C$. We proceed in three steps. In the first step, for any given $\lambda > 0$, we use Lemma \ref{lem: chatterjee} to provide an upper bound on the conditional median of $n\Vert \widehat{\beta }(\lambda )-\beta \Vert
_{2,n}^{2}$ given $X_{1}^{n}$ via some functionals of subsample estimators $\hb_{-k}(\lambda)$. In the second step, we derive bounds on these functionals for relevant values of $\lambda$ with the help of Theorem \ref{lem: sparsity bound low dimensional}. In the third step, we use Lemma \ref{lem: 4.51} to show that $\widehat\lambda$ belongs to the relevant set with high probability and Lemma \ref{lem: 0.7} to replace conditional medians by conditional expectations and complete the proof.

\medskip

\noindent \textbf{Step 1.} For any random variable $Z$ and any number $%
\alpha $, let $Q_{\alpha }(Z\mid X_{1}^{n})$ denote the $\alpha $th quantile
of the conditional distribution of $Z$ given $X_{1}^{n}$. In this step, we
show that for any $\lambda >0$, 
\begin{align}
& Q_{1/2}(n\Vert \widehat{\beta }(\lambda )-\beta \Vert _{2,n}^{2}\mid
X_{1}^{n})  \notag \\
& \quad \lesssim \sum_{k=1}^{K}Q_{1-1/(16K)}\left( \sum_{i\notin
I_{k}}(X_{i}^{\prime }(\widehat{\beta }_{-k}(\lambda )-\beta ))^{2}\mid
X_{1}^{n}\right)  \notag \\
& \qquad +\sum_{k=1}^{K}Q_{1-1/(16K)}\left( \sum_{i\in I_{k}}(X_{i}^{\prime }(%
\widehat{\beta }_{-k}(\lambda )-\beta ))^{2}\mid X_{1}^{n}\right)  \notag \\
& \qquad +\sum_{k=1}^{K}Q_{1-1/(16K)}\left( \left \vert \sum_{i\notin
I_{k}}\varepsilon _{i}X_{i}^{\prime }(\widehat{\beta }_{-k}(\lambda )-\beta
)\right \vert \mid X_{1}^{n}\right)  \notag \\
& \qquad +\sum_{k=1}^{K}Q_{1-1/(16K)}\left( \left \vert \sum_{i\in
I_{k}}\varepsilon _{i}X_{i}^{\prime }(\widehat{\beta }_{-k}(\lambda )-\beta
)\right \vert \mid X_{1}^{n}\right) .  \label{eq: step 1 statement}
\end{align}%
To do so, fix any $\lambda >0$ and denote 
\begin{equation}
b(\lambda )=\frac{1}{K-1}\sum_{k=1}^{K}\frac{n-n_{k}}{n}\widehat{\beta }%
_{-k}(\lambda ).  \label{eq: def b}
\end{equation}%
Then 
\begin{align*}
& \sum_{i=1}^{n}(Y_{i}-X_{i}^{\prime }\widehat{\beta }(\lambda
))^{2}+n\lambda \Vert \widehat{\beta }(\lambda )\Vert _{1} \\
& \quad =\frac{1}{K-1}\sum_{k=1}^{K}\left( \sum_{i\notin
I_{k}}(Y_{i}-X_{i}^{\prime }\widehat{\beta }(\lambda ))^{2}+(n-n_{k})\lambda
\Vert \widehat{\beta }(\lambda )\Vert _{1}\right) \\
& \quad \geq \frac{1}{K-1}\sum_{k=1}^{K}\left( \sum_{i\notin
I_{k}}(Y_{i}-X_{i}^{\prime }\widehat{\beta }_{-k}(\lambda
))^{2}+(n-n_{k})\lambda \Vert \widehat{\beta }_{-k}(\lambda )\Vert
_{1}\right) \\
& \quad \geq \frac{1}{K-1}\sum_{k=1}^{K}\sum_{i\notin
I_{k}}(Y_{i}-X_{i}^{\prime }\widehat{\beta }_{-k}(\lambda ))^{2}+n\lambda
\Vert b(\lambda )\Vert _{1},
\end{align*}%
where the second line follows from the definition of $\widehat{\beta }%
_{-k}(\lambda )$'s and the third one from the triangle inequality. Also, 
\begin{align*}
& \frac{1}{K-1}\sum_{k=1}^{K}\sum_{i\notin I_{k}}(Y_{i}-X_{i}^{\prime }%
\widehat{\beta }_{-k}(\lambda ))^{2} \\
& \geq \frac{1}{K-1}\sum_{k=1}^{K}\sum_{i\notin I_{k}}\left(
(Y_{i}-X_{i}^{\prime }b(\lambda ))^{2}+2(Y_{i}-X_{i}^{\prime }b(\lambda
))(X_{i}^{\prime }b(\lambda )-X_{i}^{\prime }\widehat{\beta }_{-k}(\lambda
))\right) \\
& =\sum_{i=1}^{n}(Y_{i}-X_{i}^{\prime }b(\lambda ))^{2}+\frac{2}{K-1}%
\sum_{k=1}^{K}\sum_{i\notin I_{k}}(Y_{i}-X_{i}^{\prime }b(\lambda
))(X_{i}^{\prime }b(\lambda )-X_{i}^{\prime }\widehat{\beta }_{-k}(\lambda
)).
\end{align*}%
Thus, by Lemma \ref{lem: chatterjee}, 
\begin{equation*}
n\Vert \widehat{\beta }(\lambda )-b(\lambda )\Vert _{2,n}^{2}\leq \frac{2}{%
K-1}\sum_{k=1}^{K}\sum_{i\notin I_{k}}(Y_{i}-X_{i}^{\prime }b(\lambda
))(X_{i}^{\prime }\widehat{\beta }_{-k}(\lambda )-X_{i}^{\prime }b(\lambda
)).
\end{equation*}%
Substituting here $Y_{i}=X_{i}^{\prime }\beta +\varepsilon _{i}$, $i=1,\dots
,n$, and the definition of $b(\lambda )$ in \eqref{eq: def b} and using the
triangle inequality gives 
\begin{align}
n\Vert \widehat{\beta }(\lambda )-\beta \Vert _{2,n}^{2}& \lesssim n\Vert 
\widehat{\beta }(\lambda )-b(\lambda )\Vert _{2,n}^{2}+n\Vert b(\lambda
)-\beta \Vert _{2,n}^{2}  \notag \\
& \lesssim \sum_{k=1}^{K}\sum_{i\notin I_{k}}(X_{i}^{\prime }(\widehat{\beta 
}_{-k}(\lambda )-\beta ))^{2}+\sum_{k=1}^{K}\sum_{i\in I_{k}}(X_{i}^{\prime
}(\widehat{\beta }_{-k}(\lambda )-\beta ))^{2}  \notag \\
& +\sum_{k=1}^{K}\left \vert \sum_{i\notin I_{k}}\varepsilon
_{i}X_{i}^{\prime }(\widehat{\beta }_{-k}(\lambda )-\beta )\right \vert
+\sum_{k=1}^{K}\left \vert \sum_{i\in I_{k}}\varepsilon _{i}X_{i}^{\prime }(%
\widehat{\beta }_{-k}(\lambda )-\beta )\right \vert .
\label{eq: step 1 intermediate}
\end{align}%
The claim of this step, inequality \eqref{eq: step 1 statement}, follows
from \eqref{eq: step 1 intermediate} and Lemma \ref{lem: quantile properties}%
.

\medskip

\noindent \textbf{Step 2.} Denote 
\begin{equation}
\Lambda _{n}(X_{1}^{n},T)=\cap _{k=1}^{K}\Lambda _{n,k}(X_{1}^{n},T),\quad
T>0,  \label{l5}
\end{equation}%
for $\Lambda _{n,k}(X_{1}^{n},T)$ defined in 
\eqref{eq: lambda set
definition} of Lemma \ref{lem: 4.51}. In this step, we show that 
\begin{multline}
\Pr \Big(\max_{\lambda \in \Lambda _{n}(X_{1}^{n},T_{n})}Q_{1/2}(n\Vert 
\widehat{\beta }(\lambda )-\beta \Vert _{2,n}^{2}\mid X_{1}^{n})
\label{eq: step 2 statement} \\
>C(\log p)(s\log (pn)+\log ^{r+1}n)\Big)\leq Cn^{-c}
\end{multline}%
for $T_{n}$ defined in \eqref{eq: definition of tn relevant risk} of Lemma \ref{lem: 4.51}.

To do so, we apply the result in Step 1 and bound all terms on the
right-hand side of \eqref{eq: step 1 statement} in turn. To start, fix $%
k=1,\dots ,K$. Then for any $\lambda \in \Lambda _{n}(X_{1}^{n},T_{n})$, 
\begin{align}
& Q_{1-1/(16K)}\bigg( \sum_{i\notin I_{k}}(X_{i}^{\prime }(\widehat{\beta }%
_{-k}(\lambda )-\beta ))^{2}\mid X_{1}^{n}\bigg)  \notag \\
& \quad  \leq nQ_{1-1/(16K)}\Big(\Vert \widehat{\beta }_{-k}(\lambda )-\lambda
\Vert _{2,n,-k}\mid X_{1}^{n}\Big)^{2}  \notag \\
& \quad \leq 16Kn\Big({\mathrm{E}}[\Vert \widehat{\beta }_{-k}(\lambda
)-\beta \Vert _{2,n,-k}\mid X_{1}^{n}]\Big)^{2}\lesssim
s\log (pn) + \log^{r+1}n,  \label{eq: step 2 - 1}
\end{align}%
where the third line follows from Markov's inequality and the definition of $T_n$.

Next, since Assumption \ref{as: covariates}(a) implies that $\Ep[(X'\delta)^2]\geq c$ for all $\delta\in\mathcal S^p(n+s)$, it follows from Lemma \ref{lem: eigenvalue bound} and Assumptions \ref{as: covariates} and \ref{as: growth condition} that
$$
c\leq \inf_{\delta\in \mathcal S^p(n^{1/2 + c_1/8}(\sqrt n T_n + 1))}\|\delta\|_{2,n,-k}\quad\text{and}\quad \max_{1\leq j\leq p}\sqrt{\frac{1}{n-n_k}\sum_{i\notin I_k} X_{i j}^2} \leq C
$$
with probability at least $1 - C n^{-c}$. Thus, by Theorem \ref{lem: sparsity bound low dimensional} and the union bound,
\begin{align}
\max_{\lambda \in \Lambda _{n}(X_{1}^{n},T_{n})}{\mathrm{E}}[\Vert \widehat{%
\beta }_{-k}(\lambda )\Vert _{0}\mid X_{1}^{n}]& \lesssim s+(\log
p)(nT_{n}^{2}+\log ^{r}n) \nonumber\\
& \lesssim (\log p)(s\log (pn)+\log ^{r+1}n)\label{eq: getting there sparsity bound 1}
\end{align}%
with probability at least $1-Cn^{-c}$. Thus, by Markov's inequality, 
Lemma \ref{lem: eigenvalue bound}, and Assumptions \ref{as: covariates}, \ref{as: growth condition}, and \ref{as: dataset partition}, 
\begin{align}
& \max_{\lambda \in \Lambda _{n}(X_{1}^{n},T_{n})}Q_{1-1/(16K)}\Big(%
\sum_{i\in I_{k}}(X_{i}^{\prime }(\widehat{\beta }_{-k}(\lambda )-\beta
))^{2}\mid X_{1}^{n}\Big)  \notag \\
& \lesssim \max_{\lambda \in \Lambda _{n}(X_{1}^{n},T_{n})}Q_{1-1/(32K)}\Big(%
n\Vert \widehat{\beta }_{-k}(\lambda )-\beta )\Vert _{2,n,-k}^{2}\mid
X_{1}^{n}\Big)\nonumber \\
& \lesssim s\log (pn) + \log^{r+1}n,  \label{eq: step 2 - 2}
\end{align}%
with probability at least $1 - C n^{-c}$, where the last inequality follows from the same argument as that in  \eqref{eq: step 2 - 1}.

Next, by Markov's inequality and the definition of $M_n$ in \eqref{eq: Mn definition}, 
$
\sum_{i\notin I_k}\|X_i\|_{\infty}^q \leq n^{1 + c_1}M_n^q
$ 
with probability at least $1 - C n^{-c}$. Hence, by Lemma \ref{lem: maximal inequality} and Assumptions \ref{as: growth condition} and \ref{as: heterogeneity},
$$
\Ep\left[\left\| \sum_{i\notin I_k}\varepsilon_i X_i \right\|_{\infty} | X_1^n\right] \lesssim \sqrt{n\log p} + n^{(1 + c_1)/q}M_n \log p\lesssim \sqrt{n\log p}
$$
with probability at least $1 - C n^{-c}$. Therefore, by Proposition A.1.6 in \cite{VW96},
\begin{align*}
\Ep\left[\left\| \sum_{i\notin I_k}\varepsilon_i X_i \right\|_{\infty}^4 | X_1^n\right]
 &\lesssim \left(\Ep\left[\left\| \sum_{i\notin I_k}\varepsilon_i X_i \right\|_{\infty} | X_1^n\right]\right)^4 + \Ep\left[ \max_{i\notin I_k}\|\varepsilon_i X_i\|_{\infty}^4 | X_1^n\right]\\
 &\lesssim \Big(\sqrt{n\log p} + n^{(1 + c_1)/q}M_n\Big)^4 \lesssim (n\log p)^2
\end{align*}
with probability at least $1 - C n^{-c}$. Thus, proceeding as in the proof of Theorem \ref{lem: sparsity bound low dimensional}, getting from \eqref{eq: thm 52 beginning} to \eqref{eq: thm 52 end}, with $\psi_i$'s replaced by $\varepsilon_i$'s, we obtain 
\begin{align*}
&\max_{\lambda \in \Lambda_n(X_1^n, T_n)} {\mathrm{E}}\Big[\Big| %
\sum_{i\notin I_k} \varepsilon_i X_i^{\prime }(\widehat \beta_{-k}(\lambda)
- \beta) \Big| \mid X_1^n \Big] \\
&\quad \lesssim \sqrt{n\log p}\Big(T_n + \sqrt{\frac{\log^r n}{n}}\Big)%
\max_{\lambda \in \Lambda_n(X_1^n, T_n)}({\mathrm{E}}[ \| \widehat
\beta_{-k}(\lambda) \|_0 + s\mid X_1^n ])^{1/2} \\
&\qquad + \sqrt n T_n + \sqrt{\log^r n} + \max_{\lambda \in \Lambda_n(X_1^n,
T_n)} {\mathrm{E}}[ \| \widehat \beta_{-k}(\lambda) \|_0 + s\mid X_1^n ] \\
&\quad \lesssim s + (\log p)(n T_n^2+ \log^r n) \lesssim (\log p)(s\log(p n)
+ \log^{r+1} n)
\end{align*}
with probability at least $1 - C n^{-c}$, where the second inequality
follows from \eqref{eq: getting there sparsity bound 1}. Hence, by
Markov's inequality, 
\begin{align*}
&\max_{\lambda \in \Lambda_n(X_1^n, T_n)}Q_{1 - 1/(16 K)}\Big(\Big|%
\sum_{i\notin I_k} \varepsilon_i X_i^{\prime }(\widehat \beta_{-k}(\lambda)
- \beta)\Big| \mid X_1^n \Big) \\
&\quad \leq 16 K \max_{\lambda \in \Lambda_n(X_1^n, T_n)} {\mathrm{E}}\Big[%
\Big| \sum_{i\notin I_k} \varepsilon_i X_i^{\prime }(\widehat
\beta_{-k}(\lambda) - \beta) \Big| \mid X_1^n \Big] \\
&\quad \lesssim (\log p)(s\log(p n) + \log^{r+1} n)
\end{align*}
with probability at least $1 - C n^{-c}$.

Finally, by Markov's inequality, for any $A_{1},A_2,\lambda>0$, 
\begin{align*}
& \Pr \Big(\Big|\sum_{i\in I_{k}}\varepsilon _{i}X_{i}^{\prime }(\widehat{%
\beta }_{-k}(\lambda )-\beta )\Big|>\sqrt{A_{1}A_{2}(s\log (pn) + \log^{r+1}n)}\mid X_{1}^{n}%
\Big) \\
& \quad \leq \Pr \Big(\sum_{i\in I_{k}}(X_{i}^{\prime }(\widehat{\beta }%
_{-k}(\lambda )-\beta ))^{2}>A_{2}(s\log (pn) + \log^{r+1}n)\mid X_{1}^{n}\Big) \\
& \quad \quad +{\mathrm{E}}\Big[1\Big \{ \sum_{i\in I_{k}}(X_{i}^{\prime }(%
\widehat{\beta }_{-k}(\lambda )-\beta ))^{2}\leq A_{2}(s\log (pn) + \log^{r+1}n)\Big \} \\
& \quad \quad \quad \times \frac{{\mathrm{E}}[|\sum_{i\in I_{k}}\varepsilon
_{i}X_{i}^{\prime }(\widehat{\beta }_{-k}(\lambda )-\beta )|^{2}\mid
X_{1}^{n},(Y_{i})_{i\notin I_{k}}]}{A_{1}A_{2}(s\log (pn) + \log^{r+1}n)}\mid X_{1}^{n}\Big]
\\
& \quad \lesssim \Pr \Big(\sum_{i\in I_{k}}(X_{i}^{\prime }(\widehat{\beta }%
_{-k}(\lambda )-\beta ))^{2}>A_{2}(s\log (pn) + \log^{r+1}n)\mid X_{1}^{n}\Big)+1/A_{1}.
\end{align*}%
Choosing both $A_{1}$ and $A_{2}$ here large enough and using the same
argument as that in \eqref{eq: step 2 - 2} shows that 
\begin{align*}
& \max_{\lambda \in \Lambda _{n}(X_{1}^{n},T_{n})}Q_{1-1/(16K)}\Big(\Big|%
\sum_{i\in I_{k}}\varepsilon _{i}X_{i}^{\prime }(\widehat{\beta }%
_{-k}(\lambda )-\beta )\Big|\mid X_{1}^{n}\Big) \\
& \quad \lesssim \sqrt{s\log (pn)+\log^{r+1}n}\lesssim s\log (pn) + \log^{r+1}n
\end{align*}%
with probability at least $1-Cn^{-c}$. Combining all inequalities presented
above together and using Step 1 gives \eqref{eq: step 2 statement}, which is the asserted
claim of this step.

\medskip

\noindent \textbf{Step 3.} Here we complete the proof. To do so, note that
by Lemma \ref{lem: 0.7} applied with $\kappa = 2$, for any $\lambda > 0$, 
\begin{equation*}
\Big| \| \widehat \beta(\lambda) - \beta \|_{2,n} - {\mathrm{E}}[ \|
\widehat \beta(\lambda) - \beta \|_{2,n} \mid X_1^n] \Big| \lesssim \sqrt{%
\frac{\log^r n}{n}}
\end{equation*}
with probability at least $3/4$, which implies that 
\begin{equation*}
\Big| Q_{1/2}(\| \widehat \beta(\lambda) - \beta \|_{2,n}\mid X_1^n) - {%
\mathrm{E}}[\| \widehat \beta(\lambda) - \beta \|_{2,n}\mid X_1^n] \Big| %
\lesssim \sqrt{\frac{\log^r n}{n}}.
\end{equation*}
Combining this inequality with \eqref{eq: step 2 statement} in Step 2 shows
that 
\begin{multline}  \label{eq: expectation bound for good lambda}
\Pr \bigg(\max_{\lambda \in \Lambda_n(X_1^n,T_n)}{\mathrm{E}}[\| \widehat
\beta(\lambda) - \beta \|_{2,n}\mid X_1^n] \\
> \sqrt{\frac{C s\log p}{n}}\times \sqrt{\log(p n) + s^{-1}\log^{r+1} n}\bigg) %
\leq C n^{-c}.
\end{multline}
Also, applying Lemma \ref{lem: 0.7} with $\kappa = \log(1/\alpha)\leq \log n$ and 
$$
t = \left(\frac{\tilde C\log(1/\alpha)\log^{r+1}n}{n}\right)^{1/2}
$$
with sufficiently large $\tilde C$, which can be chosen to depend only on $C_1$ and $r$, it follows that for any $\lambda > 0$,
\begin{align*}
&\Pr \Big(\Big|\Vert \widehat{\beta }(\lambda )-\beta \Vert _{2,n}-{\mathrm{E}%
}[\Vert \widehat{\beta }(\lambda)-\beta \Vert _{2,n}\mid X_{1}^{n}]\Big|>t\Big) \leq \left(\frac{C}{\tilde C\log n}\right)^{\frac{\log(1/\alpha)}{2}} \leq \frac{\alpha}{|\Lambda_n|}
\end{align*}
since $\alpha \leq e^{-2}$. Combining these inequalities and using the union bound, we obtain
\begin{multline}  \label{eq: almost there}
\Pr \bigg(\max_{\lambda \in \Lambda_n(X_1^n,T_n)}\| \widehat \beta(\lambda)
- \beta \|_{2,n} \\
> \sqrt{\frac{C s\log (p/\alpha)}{n}}\times \sqrt{\log(p n) + s^{-1}\log^{r+1}
n}\bigg) \leq \alpha + C n^{-c}.
\end{multline}
Finally, by Lemma \ref{lem: 4.51} and the union bound, 
\begin{equation}  \label{eq: there}
\Pr(\widehat \lambda \in \Lambda_n(X_1^n,T_n)) \geq 1 - Cn^{-c}.
\end{equation}
Combining the last two inequalities gives the asserted claim and completes
the proof of the theorem. \qed

\subsection{Proof of Theorem \protect \ref{thm: sparsity bound}\label{sec:
proof of theorem 4.2}}

Define $\Lambda _{n}(X_{1}^{n},T_n)$ as in Step 2 of the proof of
Theorem \ref{thm: cross validation}. Then by Assumptions \ref{as: covariates} and \ref{as: growth condition}, Lemma \ref{lem: eigenvalue bound}, Theorem \ref{lem: sparsity bound low dimensional}, and \eqref{eq: expectation bound for good
lambda} in the proof of Theorem \ref{thm: cross validation},
\begin{equation*}
\max_{\lambda \in \Lambda _{n}(X_{1}^{n},T_{n})}{\mathrm{E}}[\Vert \widehat{%
\beta }(\lambda )\Vert _{0}\mid X_{1}^{n}]\lesssim s(\log ^{2}p)(\log
(pn)+s^{-1}\log ^{r+1}n)
\end{equation*}%
with probability at least $1-Cn^{-c}$. Thus, by Markov's inequality, the
union bound, and Assumption \ref{as: candidate set}, for any $\bar{s}>0$, 
\begin{align*}
&\Pr \left( \max_{\lambda \in \Lambda _{n}(X_{1}^{n},T_{n})}\Vert \widehat{%
\beta }(\lambda )\Vert _{0}>\bar{s}\mid X_{1}^{n}\right) \\
&\qquad \lesssim s(\log
^{2}p)(\log n)(\log (pn)+s^{-1}\log ^{r+1}n)/\bar{s}
\end{align*}%
with probability at least $1-Cn^{-c}$. The asserted claim of the theorem
follows from combining this bound with \eqref{eq: there} in the proof of
Theorem \ref{thm: cross validation} and substituting 
\begin{equation*}
\bar{s}=Cs(\log ^{2}p)(\log n)(\log (pn)+s^{-1}\log ^{r+1}n)/\alpha
\end{equation*}%
with a sufficiently large constant $C>0$. This completes the proof of the theorem. \qed

\subsection{Proof of Theorem \protect \ref{thm: l1 and l2 bounds}\label{sec:
proof of theorem 4.3}}
Applying Theorem \ref{thm: sparsity bound} with $\alpha =n^{-c_{1}/4}$ shows that 
\begin{equation*}
\Vert \widehat{\beta }(\widehat{\lambda })\Vert _{0}\lesssim sn^{c_{1}/4}(\log
^{2}p)(\log n)(\log (pn)+s^{-1}\log ^{r+1}n)
\end{equation*}
with probability at least $1-Cn^{-c}$. Thus, by
Lemma \ref{lem: eigenvalue bound} and Assumptions \ref{as: covariates}(a) and \ref{as: growth condition}, 
$
\Vert \widehat{\beta }(\widehat{\lambda })-\beta \Vert _{2}\lesssim \Vert 
\widehat{\beta }(\widehat{\lambda })-\beta \Vert _{2,n}
$
with probability at least $1-Cn^{-c}$. The
asserted claim regarding $\Vert \widehat{\beta }(\widehat{\lambda })-\beta
\Vert _{2}$ follows from this bound and Theorem \ref{thm: cross validation}.

Also, by the Cauchy-Schwarz and triangle inequalities, 
\begin{equation*}
\| \widehat \beta(\widehat \lambda) - \beta \|_1 \leq \sqrt{\| \widehat
\beta(\widehat \lambda) - \beta \|_0}\| \widehat \beta(\widehat \lambda) -
\beta \|_2 \leq \sqrt{\| \widehat \beta(\widehat \lambda)\|_0 + s}\|
\widehat \beta(\widehat \lambda) - \beta \|_2.
\end{equation*}
The asserted claim regarding $\| \widehat \beta(\widehat \lambda) - \beta
\|_1$ follows from this bound, Theorem \ref{thm: sparsity bound}, and the
asserted claim regarding $\| \widehat \beta(\widehat \lambda) - \beta \|_2$.
This completes the proof of the theorem. \qed


\subsection{Proof of Corollary \ref{cor: out of sample prediction}}\label{sec: out of sample prediction}
 By Assumptions \ref{as: covariates}(a) and \ref{as: growth condition} and Theorem \ref{thm: sparsity bound},
\begin{equation*}
\|\widehat\beta(\widehat\lambda) - \beta\|_{p,2,n} \leq C\Vert \widehat{\beta }(\widehat{\lambda 
})-\beta \Vert _{2}
\end{equation*}%
with probability at least $1-Cn^{-c}$. Hence, by Theorem \ref{thm: l1 and l2 bounds},
\begin{equation*}
\|\widehat\beta(\widehat\lambda) - \beta\|_{p,2,n} \leq \sqrt{\frac{Cs\log (p/\alpha )}{n}}%
\times \sqrt{\log (pn)+s^{-1}\log ^{r+1}n},
\end{equation*}%
with probability at least $1-\alpha - Cn^{-c}$. The asserted claim follows.
\qed

\section{Proofs for Section \ref{sec: sparsity bound}\label{sec: proofs sparsity}}

In this section, we prove Theorems \ref{thm: sparsity bound via l2 norm} and \ref{lem: sparsity bound low dimensional}. Since the proofs are long, we start with a sequence of preliminary lemmas.


\subsection{Preliminary Lemmas\label{sec: preliminary lemmas sparsity}}


\begin{lemma}
\label{lem: lasso continuity} For all $\lambda >0$, the Lasso estimator $%
\widehat{\beta }(\lambda )$ given in \eqref{eq: lasso} based on the data $%
(X_{i},Y_{i})_{i=1}^{n}=(X_{i},X_{i}^{\prime }\beta +\varepsilon
_{i})_{i=1}^{n}$ has the following property: the function $(\varepsilon
_{i})_{i=1}^{n}\mapsto (X_{i}^{\prime }\widehat{\beta }(\lambda ))_{i=1}^{n}$
mapping $\mathbb{R}^{n}$ to $\mathbb{R}^{n}$ for any fixed value of $%
X_{1}^{n}=(X_{1},\dots ,X_{n})$ is well-defined and is Lipschitz-continuous
with Lipschitz constant one with respect to Euclidean norm. Moreover, there always exists a Lasso estimator 
$\widehat{\beta }(\lambda )$ such that $\Vert \widehat{\beta }(\lambda
)\Vert _{0}\leq n$ almost surely. Finally, $\hb(\lambda)$ is unique almost surely whenever the distribution of $X$ is absolutely continuous with respect to the Lebesgue measure on $\mathbb R^p$. 
\end{lemma}

\begin{proof}
All the asserted claims in this lemma can be found in the literature. Here
we give specific references for completeness. The fact that the function $%
(\varepsilon _{i})_{i=1}^{n}\mapsto (X_{i}^{\prime }\widehat{\beta }(\lambda
))_{i=1}^{n}$ is well-defined follows from Lemma 1 in \cite{T13}, which
shows that even if the solution $\widehat{\beta }(\lambda )$ of the
optimization problem \eqref{eq:
lasso} is not unique, $(X_{i}^{\prime }\widehat{\beta }(\lambda ))_{i=1}^{n}$
is the same across all solutions. The Lipschitz property then follows from
Proposition 2 in \cite{BT17}. Moreover, by discussion in Section 2.1 in \cite%
{T13}, there always exists a Lasso solution, say $\widehat{\beta }(\lambda )$,
taking the form in (10) of \cite{T13}, and such a solution satisfies $\Vert 
\widehat{\beta }(\lambda )\Vert _{0}\leq n$. Finally, the last claim follows from Lemma 4 in \cite{T13}.
\end{proof}

\begin{lemma}
\label{lem: 0.7} Suppose that Assumption \ref{as: heterogeneity}\ holds. Then for
all $\kappa \geq 1$, $n\geq e^{\kappa }$, and $\lambda >0$, we have 
\begin{equation}
\Pr \Big(\Big|\Vert \widehat{\beta }(\lambda )-\beta \Vert _{2,n}-{\mathrm{E}%
}[\Vert \widehat{\beta }-\beta \Vert _{2,n}\mid X_{1}^{n}]\Big|>t\Big)\leq
\left( \frac{C\kappa \log ^{r}n}{t^{2}n}\right) ^{\kappa /2}
\label{eq: prediction norm concentration 1}
\end{equation}%
for some constant $C>0$ depending only on $C_{1}$ and $r$.
\end{lemma}

\begin{proof}
Fix $\kappa \geq 1$, $n\geq e^{\kappa}$, and $\lambda > 0$. Also, let $\xi$
be a $N(0,1)$ random variable that is independent of the data and let $C$ be
a positive constant that depends only on $C_1$ and $r$ but whose value can change
from place to place. Then by Lemma \ref{lem: lasso continuity}, the function 
$(\varepsilon_i)_{i=1}^n\mapsto (X_i^{\prime }\widehat
\beta(\lambda))_{i=1}^n $ is Lipschitz-continuous with Lipschitz constant
one, and so is 
\begin{equation*}
(\varepsilon_i)_{i=1}^n\mapsto \left(\sum_{i=1}^n (X_i^{\prime }(\widehat
\beta(\lambda) - \beta))^2\right)^{1/2} = \sqrt n\| \widehat \beta - \beta
\|_{2,n}.
\end{equation*}
Therefore, applying Lemma \ref{lem: nongaussian concentration} with $u(x) =
(x\vee 0)^{\kappa}$ and using Markov's inequality and Assumption \ref{as:
heterogeneity} shows that for any $t>0$, 
\begin{align*}
&\Pr \Big( \| \widehat \beta(\lambda) - \beta \|_{2,n} - {\mathrm{E}}[ \|
\widehat \beta - \beta \|_{2,n}\mid X_1^n] > t \mid X_1^n\Big) \\
&\qquad \leq \Big( \frac{C_1\pi}{2 t \sqrt n} \Big)^{\kappa}{\mathrm{E}}\Big[%
\max_{1\leq i\leq n}( 1 + |e_i|^r )^{\kappa}\Big]{\mathrm{E}}[|\xi|^{\kappa}]
 \leq \Big( \frac{C}{t \sqrt n} \Big)^{\kappa}{\mathrm{E}}\Big[%
\max_{1\leq i\leq n}|e_i|^{r \kappa}\Big]{\mathrm{E}}[|\xi|^{\kappa}] \\
&\qquad \leq \Big( \frac{C}{t \sqrt n} \Big)^{\kappa}\Big({\mathrm{E}}\Big[%
\max_{1\leq i\leq n}|e_i|^{r\log n}\Big]\Big)^{\kappa/\log n}{\mathrm{E}}%
[|\xi|^{\kappa}] \\
&\qquad \leq \Big( \frac{C (r\log n)^{r/2}\sqrt{\kappa}}{t \sqrt
n} \Big)^{\kappa} = \Big( \frac{C\sqrt{\kappa \log^r n}}{t \sqrt n} \Big)%
^{\kappa} = \Big( \frac{C\kappa \log^r n}{t^2 n} \Big)^{\kappa/2}.
\end{align*}
This gives one side of the bound \eqref{eq: prediction norm concentration 1}%
. Since the other side follows similarly, the proof is complete.
\end{proof}

\begin{lemma}
\label{lem: 0.5} Suppose that Assumption \ref{as: heterogeneity}\ holds and let 
$Q^{-1}\colon \mathbb{R}^{p}\times \mathbb{R}\rightarrow \mathbb{R}$ be the
inverse of $Q\colon \mathbb{R}^{p}\times \mathbb{R}\rightarrow \mathbb{R}$
with respect to the second argument. Then for all $\lambda > 0$,
\begin{equation}
{\mathrm{E}}[\Vert \widehat{\beta }(\lambda)\Vert _{0}\mid
X_{1}^{n}]=\sum_{i=1}^{n}{\mathrm{E}}[\psi _{i}X_{i}^{\prime }(\widehat{%
\beta }(\lambda)-\beta )\mid X_{1}^{n}],  \label{eq: ZHT}
\end{equation}%
where 
\begin{equation*}
\psi _{i}=\frac{e_{i}}{Q_{2}(X_{i},e_{i})}+\frac{Q_{22}(X_{i},e_{i})}{%
Q_{2}(X_{i},e_{i})^{2}}\text{ and }e_{i}=Q^{-1}(X_{i},\varepsilon
_{i})
\end{equation*}
for all $i = 1,\dots,n$.
In addition,
\begin{align*}
&\Ep\left[\left( \|\widehat\beta(\lambda)\|_0 - \sum_{i=1}^n \psi_i X_i'(\widehat\beta - \beta) \right)^2 \mid X_1^n\right] \\
&\quad = \sum_{i=1}^n \Ep\left[\gamma_i ( X_i'(\widehat\beta(\lambda) - \beta) )^2 \mid X_1^n\right] + \Ep[ \|\widehat\beta(\lambda)\|_0 \mid X_1^n],
\end{align*}
where
$$
\gamma_i = \frac{1}{Q_2(X_i,e_i)^2} - \frac{e_i Q_{22}(X_i,e_i)}{Q_2(X_i,e_i)^3} + \frac{Q_{222}(X_i,e_i)}{Q_2(X_i,e_i)^3} - \frac{2Q_{22}(X_i,e_i)^2}{Q_2(X_i,e_i)^4}
$$
for all $i = 1,\dots,n$.
Moreover,
\begin{align*}
\mathrm{Var}\left( \sum_{i=1}^n \psi_i X_i'(\widehat\beta(\lambda) - \beta) \mid X_1^n \right)
 & \leq 2\sum_{i=1}^n\Ep\left[\Big(\gamma_i Q_2(X_i,e_i)X_i'(\hb - \beta)\Big)^2 \mid X_1^n\right]\\
 & \quad + C\Ep\Big[ \|\widehat\beta(\lambda)\|_0 + 1 \mid X_1^n\Big](\log p)(\log^r n),
\end{align*}
where $C>0$ is a constant depending only on $c_1$, $C_1$, and $r$.
\end{lemma}

\begin{remark}
Here, the inverse $Q^{-1}$ exists because by Assumption \ref{as: heterogeneity}, $Q$ is strictly increasing and continuous with respect to
its second argument.\qed
\end{remark}

\begin{proof}
This lemma extends some of the results in \cite{TT12} and \cite{BZ18} to the non-Gaussian case. All arguments in the proof are conditional on $X_1,\dots,X_n$ but we drop the conditioning sign for brevity of notation. Also, we use $C$ to denote a positive constant that depends only on $c_1$, $C_1$ and $r$ but whose value can change from place to place.

Fix $\lambda >0$ and denote $\widehat\beta = \widehat\beta(\lambda)$ and $\widehat T = \{j\in\{1,\dots,p\}\colon \widehat\beta_j \neq 0\}$. For all $i = 1,\dots,n$ we will use $X_{i\widehat T}$ to denote the sub-vector of $X_i$ in $\mathbb R^{|\widehat T|}$ corresponding to indices in $\widehat T$. By results in \cite{TT12}, we then have
\begin{equation}\label{eq: TT}
\frac{\partial X_i'(\widehat\beta - \beta)}{\partial \varepsilon_j} = X_{i\widehat T}'\left(\sum_{l=1}^n X_{l\widehat T}X_{l\widehat T}'\right)^{-1}X_{j\widehat T},\quad i,j=1,\dots,n;
\end{equation}
see, in particular, the proof of Theorem 1 there. Taking the sum over $i = j = 1,\dots,n$ and applying the trace operator on the right-hand side of this identity gives
\begin{equation}
\Vert \widehat{\beta }\Vert _{0} = |\widehat T| =\sum_{i=1}^{n} \frac{\partial (X_{i}^{\prime }(%
\widehat{\beta }-\beta ))}{\partial \varepsilon _{i}}. \label{eq: TT identity}
\end{equation}%
Also, for all $%
i=1,\dots ,n$, under Assumption \ref{as: heterogeneity} (and conditional on $X_{1}^{n}$), the random variable $\varepsilon _{i}$ is absolutely continuous
with respect to Lebesgue measure on $\mathbb{R}$ with continuously
differentiable pdf $\chi _{i}$ defined by 
\begin{equation*}
\chi _{i}(Q(X_{i},e))=\frac{\phi (e)}{Q_{2}(X_{i},e)},\quad e\in \mathbb{R},
\end{equation*}%
where $\phi $ is the pdf of the $N(0,1)$ distribution. Taking the derivative over $e$ here gives
\begin{equation*}
\chi _{i}^{\prime }(Q(X_{i},e))Q_{2}(X_{i},e)=-\frac{e\phi (e)}{%
Q_{2}(X_{i},e)}-\frac{\phi (e)Q_{22}(X_{i},e)}{Q_{2}(X_{i},e)^{2}},\quad
e\in \mathbb{R},
\end{equation*}%
and so 
\begin{equation*}
\frac{\chi _{i}^{\prime }(\varepsilon _{i})}{\chi _{i}(\varepsilon _{i})}=%
\frac{\chi _{i}^{\prime }(Q(X_{i},e_{i}))}{\chi _{i}(Q(X_{i},e_{i}))}=-\frac{%
e_{i}}{Q_{2}(X_{i},e_{i})}-\frac{Q_{22}(X_{i},e_{i})}{%
Q_{2}(X_{i},e_{i})^{2}}=-\psi _{i}.
\end{equation*}%
Therefore, by Lemma \ref{lem: integration by parts stein}, whose application is justified by Assumption \ref{as: heterogeneity} and Lemma \ref{lem: lasso continuity},
\begin{equation}
{\mathrm{E}}\left[ \frac{\partial (X_{i}^{\prime }(\widehat{\beta }-\beta ))}{\partial \varepsilon _{i}}\right] ={\mathrm{E}}%
[\psi _{i}X_{i}^{\prime }(\widehat{\beta }-\beta )],\quad i=1,\dots ,n.  \label{eq: generalized stein identity}
\end{equation}%
Combining \eqref{eq: TT identity} and \eqref{eq: generalized stein identity}
gives the first asserted claim.

To prove the second asserted claim, we proceed along the lines in the proof of Theorem 1.1 in \cite{BZ18}. Specifically, let $f_1,\dots,f_n$ be twice continuously differentiable functions mapping $\mathbb R^n$ to $\mathbb R$ with bounded first and second derivatives. Also, let $\bar\varepsilon = (\varepsilon_1,\dots,\varepsilon_n)'$. Then, it follows from Lemma \ref{lem: integration by parts stein} that
\begin{align*}
&\Ep\left[ \psi_i f_i(\bar\varepsilon)\left( \sum_{j=1}^n\psi_j f_j(\bar\varepsilon) - \sum_{l=1}^n\frac{\partial f_l(\bar\varepsilon)}{\partial \varepsilon_l} \right) \right] \\
&\quad = \Ep\left[ \frac{\partial f_i(\bar\varepsilon)}{\partial \varepsilon_i}\left( \sum_{j=1}^n\psi_j f_j(\bar\varepsilon) - \sum_{l=1}^n\frac{\partial f_l(\bar\varepsilon)}{\partial \varepsilon_l} \right) \right]\\
&\quad\quad + \Ep\left[ f_i(\bar\varepsilon)\left( \gamma_i f_i(\bar\varepsilon) + \sum_{j=1}^n \psi_j \frac{\partial f_j(\bar\varepsilon)}{\partial \varepsilon_i} - \sum_{l=1}^n \frac{\partial^2 f_l(\bar\varepsilon)}{\partial \varepsilon_l\partial \varepsilon_i} \right) \right]
\end{align*}
for all $i=1,\dots,n$ and, in addition,
$$
\Ep\left[ \psi_j f_i(\bar\varepsilon)\frac{\partial f_j(\bar\varepsilon)}{\partial \varepsilon_i} \right]
 = \Ep\left[ \frac{\partial f_i(\bar\varepsilon)}{\partial \varepsilon_j} \frac{\partial f_j(\bar\varepsilon)}{\partial \varepsilon_i} + f_i(\bar\varepsilon) \frac{\partial^2 f_j(\bar\varepsilon)}{\partial\varepsilon_i\partial\varepsilon_j} \right]
$$
for all $j=1,\dots,n$. Combining these results, rearranging the terms, and taking the sum over $i = 1,\dots,n$, we obtain
$$
\Ep\left[\left( \sum_{i=1}^n \psi_i f_i(\bar\varepsilon) - \sum_{i=1}^n \frac{\partial f_i(\bar\varepsilon)}{\partial \varepsilon_i} \right)^2\right] = \sum_{i=1}^n \Ep[\gamma_i f_i(\bar\varepsilon)^2] +\sum_{i,j=1}^n\Ep\left[ \frac{\partial f_i(\bar\varepsilon)}{\partial \varepsilon_j}\frac{\partial f_j(\bar\varepsilon)}{\partial \varepsilon_i} \right],
$$
and since all second-order derivatives cancell out, it follows from a convolution argument that the same identity holds for any Lipschitz functions $f_1,\dots,f_n$; see Appendix A of \cite{BZ18} for details. We now substitute $f_i(\bar\varepsilon) = X_i'(\widehat\beta - \beta)$ for all $i=1,\dots,n$ in this identity and note that
$$
\sum_{i,j=1}^n \frac{\partial f_i(\bar\varepsilon)}{\partial \varepsilon_j}\frac{\partial f_j(\bar\varepsilon)}{\partial \varepsilon_i} = \|\widehat\beta\|_0
$$
by \eqref{eq: TT} in this case. This gives the second asserted claim.

To prove the third asserted claim, we have by the Gaussian Poincare inequality, Theorem 3.20 in \cite{BLM13} that
\begin{align*}
&\mathrm{Var}\left( \sum_{i=1}^n \psi_iX_i'(\hb - \beta) \right) \leq \sum_{j=1}^n\Ep\left[\left( \frac{\partial}{\partial e_j}\sum_{i=1}^n \psi_i X_i'(\hb - \beta) \right)^2\right]\\
&\quad \leq 2\sum_{j=1}^n\Ep\left[\left(\frac{\partial\psi_j}{\partial e_j}X_j'(\hb - \beta)\right)^2\right]
+ 2\sum_{j=1}^n\Ep\left[\left(\sum_{i=1}^n\psi_i\frac{\partial X_i'(\hb - \beta)}{\partial e_j}\right)^2\right].
\end{align*}
Here, the first term on the right-hand side is equal to
\begin{align*}
&2\sum_{j=1}^n\Ep\left[\left( \gamma_j Q_2(X_j,e_j)X_j'(\hb - \beta) \right)^2\right].
\end{align*}
Also, by \eqref{eq: TT}, the second term is equal to
\begin{align*}
&2\sum_{j=1}^n \Ep\left[ \left( \sum_{i=1}^n\psi_i Q_2(X_j,e_j) X_{i\widehat T}'\left(\sum_{l=1}^n X_{l\widehat T}X_{l\widehat T}'\right)^{-1}X_{j\widehat T} \right)^2 \right]\\
&\quad = 2\Ep\left[ \max_{1\leq j\leq p} Q_2(X_j,e_j)^2\sum_{j=1}^n\left( \sum_{i=1}^n \psi_i X_{i\widehat T}'\left(\sum_{l=1}^n X_{l\widehat T}X_{l\widehat T}'\right)^{-1}X_{j\widehat T} \right)^2 \right]\\
&\quad = 2\Ep\left[ \max_{1\leq j\leq p}Q_2(X_j,e_j)^2 \sum_{i=1}^n \psi_iX_{i\widehat T}'\left(\sum_{l=1}^n X_{l\widehat T}X_{l\widehat T}'\right)^{-1}\sum_{j=1}^n X_{j\widehat T}\psi_j \right].
\end{align*} 
Next, observe that
$$
\sum_{i=1}^n \psi_iX_{i\widehat T}'\left(\sum_{l=1}^n X_{l\widehat T}X_{l\widehat T}'\right)^{-1}\sum_{j=1}^n X_{j\widehat T}\psi_j
$$
is equal to $\| P_{\widehat T}\bar\psi \|_2^2$, where $\bar\psi = (\psi_1,\dots,\psi_n)'$ and $P_{\widehat T}$ is the matrix projecting on $(X_{1\widehat T},\dots,X_{n\widehat T})'$. In turn, we can bound $\Ep[\| P_{\widehat T}\bar\psi \|_2^2]$ using arguments from the proof of Theorem 4.3 in \cite{BZ18}. In particular, for any $M\subset\{1,\dots,p\}$, letting $P_M$ denote the matrix projecting on $(X_{1M},\dots,X_{nM})'$, we have $\Ep[\|P_M \bar\psi\|_2^2] \leq C|M|$, and so, by Assumption \ref{as: heterogeneity} and the Hanson-Wright inequality, Theorem 1.1 in \cite{RV13},
$$
P\Big( \| P_M\bar\psi \|_2^2 > C(|M| + x) \Big)\leq e^{-x}
$$
for all $x>0$. Thus, applying the union bound twice,
$$
\Pr\left(\max_{M\subset\{1,\dots,p\}} \left( \| P_M\bar\psi \|_2^2 - C\left( |M| + \log{p\choose|M|} + \log p + x \right) \right) > 0 \right) \leq e^{-x},
$$
and so
$$
\Pr\left(\max_{M\subset\{1,\dots,p\}} \left( \| P_M\bar\psi \|_2^2 - C\Big( (|M| + 1)\log p + x \Big) \right) > 0 \right) \leq e^{-x}.
$$
By Fubini's theorem and simple calculations, we then have
\begin{equation}\label{eq: BZ projection 1}
\Ep\left[ \| P_{\widehat T}\bar\psi \|_2^2 \right] \leq C\Ep\left[\| \hb \|_0 + 1\right]\log p.
\end{equation}
Also,
\begin{equation}\label{eq: BZ projection 2}
\Ep\left[ \| P_{\widehat T}\bar\psi \|_2^4 \right] \leq  \Ep\left[\| \bar\psi \|_2^4\right]  \leq Cn^2.
\end{equation}
Hence, for a sufficiently large constant $A$ that can be chosen to depend on $c_1$, $C_1$ and $r$ only,
\begin{align*}
&\Ep\left[ \max_{1\leq j\leq p}Q_2(X_j,e_j)^2 \sum_{i=1}^n \psi_iX_{i\widehat T}'\left(\sum_{l=1}^n X_{l\widehat T}X_{l\widehat T}'\right)^{-1}\sum_{j=1}^n X_{j\widehat T}\psi_j \right]\\
&\quad = \Ep\left[\max_{1\leq j\leq p}Q_2(X_j,e_j)^2\| P_{\widehat T}\bar\psi \|_2^21\left\{ \max_{1\leq j\leq p}Q_2(X_j,e_j)^2 \leq A\log^r n \right\}\right]\\
&\quad\quad + \Ep\left[\max_{1\leq j\leq p}Q_2(X_j,e_j)^2\| P_{\widehat T}\bar\psi \|_2^21\left\{ \max_{1\leq j\leq p}Q_2(X_j,e_j)^2 > A\log^r n \right\}\right].
\end{align*}
Here, by \eqref{eq: BZ projection 1}, the first term on the right-hand side is bounded from above by $C\Ep[\|\hb\|_0 + 1](\log p)(\log^r n)$ and by \eqref{eq: BZ projection 2}, Assumption \ref{as: heterogeneity}, and H\"{o}lder's inequality, the second term is bounded from above by $C$ since $A$ is large enough. Combining all presented inequalities together gives the third asserted claim and completes the proof of the lemma.
\end{proof}

\subsection{Proof of Theorem \ref{thm: sparsity bound via l2 norm}}
All arguments in this proof are conditional on $X_1,\dots,X_n$ but we drop the conditioning sign for brevity of notation. Throughout the proof, we will assume that
\begin{equation}\label{eq: sparse eigenvalue sparsity theorem 1}
\sup_{\delta\in \mathcal S^p(s)}\|\delta\|_{2,n} \leq \bar C.
\end{equation}
Also, we use $C$ to denote a positive constant that depends only on $c_1$, $C_1$, $\bar C$, and $r$ but whose value can change from place to place.

Fix $\lambda >0$ and denote $\widehat\beta = \widehat\beta(\lambda)$, $\widehat s = \|\hb\|_0$, and $R_n = \Ep[\|\hb - \beta\|_{2,n}]$. We start with some preliminary inequalities. First, by H\"{o}lder's inequality and Assumption \ref{as: heterogeneity},
\begin{align}
&\sum_{i=1}^n \Ep\Big[\gamma_i(X_i'(\hb - \beta))^2\Big] \nonumber\\
&\quad \leq n\Ep\Big[ \|\hb - \beta\|_{2,n}^2\times\max_{1\leq i\leq n}|\gamma_i| \Big] 
\leq C(n\log n)\sqrt{\Ep[\|\hb - \beta\|_{2,n}^4]}\label{eq: sparsity proof 1}
\end{align}
and, similarly,
\begin{equation}\label{eq: sparsity proof 2}
\sum_{i=1}^n\Ep\left[\Big(\gamma_i Q_2(X_i,e_i)X_i'(\hb - \beta)\Big)^2 \right] \leq C(n\log^2n)\sqrt{\Ep[\|\hb - \beta\|_{2,n}^4]}.
\end{equation}
Second, by the triangle inequality and Fubini's theorem, 
\begin{align}
{\mathrm{E}}[\Vert \widehat{\beta }-\beta \Vert _{2,n}^{4}]&
\leq C\left(R_n^{4}+{\mathrm{E}}\left[ (\Vert \widehat{\beta }%
-\beta \Vert _{2,n}-R_{n})^{4}\right]\right) \nonumber\\
& =C\left(R_{n}^{4}+\int_{0}^{\infty }\Pr \left( |\Vert \widehat{\beta }%
-\beta \Vert _{2,n}-R_{n}|>t^{1/4}\right) dt \right)\nonumber\\
& \leq C\left( R_n^4  + \left( \frac{\log ^{r}n}{n}\right) ^{2}\right),\label{eq: sparsity proof 3}
\end{align}
where the last line follows from Lemma \ref{lem: 0.7} applied with $\kappa
=5 $ (for example). Third,
\begin{equation}\label{eq: sparsity proof 4}
\Pr\left(R_n > \| \hb - \beta \|_{2,n} + \sqrt{\frac{C\log^{r+1}n}{n}}\right) \leq \frac{1}{n} 
\end{equation}
by Lemma \ref{lem: 0.7} applied with $\kappa =\log n $.
Fourth, by Lemma 9 of \cite{BC11} and \eqref{eq: sparse eigenvalue sparsity theorem 1},
\begin{align}
\|\widehat\beta - \beta\|_{2,n}^2 
& \leq \|\widehat\beta - \beta\|_2^2\times \sup_{\delta\in\mathcal S^p(\widehat s + s)}\|\delta\|_{2,n}^2\nonumber\\
& \leq\|\widehat\beta - \beta\|_2^2\times \frac{2(\widehat s + s)}{s}\sup_{\delta\in\mathcal S^p(s)}\|\delta\|_{2,n}^2 \leq \frac{C(\widehat s + s)}{s}\|\widehat\beta - \beta\|_2^2.\label{eq: sparsity proof 5}
\end{align}
We now prove the theorem with the help of these bounds. Denote
$$
V_1 = \mathrm{Var}\left( \widehat s - \sum_{i=1}^n\psi_i X_i'(\hb - \beta) \right)\text{ and }V_2 = \mathrm{Var}\left(\sum_{i=1}^n\psi_i X_i'(\hb - \beta)\right).
$$
Then for any $\bar t > 0$, with probability at least $1 - 2/\bar t^2$, by Chebyshev's inequality and Lemma \ref{lem: 0.5},
\begin{align}
\widehat s & \leq \sum_{i=1}^n\psi_i X_i'(\hb - \beta) + \bar t\sqrt{V_1}\nonumber\\
& = \sum_{i=1}^n\psi_i X_i'(\hb - \beta) + \Ep\left[1 + \sum_{i=1}^n\psi_i X_i'(\hb - \beta)\right] - \Ep[1 + \widehat s] + \bar t\sqrt{V_1}\nonumber\\
& \leq 1 + 2\sum_{i=1}^n\psi_i X_i'(\hb - \beta) - \Ep[1 + \widehat s] + \bar t(\sqrt{V_1} + \sqrt{V_2}).\label{eq: sparsity proof 0}
\end{align}
Here, $\bar t(\sqrt{V_1} + \sqrt{V_2})$ is bounded from above by
\begin{align*}
 & C\bar t\Big( \Ep[1 + \widehat s](\log p)(\log^r n) \Big)^{1/2} + C\bar t\sqrt{n}\log n\left( R_n + \sqrt{\frac{\log^r n}{n}} \right)\\
&\quad \leq \Ep[1 + \widehat s] + C\bar t^2(\log p)(\log^r n) + C\bar t\sqrt{n}\log n\left( R_n + \sqrt{\frac{\log^{r} n}{n}} \right)
\end{align*}
by Lemma \ref{lem: 0.5} and inequalities \eqref{eq: sparsity proof 1}, \eqref{eq: sparsity proof 2}, and \eqref{eq: sparsity proof 3}. Also, with probability at least $1 - 1/n$,
$$
R_n \leq C\left(\sqrt{\frac{\widehat s + s}{s}}\|\hb - \beta\|_2 + \sqrt{\frac{\log^{r+1}n}{n}}\right)
$$
by \eqref{eq: sparsity proof 4} and \eqref{eq: sparsity proof 5}. In addition,
$$
\sum_{i=1}^n \psi_i X_i'(\widehat\beta - \beta) \leq \left\| \sum_{i=1}^n\psi_i X_i \right\|_{\infty} \times \|\widehat\beta - \beta\|_1,
$$
where $\|\widehat\beta - \beta\|_1 \leq (\widehat s + s)^{1/2}\times\|\widehat\beta - \beta\|_2$ and with probability at least $1 - 1/n$,
$$
\left\| \sum_{i=1}^n\psi_i X_i \right\|_{\infty} \leq C\sqrt{\log(p n)}\times\max_{1\leq j\leq p}\left( \sum_{i=1}^n X_{i j}^2 \right)^{1/2} \leq C\sqrt{n\log(p n)},
$$
with the first inequality following from Assumption \ref{as: heterogeneity} and the union bound and the second from \eqref{eq: sparse eigenvalue sparsity theorem 1}. Substituting all these bounds into \eqref{eq: sparsity proof 0} and using $\bar t = (ts\log(p n))^{1/2}$ with $t \geq 1$ gives
$$
\widehat s \leq Cts(\log^r n)\log^2(p n) + C\sqrt{t(\widehat s + s)n(\log^2 n)\log(p n)}\|\hb - \beta\|_{2}
$$
with probability at least $1 - 2/(t s \log(p n)) - 2/n$. Solving this inequality for $\widehat s$ gives the asserted claim and completes the proof of the theorem.
\qed

\subsection{Proof of Theorem \protect \ref{lem: sparsity bound low
dimensional}\label{sec: proof of theorem on sparsity}}
All arguments in this proof are conditional on $X_1,\dots,X_n$ but we drop the conditioning sign for brevity of notation. Throughout the proof, we will assume that \eqref{eq: sparse eigenvalue sparsity theorem 2} holds. Also, we use $C$ to denote a positive constant that depends only on $c_1$, $C_1$, $\bar c$,  $\bar C$, and $r$ but whose value can change from place to place.

Fix $\lambda > 0$ and denote $\widehat{\beta }=\widehat{\beta }(\lambda )$, $\widehat{s}=\Vert \widehat{\beta }\Vert _{0}$, $J_n = J_n(\lambda)$, and $R_n = R_n(\lambda)$. Then by
Lemma \ref{lem: 0.5}, 
\begin{equation}\label{eq: thm 52 beginning}
{\mathrm{E}}[\widehat{s}]=\sum_{i=1}^{n}{\mathrm{E}}[\psi
_{i}X_{i}^{\prime }(\widehat{\beta }-\beta )]=\mathcal{I}_{1}+%
\mathcal{I}_{2},
\end{equation}
where 
\begin{align}
& \mathcal{I}_{1}=\sum_{i=1}^{n}{\mathrm{E}}\left[  \psi
_{i}X_{i}^{\prime }(\widehat{\beta }-\beta )1\left \{ \bar c \Vert \widehat{\beta }%
-\beta \Vert _{2}\leq \Vert \widehat{\beta }-\beta \Vert _{2,n}\right \}
\right] ,  \label{eq: sparsity i1} \\
& \mathcal{I}_{2}=\sum_{i=1}^{n}{\mathrm{E}}\left[ \psi
_{i}X_{i}^{\prime }(\widehat{\beta }-\beta )1\left \{ \bar c\Vert \widehat{\beta }%
-\beta \Vert _{2}>\Vert \widehat{\beta }-\beta \Vert _{2,n}\right \} \right].  \label{eq: sparsity i2}
\end{align}%
We bound $\mathcal{I}_{1}$ and $\mathcal{I}_{2}$
in turn. To bound $\mathcal{I}_{1}$, note that as in \eqref{eq: sparsity proof 3} of the proof of Theorem \ref{thm: sparsity bound via l2 norm},
\begin{equation}\label{eq: rn4 bound second sparsity theorem}
{\mathrm{E}}[\Vert \widehat{\beta }-\beta \Vert _{2,n}^{4}] \leq C\left( R_n^4  + \left( \frac{\log ^{r}n}{n}\right) ^{2}\right).
\end{equation}
Also, by Assumption \ref{as: heterogeneity} and \eqref{eq: sparse eigenvalue sparsity theorem 2},
\begin{align*}
{\mathrm{E}}\left[ \left \Vert \sum_{i=1}^{n}\psi _{i}X_{i}\right
\Vert _{\infty }^{4}\right] \leq C n^2\log^2 p.
\end{align*}%
Therefore, 
\begin{align*}
\mathcal{I}_{1}& \leq {\mathrm{E}}\left[ \left \Vert {\sum
\nolimits_{i=1}^{n}}\psi _{i}X_{i}\right \Vert _{\infty }\Vert \widehat{%
\beta }-\beta \Vert _{1}1\left \{\bar c \Vert \widehat{\beta }-\beta \Vert
_{2}\leq \Vert \widehat{\beta }-\beta \Vert _{2,n}\right \} \right] \\
& \leq {\mathrm{E}}\left[ \left \Vert {\sum \nolimits_{i=1}^{n}}\psi
_{i}X_{i}\right \Vert _{\infty }\Vert \widehat{\beta }-\beta \Vert _{2}(%
\widehat{s}+s)^{1/2}1\left \{ \bar c\Vert \widehat{\beta }-\beta \Vert _{2}\leq
\Vert \widehat{\beta }-\beta \Vert _{2,n}\right \} \right] \\
& \leq C {\mathrm{E}}\left[ \left \Vert {\sum \nolimits_{i=1}^{n}}\psi
_{i}X_{i}\right \Vert _{\infty }\Vert \widehat{\beta }-\beta \Vert _{2,n}(%
\widehat{s}+s)^{1/2}1\left \{ \bar c\Vert \widehat{\beta }-\beta \Vert _{2}\leq
\Vert \widehat{\beta }-\beta \Vert _{2,n}\right \} \right] \\
& \leq  C\left( {\mathrm{E}}\left[ \left \Vert {\sum \nolimits_{i=1}^{n}}\psi
_{i}X_{i}\right \Vert _{\infty }^{2}\Vert \widehat{\beta }-\beta \Vert
_{2,n}^{2}\right] {\mathrm{E}}\left[ \widehat{s}+s\right] \right) ^{1/2},
\end{align*}%
where the last line follows from H\"{o}lder's inequality. In turn, 
\begin{align*}
& \left( {\mathrm{E}}\left[ \left \Vert {\sum \nolimits_{i=1}^{n}}\psi
_{i}X_{i}\right \Vert _{\infty }^{2}\Vert \widehat{\beta }-\beta \Vert
_{2,n}^{2}\right] \right) ^{1/2} \\
& \qquad \leq \left( {\mathrm{E}}\left[ \left \Vert {\sum \nolimits_{i=1}^{n}%
}\psi _{i}X_{i}\right \Vert _{\infty }^{4}\right] {\mathrm{E}}%
\left[ \Vert \widehat{\beta }-\beta \Vert _{2,n}^{4}\right]
\right) ^{1/4} \\
& \qquad \leq C \sqrt{n\log p}\left( {\mathrm{E}}\left[ \Vert \widehat{%
\beta }-\beta \Vert _{2,n}^{4}\right] \right) ^{1/4}\leq C
\sqrt{n\log p}\left( R_{n}+\sqrt{\frac{\log ^{r}n}{n}}\right) .
\end{align*}%
Thus, 
\begin{equation}
\mathcal{I}_{1}\leq C \sqrt{n\log p}\left( R_{n}+\sqrt{\frac{%
\log ^{r}n}{n}}\right) ({\mathrm{E}}[\widehat{s}+s])^{1/2}.
\label{eq: i1 bound}
\end{equation}%
To bound $\mathcal{I}_{2}$, denote 
\begin{equation*}
A_{1}=\sqrt{\sum_{i=1}^{n}\psi _{i}^{2}}\quad \text{and}\quad A_{2}=\sqrt{%
\sum_{i=1}^{n}(X_{i}^{\prime }(\widehat{\beta }-\beta ))^{2}}=\sqrt{n}\Vert 
\widehat{\beta }-\beta \Vert _{2,n}
\end{equation*}%
and observe that by H\"{o}lder's inequality, 
\begin{equation*}
\mathcal{I}_{2}\leq {\mathrm{E}}\left[ A_{1}A_{2}1\left \{\bar c \Vert \widehat{%
\beta }-\beta \Vert _{2}>\Vert \widehat{\beta }-\beta \Vert _{2,n}\right \}
\right] \leq \mathcal{I}_{2,1}+\mathcal{I}_{2,2},
\end{equation*}%
where 
\begin{align*}
& \mathcal{I}_{2,1}={\mathrm{E}}\left[ A_{1}A_{2}1\left \{ A_{1}A_{2}>\mathcal{C}%
\left( nR_{n}+\sqrt{n\log ^{r+1}n}\right) \right \} \right] , \\
& \mathcal{I}_{2,2}=\mathcal{C}\left( nR_{n}+\sqrt{n\log ^{r+1}n}%
\right) \Pr \left(\bar c \Vert \widehat{\beta }-\beta \Vert _{2}>\Vert \widehat{%
\beta }-\beta \Vert _{2,n}\right) ,
\end{align*}%
for some constant $\mathcal{C}$ to be chosen later. To bound $\mathcal{I}_{2,1}$%
, note that 
\begin{equation*}
\Pr (A_{1}>\sqrt{\mathcal{C}n})\leq 1/n
\end{equation*}%
by Chebyshev's inequality and Assumption \ref{as: heterogeneity} if $\mathcal{C}$
is large enough. Also, by Lemma \ref{lem: 0.7} applied with $\kappa =\log n$, 
\begin{equation*}
\Pr \left( A_{2}/\sqrt{n}>R_{n}+\sqrt{\mathcal{C}\log ^{r+1}n/n}\right) \leq 1/n
\end{equation*}%
if $\mathcal{C}$ is large enough. Hence, if we set $\mathcal{C}$ in the definition
of $\mathcal{I}_{2,1}$ and $\mathcal{I}_{2,2}$ large enough (note that $\mathcal{%
C}$ can be chosen to depend only on $c_1$, $C_{1}$, and $r$), it follows that 
\begin{align*}
& \Pr \left( A_{1}A_{2}>\mathcal{C}\left( nR_{n}+\sqrt{n\log ^{r+1}n}%
\right) \right)\\
&\quad \leq \Pr (A_{1}>\sqrt{\mathcal{C}n})+\Pr \left( A_{2}/%
\sqrt{n}>R_{n}+\sqrt{\mathcal{C}\log ^{r+1}n/n}\right)\leq 2/n,
\end{align*}
and so $\mathcal{I}_{2,1}$ is bounded from above by 
\begin{align*}
& \left( {\mathrm{E}}[A_{1}^{2}A_{2}^{2}]\right) ^{1/2}\left(
\Pr \left( A_{1}A_{2}>\mathcal{C}\left( nR_{n}+\sqrt{n\log ^{r+1}n}%
\right) \right) \right) ^{1/2} \\
& \quad  \leq C n\left( R_{n}+\sqrt{\log ^{r}n/n}\right) /\sqrt{n}%
\leq C(\sqrt{n}R_{n}+\sqrt{\log ^{r}n}),
\end{align*}%
where the first inequality follows from H\"{o}lder's inequality, Assumption \ref{as: heterogeneity}, and \eqref{eq: rn4 bound second sparsity theorem}.
Also, by \eqref{eq: sparse eigenvalue sparsity theorem 2} and Markov's inequality, 
\begin{equation*}
\Pr \left( \bar c\Vert \widehat{\beta }-\beta \Vert _{2}>\Vert \widehat{\beta }%
-\beta \Vert _{2,n}\right) \leq \Pr \left( \widehat{s}%
+s>J_{n}\right) \leq \frac{{\mathrm{E}}[\widehat{s}+s]}{J_{n}},
\end{equation*}%
so that 
\begin{equation*}
\mathcal{I}_{2,2}\leq \frac{C(nR_{n}+\sqrt{n\log ^{r+1}n})}{J_{n}}%
{\mathrm{E}}[\widehat{s}+s],
\end{equation*}%
and so%
\begin{equation*}
\mathcal{I}_{2,2}\leq 3^{-1}{\mathrm{E}}[\widehat{s}+s]
\end{equation*}%
for all $n\geq n_0$ depending only on $c_1$, $C_1$, $\bar c$, $\bar C$, and $r$ by the definition of $J_n$.

Combining all inequalities, it follows that for all $n\geq n_0$,
\begin{align}
{\mathrm{E}}[\widehat{s}]& \leq C\sqrt{n\log p}\left(
R_{n}+\sqrt{\frac{\log ^{r}n}{n}}\right) ({\mathrm{E}}[\widehat{s}%
+s])^{1/2} \nonumber\\
& \quad +C\left( \sqrt{n}R_{n}+\sqrt{\log ^{r}n}\right) +3^{-1}{%
\mathrm{E}}[\widehat{s}+s],\label{eq: thm 52 end}
\end{align}
and so 
\begin{equation*}
{\mathrm{E}}[\Vert \widehat{\beta }\Vert _{0}]={\mathrm{E}}[\widehat{s}]\leq s+C(\log p)(nR_{n}^{2}+\log ^{r}n).
\end{equation*}%
This gives the asserted claim for all $n\geq n_0$ and since the asserted claim for $n<n_0$ is trivial, the proof is complete. \qed

\section{Technical Lemmas\label{sec: technical lemmas}}

\begin{lemma}
\label{lem: maximal inequality} Let $X_{1},\dots ,X_{n}$ be independent
centered random vectors in $\mathbb{R}^{p}$ with $p\geq 2$. Define $%
Z=\left
\Vert \sum_{i=1}^{n}X_{i}\right \Vert _{\infty }$, $M=\max_{1\leq
i\leq n}\left \Vert X_{i}\right \Vert _{\infty }$, and $\sigma
^{2}=\max_{1\leq j\leq p}\sum_{i=1}^{n}{\mathrm{E}}[X_{ij}^{2}]$. Then 
\begin{equation*}
{\mathrm{E}}[Z]\leq K\left( \sigma \sqrt{\log p}+\sqrt{{\mathrm{E}}[M^{2}]}%
\log p\right)
\end{equation*}%
where $K>0$ is a universal constant.
\end{lemma}

\begin{proof}
See Lemma E.1 in \cite{CCK14}.
\end{proof}

\begin{lemma}
\label{lem: maximal inequality 2} Consider the setting of Lemma \ref{lem:
maximal inequality}. For every $\eta >0$, $t>0$, and $q\geq 1$, we have 
\begin{equation*}
P\left( Z\geq (1+\eta ){\mathrm{E}}[Z]+t\right) \leq \exp (-t^{2}/(3\sigma
^{2}))+K{\mathrm{E}}[M^{q}]/t^{q}
\end{equation*}%
where the constant $K>0$ depends only on $\eta $ and $q$.
\end{lemma}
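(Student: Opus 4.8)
The statement is a Fuk--Nagaev-type deviation inequality for the supremum $Z=\max_{1\le j\le p}\big|\sum_{i=1}^n X_{ij}\big|$ of an empirical process: it interpolates a sub-Gaussian tail with variance proxy $\sigma^2$ and a polynomial tail governed by the $q$-th moment of the envelope $M$. The plan is to prove it by a truncation argument combined with Talagrand's concentration inequality. One can equivalently view $Z=\big\|\sum_{i=1}^n X_i\big\|$ for the $\ell^\infty$-norm on $\mathbb R^p$, so that $\|X_i\|=\max_j|X_{ij}|$ and $M=\max_i\|X_i\|$, and simply invoke a Fuk--Nagaev inequality for sums of independent Banach-space-valued random vectors from the literature; the argument sketched below is the self-contained version of that.

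For the direct argument, fix a truncation level $v>0$ (eventually of order $t$ up to a constant depending on $\eta$ and $q$) and split
\[
\Pr\big(Z\ge(1+\eta)\Ep[Z]+t\big)\le \Pr(M>v)+\Pr\big(Z\ge(1+\eta)\Ep[Z]+t,\ M\le v\big).
\]
By Markov's inequality the first term is at most $\Ep[M^q]/v^q$, which supplies the polynomial term in the bound once $v$ is fixed. On the event $\{M\le v\}$ every coordinate obeys $|X_{ij}|\le M_i\le v$, where $M_i:=\max_j|X_{ij}|$, so that there $Z=\max_j\big|\sum_i X_{ij}1\{M_i\le v\}\big|$. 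Passing to the centred truncated vectors with coordinates $Y_{ij}:=X_{ij}1\{M_i\le v\}-\Ep[X_{ij}1\{M_i\le v\}]$, which are independent across $i$, mean zero, bounded by $2v$, and satisfy $\sum_i\Ep[Y_{ij}^2]\le\sum_i\Ep[X_{ij}^2]\le\sigma^2$, and writing $\widetilde Z:=\max_j\big|\sum_i Y_{ij}\big|$ and $B:=\max_j\big|\sum_i\Ep[X_{ij}1\{M_i\le v\}]\big|=\max_j\big|\sum_i\Ep[X_{ij}1\{M_i>v\}]\big|$, we get $Z\,1\{M\le v\}\le\widetilde Z+B$. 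A symmetrization-and-contraction estimate gives $\Ep[\widetilde Z]\le C\,\Ep[Z]$ and a crude moment bound controls $B$; absorbing both into the $(1+\eta)\Ep[Z]$ slack reduces matters to bounding $\Pr(\widetilde Z\ge\Ep[\widetilde Z]+t)$, for which Talagrand's inequality in Bousquet's form gives a bound of the shape $\exp\!\big(-t^2/(2\sigma^2+C_1 v\,\Ep[\widetilde Z]+C_2 vt)\big)$.

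The main obstacle is the tension between the two pieces of the target bound. Bousquet's denominator carries the linear term $C_2 vt$, so recovering the clean exponent $t^2/(3\sigma^2)$ forces $v$ to be \emph{small}, of order $\sigma^2/(t+\Ep[Z])$; but then the large-jump term $\Ep[M^q]/v^q$ is of order $\Ep[M^q](t+\Ep[Z])^q/\sigma^{2q}$, which is acceptable only while $t(t+\Ep[Z])\lesssim\sigma^2$, i.e.\ in a moderate-deviation range. Outside that range one abandons the truncation and instead combines Markov's inequality with a moment inequality of the form $\|(Z-\Ep[Z])_+\|_q\lesssim_q\sqrt q\,\sigma+q\,\|M\|_q$ (Talagrand's moment bound, or a Hoffmann--J\o{}rgensen-type estimate, which is precisely where the envelope moment $\Ep[M^q]$ rather than $\sum_i\Ep[M_i^q]$ enters), whose two resulting pieces are dominated by $\exp(-t^2/(3\sigma^2))$ and by $K\Ep[M^q]/t^q$ in that range; in the intermediate range (relevant when $\Ep[Z]\gg\sigma$) one exploits that $\exp(-t^2/(3\sigma^2))$ is then bounded below by a positive constant and that $\Pr(Z\ge(1+\eta)\Ep[Z]+t)\le\Ep[Z]/((1+\eta)\Ep[Z]+t)\le(1+\eta)^{-1}$ by Markov. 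Patching these regimes and tracking all constants so that they depend on $\eta$ and $q$ only, uniformly in $t$ — in particular keeping the truncation mean shift $B$ below $\eta\Ep[Z]$ and choosing the regime thresholds correctly — is the delicate bookkeeping; everything else is routine.
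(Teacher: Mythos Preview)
The paper does not actually prove this lemma: its entire proof is the sentence ``See Lemma~E.2 in \cite{CCK14}.'' So there is nothing to compare against except the strategy behind that cited result, which is indeed a truncation-plus-Talagrand argument of the type you sketch (CCK14 in turn relies on Adamczak's Fuk--Nagaev inequality for suprema of unbounded empirical processes). In that sense your overall plan is the standard one.

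There is, however, a genuine gap in your ``large-$t$'' regime. You claim that applying Markov to the moment bound $\|(Z-\Ep[Z])_+\|_q\lesssim_q \sqrt q\,\sigma+q\|M\|_q$ produces two pieces that are respectively dominated by $\exp(-t^2/(3\sigma^2))$ and by $K\Ep[M^q]/t^q$. The second piece is fine, but the first piece is of order $(\sqrt q\,\sigma/t)^q$, i.e.\ it decays only polynomially in $t/\sigma$, whereas $\exp(-t^2/(3\sigma^2))$ decays like a Gaussian. For $t/\sigma$ large the polynomial term is \emph{larger} than the Gaussian term, so the domination you assert fails, and no choice of $K=K(\eta,q)$ can repair this since the constant in front of the exponential is fixed at~$1$. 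Your three-regime patching therefore leaves the range $t\gg\sigma$ uncovered.

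The fix in the literature is not to abandon truncation for large $t$ but to truncate at $v=c_\eta t$ \emph{throughout}, so that the large-jump term is immediately $\Pr(M>c_\eta t)\le K_{\eta,q}\Ep[M^q]/t^q$. On the bounded part one then applies Talagrand's inequality and uses the slack $\eta\Ep[Z]$ to absorb the $v\Ep[\widetilde Z]$ contribution to the variance term; the remaining $vt$-type term in the Bernstein denominator is handled by noting that whenever it dominates $\sigma^2$ the resulting bound is already below the polynomial term $K\Ep[M^q]/t^q$ (this is where Hoffmann--J\o{}rgensen enters, to control $\Ep[\widetilde Z]$ and the centering correction by $\Ep[Z]$ plus a piece absorbable into~$t$). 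The bookkeeping is exactly as delicate as you warn, but the regime split you propose does not close.
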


\begin{proof}
See Lemma E.2 in \cite{CCK14}.
\end{proof}

\begin{remark}
In Lemmas \ref{lem: maximal inequality} and \ref{lem: maximal inequality 2}, if, in addition, we assume that $X_1,\dots,X_n$ are Gaussian, then $\Ep[Z] \leq \sigma\sqrt{2\log p}$ by Lemma A.3.1 in \cite{T11} and for every $t>0$, $\Pr(Z > \Ep[Z] + t) \leq \text{exp}(-t^2/(2\sigma^2))$ by Theorem 2.1.1 in \cite{AT07}. \qed
\end{remark}

\begin{lemma}
\label{lem: rv} Let $X_{1},\dots ,X_{n}$ be i.i.d. random vectors in $%
\mathbb{R}^{p}$ with $p\geq 2$. Also, let $K=({\mathrm{E}}[\max_{1\leq i\leq
n}\max_{1\leq j\leq p}|X_{ij}^{2}])^{1/2}$ and for $\ell \geq 1$, let 
\begin{equation*}
\delta _{n}=\frac{K\sqrt{\ell \log p}}{\sqrt{n}}\left( 1+(\log \ell )(\log
^{1/2}n)\right) .
\end{equation*}%
Moreover, let $\mathcal{S}^{p}(\ell )=\{ \theta \in \mathbb{R}^{p}\colon
\Vert \theta \Vert =1$ and $\Vert \theta \Vert _{0}\leq \ell \}$. Then 
\begin{equation*}
{\mathrm{E}}\left[ \sup_{\theta \in \mathcal{S}^{p}(\ell )}\left \vert \frac{%
1}{n}\sum_{i=1}^{n}(X_{i}^{\prime }\theta )^{2}-{\mathrm{E}}[(X_{1}^{\prime
}\theta )^{2}]\right \vert \right] \leq C\left( \delta _{n}^{2}+\delta
_{n}\sup_{\theta \in \mathcal{S}^{p}(\ell )}\left( {\mathrm{E}}%
[(X_{1}^{\prime }\theta )^{2}]\right) ^{1/2}\right),
\end{equation*}%
where $C> 0$ is a universal constant.
\end{lemma}

\begin{proof}
See Lemma B.1 in \cite{BCC15}. See also \cite{RV08} for the original result.
\end{proof}

\begin{remark}
If $X_1,\dots,X_n$ are centered Gaussian random vectors in $\mathbb R^p$ with $p\geq 2$, then for any $\epsilon_1,\epsilon_2,\ell>0$ such that $\epsilon_1 + \epsilon_2 < 1$ and $\ell\leq \min(p,\epsilon_1^2 n)$,
$$
\sup_{\theta \in \mathcal{S}^{p}(\ell )}\left \vert \frac{%
1}{n}\sum_{i=1}^{n}(X_{i}^{\prime }\theta )^{2}-{\mathrm{E}}[(X_{1}^{\prime
}\theta )^{2}]\right \vert \leq 3(\epsilon_1 + \epsilon_2)\sup_{\theta\in\mathcal S^p(\ell)}\Ep[(X_1'\theta)^2] 
$$
with probability at least $1 - 2p^me^{-n\epsilon_2^2/2}$ by the proof of Proposition 2 in \cite{ZH08}.
\qed
\end{remark}

\begin{lemma}
\label{lem: integration by parts stein}
Let $\varepsilon$ be a random variable that is absolutely continuous with respect
to Lebesgue measure on $\mathbb{R}$ with continuously differentiable pdf $\chi$ and suppose that $f\colon \mathbb R\to\mathbb R$ is either Lipschitz-continuous or continuously differentiable with finite $\Ep[|f'(\varepsilon)|]$. Suppose also that both $\Ep[|f(\varepsilon)|]$ and $\Ep[|f(\varepsilon)\chi'(\varepsilon)|/\chi(\varepsilon)]$ are finite. Then
\begin{equation}\label{eq: stein integration by parts}
{\mathrm{E}}[f^{\prime }(\varepsilon)] = -{\mathrm{E}}[f(\varepsilon)\chi^{%
\prime }(\varepsilon)/\chi(\varepsilon)].
\end{equation}
\end{lemma}

\begin{remark}
When $\varepsilon$ has a $N(0,\sigma^2)$ distribution, the formula \eqref{eq: stein integration by parts} reduces to the well-known Stein identity, $\Ep[f'(\varepsilon)] = \Ep[\varepsilon f(\varepsilon)]/\sigma^2$.\qed
\end{remark}

\begin{proof}
The proof follows immediately from integration by parts and the Lebesgue dominated convergence theorem; for example, see Section 13.1.1 in \cite{CGS} for similar results.
\end{proof}

\begin{lemma}
\label{lem: nongaussian concentration} Let $e = (e_1,\dots,e_n)$ be a
standard Gaussian random vector and let $Q_i\colon \mathbb{R}\to \mathbb{R}$%
, $i=1,\dots,n$ be some strictly increasing continuously differentiable
functions. Denote $\varepsilon = (\varepsilon_1,\dots,\varepsilon_n)$ where $%
\varepsilon_i = Q_i(e_i)$, $i=1,\dots,n$, and let $f\colon \mathbb{R}^n\to%
\mathbb{R}$ be Lipschitz-continuous with Lipschitz constant $L>0$. Then for
any convex $u\colon \mathbb{R}\to \mathbb{R}_{+}$, the random variable 
\begin{equation*}
V = f(\varepsilon) = f(\varepsilon_1,\dots,\varepsilon_n)
\end{equation*}
satisfies the following inequality: 
\begin{equation*}
{\mathrm{E}}[u(V - {\mathrm{E}}[V])] \leq {\mathrm{E}}\left[ u\left( \frac{%
\pi L}{2}\max_{1\leq i\leq n} Q_i^{\prime }(e_i) \xi \right) \right],
\end{equation*}
where $\xi$ is a standard Gaussian random variable that is independent of $e$.
\end{lemma}

\begin{remark}
The proof of this lemma given below mimics the well-known interpolation proof of the Gaussian concentration inequality for Lipschitz functions; see Theorem 2.1.12 in \cite{T12} for example.\qed
\end{remark}

\begin{proof}
To prove the asserted claim, let $\tilde e=(\tilde e_{1},\dots,\tilde e_{n})$
be another standard Gaussian random vector that is independent of $e$. Also,
define 
\begin{equation*}
F(x)=F(x_{1},\dots,x_{n})=f(Q_1(x_{1}),\dots,Q_n(x_{n})),\quad
x=(x_{1},\dots,x_{n})\in \mathbb{R}^{n}.
\end{equation*}
Then 
\begin{align*}
&\mathbb{E}[u(V-\mathbb{E}V)] =\mathbb{E}\Big[u(F(e)-\mathbb{E}F(e))\Big] =%
\mathbb{E}\Big[u(F(e)-\mathbb{E}F(\tilde e))\Big] \\
& \qquad =\mathbb{E}\Big[u(\mathbb{E}[F(e)-F(\tilde e)\mid e])\Big] \leq%
\mathbb{E}\Big[\mathbb{E}\Big[u(F(e)-F(\tilde e))\mid e\Big]\Big] \\
&\qquad =\mathbb{E}\Big[u(F(e)-F(\tilde e))\Big].
\end{align*}
Further, define 
\begin{equation*}
h(\theta)=F\Big(\tilde e\cos(\pi \theta/2)+e\sin(\pi \theta/2)\Big),\quad
\theta \in[0,1],
\end{equation*}
so that $h(1)=F(e)$, $h(0)=F(\tilde e)$, and for all $\theta \in(0,1),$ 
\begin{align*}
h^{\prime }(\theta) & =\frac{\pi}{2}\sum_{i=1}^{n}F_{i}(\tilde e\cos(\pi
\theta/2)+e\sin(\pi \theta/2))(e_{j}\cos(\pi \theta/2)-\tilde e_{j}\sin(\pi
\theta/2)) \\
&=\frac{\pi}{2}(\nabla F(\widetilde{W}_{\theta}),W_{\theta}),
\end{align*}
where we denoted 
\begin{equation*}
W_{\theta}=e\cos(\pi \theta/2)-\tilde e\sin(\pi \theta/2)\text{ and }%
\widetilde{W}_{\theta}=\tilde e\cos(\pi \theta/2)+e\sin(\pi \theta/2).
\end{equation*}
Note that for each $\theta \in(0,1)$, the random vectors $W_{\theta}$ and $%
\widetilde{W}_{\theta}$ are independent standard Gaussian. Hence, 
\begin{align*}
{\mathrm{E}}\Big[u(F(e)-F(\tilde e))\Big] & =\mathbb{E}\Big[u(h(1)-h(0))%
\Big] =\mathbb{E}\Big[u\Big(\int_{0}^{1}h^{\prime }(\theta)d\theta \Big)\Big]
\\
& \leq \mathbb{E}\Big[\int_{0}^{1}u(h^{\prime }(\theta))d\theta \Big] =%
\mathbb{E}\Big[\int_{0}^{1}u\Big(\frac{\pi}{2}(\nabla F(\widetilde{W}%
_{\theta}),W_{\theta})\Big)d\theta \Big] \\
& =\int_{0}^{1}\mathbb{E}\Big[u\Big(\frac{\pi}{2}(\nabla F(\widetilde{W}%
_{\theta}),W_{\theta})\Big)\Big]d\theta \\
& =\int_{0}^{1}\mathbb{E}\Big[u\Big(\frac{\pi}{2}(\nabla F(e),\tilde e)\Big)%
\Big]d\theta =\mathbb{E}\Big[u\Big(\frac{\pi}{2}(\nabla F(e),\tilde e)\Big)%
\Big].
\end{align*}
Next, note that since $e$ and $\tilde e$ are independent standard Gaussian
random vectors, conditional on $e$, the random variable $(\nabla F(e),\tilde
e)$ is zero-mean Gaussian with variance 
\begin{align*}
\sum_{i=1}^{n}\Big(\frac{\partial F}{\partial e_i}(e)\Big)^{2} &
=\sum_{i=1}^{n}\Big(\frac{\partial f}{\partial \varepsilon_i}(\varepsilon)%
\Big)^{2}(Q^{\prime }_i(e_{i}))^{2} \\
& \leq \max_{1\leq i\leq n}(Q^{\prime }_i(e_{i}))^{2}\sum_{i=1}^{n}\Big(%
\frac{\partial f}{\partial \varepsilon_i}(\varepsilon)\Big)^{2}\leq
L^{2}\max_{1\leq i\leq n}(Q^{\prime }_i(e_{i}))^{2}.
\end{align*}
Therefore, using the fact that $u$ is convex, we conclude that 
\begin{align*}
\mathbb{E}\Big[u\Big(\frac{\pi}{2}(\nabla F(e),\tilde e)\Big)\Big] & =%
\mathbb{E}\Big[\mathbb{E}\Big[u\Big(\frac{\pi}{2}(\nabla F(e),\tilde e)\Big)%
\mid e\Big]\Big] \\
& =\mathbb{E}\Big[\mathbb{E}\Big[u\Big(\frac{\pi}{2}\Big(\sum_{i=1}^{n}\Big(%
\frac{\partial F}{\partial e_i}(e)\Big)^{2}\Big)^{1/2}\xi \Big)\mid e\Big]%
\Big] \\
& \leq \mathbb{E}\Big[\mathbb{E}\Big[u\Big(\frac{\pi L}{2}\max_{1\leq i\leq
n}Q^{\prime }_i(e_{i})\xi \Big)\mid e\Big]\Big] \\
& =\mathbb{E}\Big[u\Big(\frac{\pi L}{2}\max_{1\leq i\leq n}Q^{\prime
}_i(e_{i})\xi \Big)\Big],
\end{align*}
where $\xi$ is a standard Gaussian random variable that is independent of
the vector $e$. Combining presented inequalities gives the asserted claim.
\end{proof}

\begin{lemma}
\label{lem: quantile properties} Let $X_{1},\dots ,X_{m}$ be random
variables (not necessarily independent). Then for all $\alpha \in (0,1)$, 
\begin{equation*}
Q_{1-\alpha }(X_{1}+\dots +X_{m})\leq Q_{1-\alpha /(2m)}(X_{1})+\dots
+Q_{1-\alpha /(2m)}(X_{m}),
\end{equation*}%
where for any random variable $Z$ and any number $\alpha \in (0,1)$, $%
Q_{\alpha }(Z)$ denotes the $\alpha $th quantile of the distribution of $Z$,
i.e. $Q_{\alpha }(Z)=\inf \{z\in \mathbb{R}\colon \alpha \leq \Pr (Z\leq
z)\} $.
\end{lemma}

\begin{proof}
To prove the asserted claim, suppose to the contrary that 
\begin{equation*}
Q_{1-\alpha }(X_{1}+\dots +X_{m})>Q_{1-\alpha /(2m)}(X_{1})+\dots
+Q_{1-\alpha /(2m)}(X_{m}).
\end{equation*}%
Then by the union bound, 
\begin{align*}
\alpha & \leq \Pr (X_{1}+\dots +X_{m}\geq Q_{1-\alpha }(X_{1}+\dots +X_{m}))
\\
& \leq \Pr (X_{1}+\dots +X_{m}>Q_{1-\alpha /(2m)}(X_{1})+\dots +Q_{1-\alpha
/(2m)}(X_{m})) \\
& \leq \Pr (X_{1}>Q_{1-\alpha /(2m)}(X_{1}))+\dots +\Pr (X_{m}>Q_{1-\alpha
/(2m)}(X_{m})) \\
& \leq \alpha /(2m)+\dots +\alpha /(2m)=\alpha /2,
\end{align*}%
which is a contradiction. Thus, the asserted claim follows.
\end{proof}


\newpage

\begin{figure}[tbp]
\includegraphics[width=13cm]{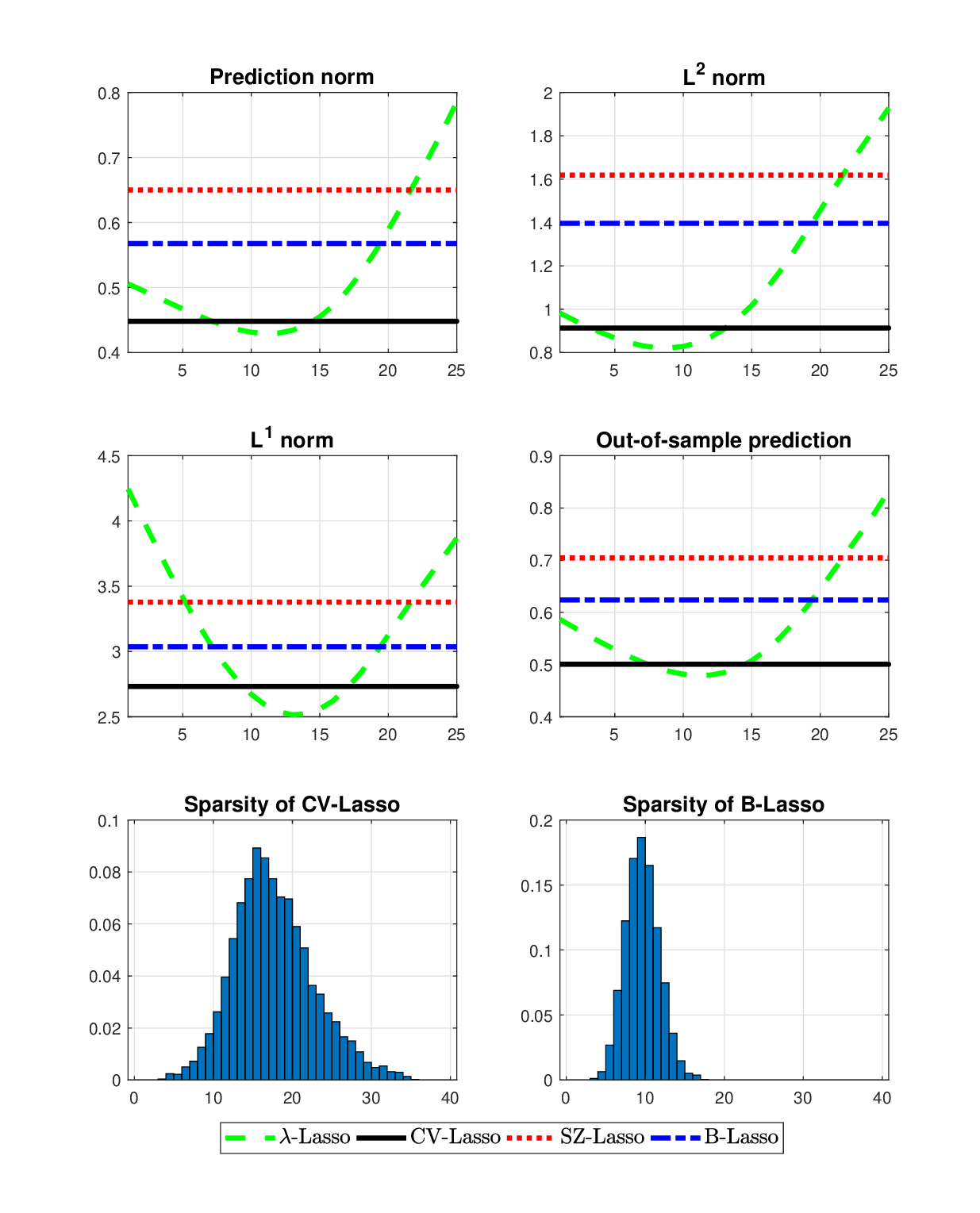}
\caption{DGP1, $n = 100$, $p=40$, and $\protect \rho=0.75$. The top-left,
top-right, middle-left, and middle-right panels show the mean of estimation
error of Lasso estimators in the prediction, $L^2$, $L^1$, and out-of-sample
prediction norms. The dashed line represents the mean of estimation error of
the Lasso estimator as a function of $\protect \lambda$ (we perform the Lasso
estimator for each value of $\protect \lambda$ in the candidate set $%
\Lambda_n $; we sort the values in $\Lambda_n$ from the smallest to the
largest, and put the order of $\protect \lambda$ on the horizontal axis; we
only show the results for values of $\protect \lambda$ up to order 25 as
these give the most meaningful comparisons). The solid, dotted, and
dashed-dotted horizontal lines represent the mean of the estimation error of
the CV-Lasso, SZ-Lasso, and B-Lasso estimators, respectively.}
\label{fig:1}
\end{figure}

\begin{figure}[tbp]
\includegraphics[width=13cm]{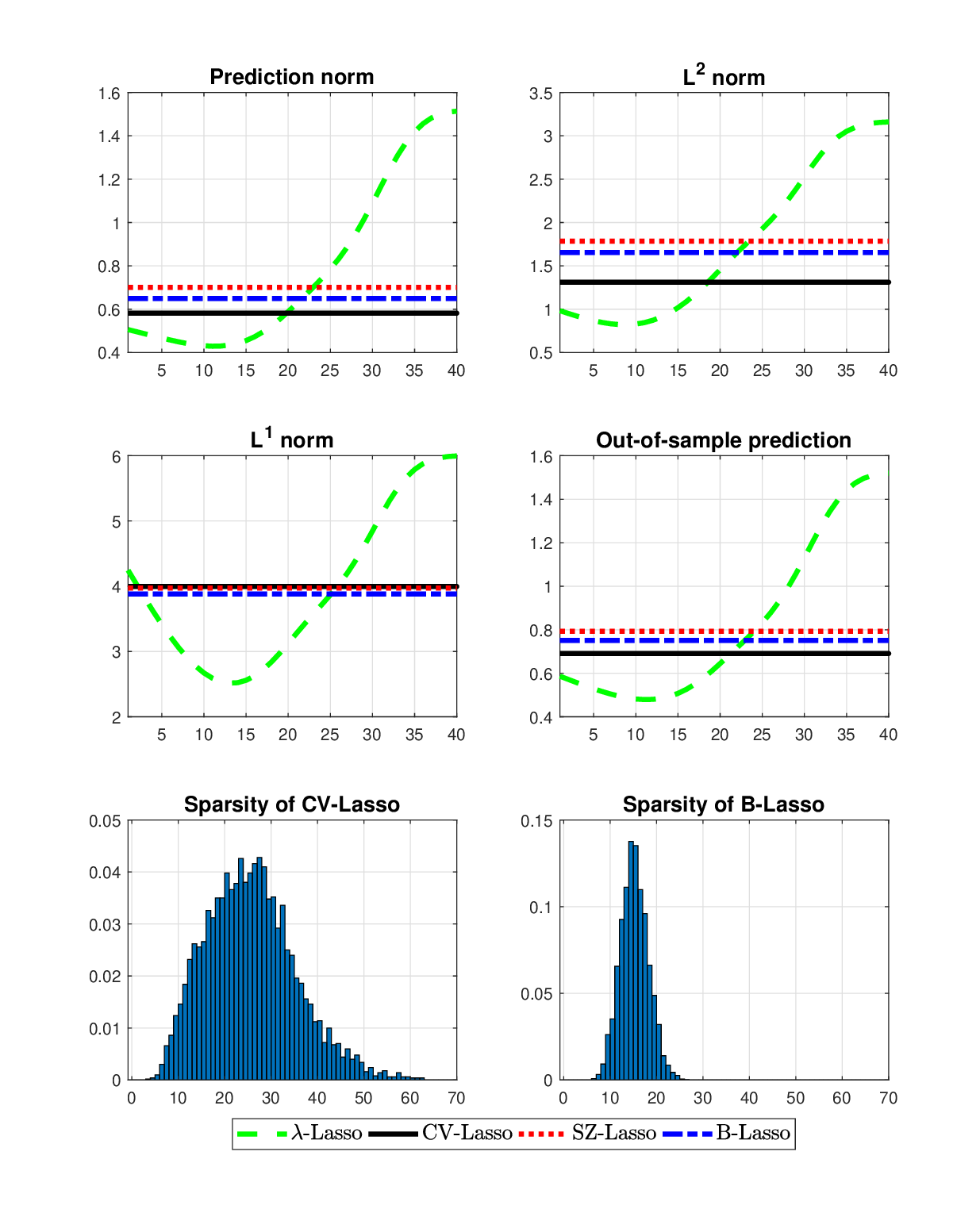}
\caption{DGP1, $n = 100$, $p=400$, and $\protect \rho=0.75$. The top-left,
top-right, middle-left, and middle-right panels show the mean of estimation
error of Lasso estimators in the prediction, $L^2$, $L^1$, and out-of-sample
prediction norms. The dashed line represents the mean of estimation error of
the Lasso estimator as a function of $\protect \lambda$ (we perform the Lasso
estimator for each value of $\protect \lambda$ in the candidate set $%
\Lambda_n $; we sort the values in $\Lambda_n$ from the smallest to the
largest, and put the order of $\protect \lambda$ on the horizontal axis; we
only show the results for values of $\protect \lambda$ up to order 25 as
these give the most meaningful comparisons). The solid, dotted, and
dashed-dotted horizontal lines represent the mean of the estimation error of
the CV-Lasso, SZ-Lasso, and B-Lasso estimators, respectively.}
\label{fig:2}
\end{figure}

\newpage

\begin{landscape}
	\begin{table}[tbp]
		\begin{center}
			\caption{The mean of estimation error of Lasso estimators} 
			\begin{tabular}{cccccccccccc}
				\hline \hline
				& \multicolumn{11}{c}{DGP1 ($\rho = 0.5$)} \\ \cline{2-12}
				& \multicolumn{3}{c}{Prediction norm} &  & \multicolumn{3}{c}{$L^{2}$ %
					norm} &  & \multicolumn{3}{c}{Out-of-Sample prediction norm} \\ 
				\cline{2-4}\cline{6-8}\cline{10-12}
				& {\small CV-Lasso} & $\lambda ${\small -Lasso} & {\small B-Lasso} &  & 
				{\small CV-Lasso}   & $\lambda ${\small -Lasso} & {\small B-Lasso} &  & 
				{\small CV-Lasso}   & $\lambda ${\small -Lasso} & {\small B-Lasso} \\ 
				\cline{2-4}\cline{6-8}\cline{10-12}
				\multicolumn{1}{l}   {\small (n, p)=(100, 40)} %
				& {\small 0.4252} & {\small 0.4097} & {\small 0.4435} &  & 
				{\small 0.6164} & {\small 0.5700} & {\small 0.7013} &  & 
				{\small 0.4701} & {\small 0.4530} & {\small 0.4883} \\
				
				\multicolumn{1}{l}   {\small (n, p)=(100, 100)} %
				& {\small 0.5243} & {\small 0.5040} & {\small 0.5303} &  & 
				{\small 0.8206} & {\small 0.7598} & {\small 0.8897} &  & 
				{\small 0.6091} & {\small 0.5885} & {\small 0.6139} \\
				
				\multicolumn{1}{l}   {\small (n, p)=(100, 400)} %
				& {\small 0.7023} & {\small 0.6448} & {\small 0.6595} &  & 
				{\small 1.2629} & {\small 1.1624} & {\small 1.2548} &  & 
				{\small 0.8852} & {\small 0.8474} & {\small 0.8565} \\
				
				\multicolumn{1}{l}   {\small (n, p)=(400, 40)} %
				& {\small 0.2116} & {\small 0.2047} & {\small 0.2174} &  & 
				{\small 0.2875} & {\small 0.2634} & {\small 0.3186} &  & 
				{\small 0.2164} & {\small 0.2095} & {\small 0.2224} \\
				
				\multicolumn{1}{l}   {\small (n, p)=(400, 100)} %
				& {\small 0.2581} & {\small 0.2501} & {\small 0.2561} &  & 
				{\small 0.3674} & {\small 0.3301} & {\small 0.3790} &  & 
				{\small 0.2667} & {\small 0.2588} & {\small 0.2648} \\
				
				\multicolumn{1}{l}   {\small (n, p)=(400, 400)} %
				& {\small 0.3300} & {\small 0.3206} & {\small 0.3206} &  & 
				{\small 0.5018} & {\small 0.4546} & {\small 0.4807} &  & 
				{\small 0.3473} & {\small 0.3391} & {\small 0.3391} \\ \hline
				
				\multicolumn{1}{l}{} & \multicolumn{11}{c}{DGP2 ($\rho = 0.5$)} \\ \cline{2-12}
				\multicolumn{1}{l}{} & \multicolumn{3}{c}{Prediction norm} &  & 
				\multicolumn{3}{c}{$L^{2}$ norm} &  & \multicolumn{3}{c}{Out-of-Sample prediction norm} \\ 
				\cline{2-4}\cline{6-8}\cline{10-12}
				
				\multicolumn{1}{l}   {\small (n, p)=(100, 40)} %
				& {\small 0.7532} & {\small 0.7123} & {\small 0.7672} &  & 
				{\small 1.1041} & {\small 0.9907} & {\small 1.2107} &  & 
				{\small 0.8293} & {\small 0.7857} & {\small 0.8419} \\
				
				\multicolumn{1}{l}   {\small (n, p)=(100, 100)} %
				& {\small 0.9237} & {\small 0.8641} & {\small 0.8917} &  & 
				{\small 1.4644} & {\small 1.3044} & {\small 1.4792} &  & 
				{\small 1.0551} & {\small 1.0048} & {\small 1.0264} \\
				
				\multicolumn{1}{l}   {\small (n, p)=(100, 400)} %
				& {\small 1.1497} & {\small 1.0465} & {\small 1.0493} &  & 
				{\small 1.9868} & {\small 1.8541} & {\small 1.8962} &  & 
				{\small 1.3631} & {\small 1.3103} & {\small 1.3118} \\
				
				\multicolumn{1}{l}   {\small (n, p)=(400, 40)} %
				& {\small 0.3647} & {\small 0.3521} & {\small 0.3746} &  & 
				{\small 0.4961} & {\small 0.4529} & {\small 0.5485} &  & 
				{\small 0.3731} & {\small 0.3603} & {\small 0.3831} \\
				
				\multicolumn{1}{l}   {\small (n, p)=(400, 100)} %
				& {\small 0.4470} & {\small 0.4325} & {\small 0.4431} &  & 
				{\small 0.6351} & {\small 0.5717} & {\small 0.6550} &  & 
				{\small 0.4616} & {\small 0.4473} & {\small 0.4577} \\
				
				\multicolumn{1}{l}   {\small (n, p)=(400, 400)} %
				& {\small 0.5739} & {\small 0.5564} & {\small 0.5561} &  & 
				{\small 0.8714} & {\small 0.7882} & {\small 0.8333} &  & 
				{\small 0.6037} & {\small 0.5885} & {\small 0.5882} \\ \hline

				& \multicolumn{11}{c}{DGP1 ($\rho = 0.75$)} \\ \cline{2-12}
				& \multicolumn{3}{c}{Prediction norm} &  & \multicolumn{3}{c}{$L^{2}$ %
					norm} &  & \multicolumn{3}{c}{Out-of-Sample prediction norm} \\ 
				\cline{2-4}\cline{6-8}\cline{10-12}
				& {\small CV-Lasso} & $\lambda ${\small -Lasso} & {\small B-Lasso} &  & 
				{\small CV-Lasso} & $\lambda ${\small -Lasso} & {\small B-Lasso} &  & 
				{\small CV-Lasso} & $\lambda ${\small -Lasso} & {\small B-Lasso} \\ 
				\cline{2-4}\cline{6-8}\cline{10-12}
				\multicolumn{1}{l}   {\small (n, p)=(100, 40)} %
				& {\small 0.4481} & {\small 0.4292} & {\small 0.5677} &  & 
				{\small 0.9133} & {\small 0.8213} & {\small 1.3963} &  & 
				{\small 0.5005} & {\small 0.4791} & {\small 0.6238} \\
				
				\multicolumn{1}{l}   {\small (n, p)=(100, 100)} %
				& {\small 0.5817} & {\small 0.5486} & {\small 0.6496} &  & 
				{\small 1.3110} & {\small 1.1144} & {\small 1.6547} &  & 
				{\small 0.6907} & {\small 0.6611} & {\small 0.7514} \\
				
				\multicolumn{1}{l}   {\small (n, p)=(100, 400)} %
				& {\small 0.7616} & {\small 0.6957} & {\small 0.7288} &  & 
				{\small 2.0360} & {\small 1.8350} & {\small 2.0207} &  & 
				{\small 0.9836} & {\small 0.9525} & {\small 0.9543} \\
				
				\multicolumn{1}{l}   {\small (n, p)=(400, 40)} %
				& {\small 0.2206} & {\small 0.2141} & {\small 0.2829} &  & 
				{\small 0.4143} & {\small 0.3745} & {\small 0.6556} &  & 
				{\small 0.2263} & {\small 0.2196} & {\small 0.2894} \\
				
				\multicolumn{1}{l}   {\small (n, p)=(400, 100)} %
				& {\small 0.2782} & {\small 0.2717} & {\small 0.3322} &  & 
				{\small 0.5381} & {\small 0.4688} & {\small 0.7766} &  & 
				{\small 0.2897} & {\small 0.2830} & {\small 0.3436} \\
				
				\multicolumn{1}{l}   {\small (n, p)=(400, 400)} %
				& {\small 0.3847} & {\small 0.3771} & {\small 0.4112} &  & 
				{\small 0.8217} & {\small 0.6751} & {\small 0.9774} &  & 
				{\small 0.4151} & {\small 0.4081} & {\small 0.4402} \\ \hline
				
				\multicolumn{1}{l}{} & \multicolumn{11}{c}{DGP2 ($\rho = 0.75$)} \\ \cline{2-12}
				\multicolumn{1}{l}{} & \multicolumn{3}{c}{Prediction norm} &  & 
				\multicolumn{3}{c}{$L^{2}$ norm} &  & \multicolumn{3}{c}{Out-of-Sample prediction norm} \\ 
				\cline{2-4}\cline{6-8}\cline{10-12}
				
				\multicolumn{1}{l}   {\small (n, p)=(100, 40)} %
				& {\small 0.7730} & {\small 0.7285} & {\small 0.8393} &  & 
				{\small 1.6151} & {\small 1.3895} & {\small 1.9690} &  & 
				{\small 0.8520} & {\small 0.8072} & {\small 0.9105} \\
				
				\multicolumn{1}{l}   {\small (n, p)=(100, 100)} %
				& {\small 0.9619} & {\small 0.8843} & {\small 0.9407} &  & 
				{\small 2.1316} & {\small 1.8093} & {\small 2.2295} &  & 
				{\small 1.0938} & {\small 1.0293} & {\small 1.0631} \\
				
				\multicolumn{1}{l}   {\small (n, p)=(100, 400)} %
				& {\small 1.2454} & {\small 1.0586} & {\small 1.0740} &  & 
				{\small 2.8271} & {\small 2.4914} & {\small 2.6602} &  & 
				{\small 1.3966} & {\small 1.3298} & {\small 1.3298} \\
				
				\multicolumn{1}{l}   {\small (n, p)=(400, 40)} %
				& {\small 0.3811} & {\small 0.3696} & {\small 0.4876} &  & 
				{\small 0.7141} & {\small 0.6427} & {\small 1.1292} &  & 
				{\small 0.3907} & {\small 0.3788} & {\small 0.4984} \\
				
				\multicolumn{1}{l}   {\small (n, p)=(400, 100)} %
				& {\small 0.4859} & {\small 0.4719} & {\small 0.5710} &  & 
				{\small 0.9443} & {\small 0.8132} & {\small 1.3320} &  & 
				{\small 0.5061} & {\small 0.4920} & {\small 0.5910} \\
				
				\multicolumn{1}{l}   {\small (n, p)=(400, 400)} %
				& {\small 0.6790} & {\small 0.6499} & {\small 0.6834} &  & 
				{\small 1.5102} & {\small 1.1683} & {\small 1.6067} &  & 
				{\small 0.7229} & {\small 0.7028} & {\small 0.7291} \\ \hline  
				
				\hline
			\end{tabular}
		\end{center}
	\end{table}
\end{landscape}

\newpage

\begin{table}[tbp]
\caption{Probabilities for the number of non-zero coefficients of the
CV-Lasso estimator hitting different brackets}
\label{tab:2}
\begin{center}
\begin{tabular}[H]{lcccccccc}
\hline \hline
& \multicolumn{8}{c}{DGP1 ($\rho = 0.5$)} \\ \cline{2-9}
& {\small [0, 5]} & {\small [6, 10]} & {\small [11, 15]} & {\small [16, 20]}
& {\small [21, 25]} & {\small [26, 30]} & {\small [31, 35]} & {\small [36,
p] } \\ \cline{2-9}
{\small (n, p)=(100, 40)} & {\small 0.0008} & {\small 0.0766} & {\small %
0.3598} & {\small 0.3548} & {\small 0.1582} & {\small 0.0390} & {\small %
0.0088} & {\small 0.0020} \\ 
{\small (n, p)=(100, 100)} & {\small 0.0006} & {\small 0.0120} & {\small %
0.0822} & {\small 0.2146} &  {\small 0.2606} & {\small 0.1994} & {\small %
0.1186} & {\small 0.1120} \\ 
{\small (n, p)=(100, 400)} & {\small 0.0010} & {\small 0.0190} & {\small %
0.0480} & {\small 0.0760} &  {\small 0.0978} & {\small 0.1196} & {\small %
0.1288} & {\small 0.5098} \\ 
{\small (n, p)=(400, 40)} & {\small 0.0006} & {\small 0.0964} & {\small %
0.3926} & {\small 0.3460} & {\small 0.1292} & {\small 0.0316} & {\small %
0.0034} & {\small 0.0002} \\ 
{\small (n, p)=(400, 100)} & {\small 0.0006} & {\small 0.0176} & {\small %
0.1404} & {\small 0.2624} &  {\small 0.2596} & {\small 0.1780} & {\small %
0.0828} & {\small 0.0586} \\ 
{\small (n, p)=(400, 400)} & {\small 0.0000} & {\small 0.0016} & {\small %
0.0212} & {\small 0.0728} &  {\small 0.1372} & {\small 0.1618} & {\small %
0.1664} & {\small 0.4390} \\ \hline
& \multicolumn{8}{c}{DGP2 ($\rho = 0.5$)} \\ \cline{2-9}
& {\small [0, 5]} & {\small [6, 10]} & {\small [11, 15]} & {\small [16, 20]}
& {\small [21, 25]} & {\small [26, p]} & {\small [31, p]} & {\small [36, p]}
\\ \cline{2-9}
{\small (n, p)=(100, 40)} & {\small 0.0142} & {\small 0.1436} & {\small %
0.3418} & {\small 0.3070} & {\small 0.1402} & {\small 0.0432} & {\small %
0.0094} & {\small 0.0006} \\ 
{\small (n, p)=(100, 100)} & {\small 0.0158} & {\small 0.1096} & {\small %
0.1866} & {\small 0.2186} &  {\small 0.1828} & {\small 0.1338} & {\small %
0.0754} & {\small 0.0774} \\ 
{\small (n, p)=(100, 400)} & {\small 0.0310} & {\small 0.0988} & {\small %
0.1586} & {\small 0.1752} &  {\small 0.1446} & {\small 0.1042} & {\small %
0.0830} & {\small 0.2046} \\ 
{\small (n, p)=(400, 40)} & {\small 0.0008} & {\small 0.1030} & {\small %
0.4032} & {\small 0.3334} & {\small 0.1258} & {\small 0.0268} & {\small %
0.0060} & {\small 0.0010} \\ 
{\small (n, p)=(400, 100)} & {\small 0.0002} & {\small 0.0202} & {\small %
0.1358} & {\small 0.2530} & {\small 0.2684} & {\small 0.1704} & {\small %
0.0814} & {\small 0.0706} \\ 
{\small (n, p)=(400, 400)} & {\small 0.0002} & {\small 0.0020} & {\small %
0.0274} & {\small 0.0798} & {\small 0.1280} & {\small 0.1590} & {\small %
0.1592} & {\small 0.4444} \\ \hline
& \multicolumn{8}{c}{DGP1 ($\rho = 0.75$)} \\ \cline{2-9}
& {\small [0, 5]} & {\small [6, 10]} & {\small [11, 15]} & {\small [16, 20]}
& {\small [21, 25]} & {\small [26, p]} & {\small [31, p]} & {\small [36, p]}
\\ \cline{2-9}
{\small (n, p)=(100, 40)} & {\small 0.0028} & {\small 0.0448} & {\small %
0.2658} & {\small 0.3920} & {\small 0.2050} & {\small 0.0716} & {\small %
0.0178} & {\small 0.0002} \\ 
{\small (n, p)=(100, 100)} & {\small 0.0006} & {\small 0.0316} & {\small %
0.1080} & {\small 0.1604} & {\small 0.1948} & {\small 0.2000} & {\small %
0.1470} & {\small 0.1576} \\ 
{\small (n, p)=(100, 400)} & {\small 0.0206} & {\small 0.0194} & {\small %
0.0506} & {\small 0.1110} & {\small 0.1534} & {\small 0.1660} & {\small %
0.1398} & {\small 0.3392} \\ 
{\small (n, p)=(400, 40)} & {\small 0.0000} & {\small 0.0278} & {\small %
0.2926} & {\small 0.4222} & {\small 0.1966} & {\small 0.0510} & {\small %
0.0090} & {\small 0.0008} \\ 
{\small (n, p)=(400, 100)} & {\small 0.0000} & {\small 0.0002} & {\small %
0.0136} & {\small 0.1156} & {\small 0.2480} & {\small 0.2920} & {\small %
0.1836} & {\small 0.1470} \\ 
{\small (n, p)=(400, 400)} & {\small 0.0000} & {\small 0.0000} & {\small %
0.0002} & {\small 0.0004} & {\small 0.0060} & {\small 0.0192} & {\small %
0.0530} & {\small 0.9212} \\ \hline
& \multicolumn{8}{c}{DGP2 ($\rho = 0.75$)} \\ \cline{2-9}
& {\small [0, 5]} & {\small [6, 10]} & {\small [11, 15]} & {\small [16, 20]}
& {\small [21, 25]} & {\small [26, p]} & {\small [31, p]} & {\small [36, p]}
\\ \cline{2-9}
{\small (n, p)=(100, 40)} & {\small 0.0254} & {\small 0.2152} & {\small %
0.3326} & {\small 0.2546} & {\small 0.1206} & {\small 0.0392} & {\small %
0.0116} & {\small 0.0008} \\
{\small (n, p)=(100, 100)} & {\small 0.0904} & {\small 0.1024} & {\small %
0.2192} & {\small 0.2262} & {\small 0.1606} & {\small 0.0958} & {\small %
0.0502} & {\small 0.0552} \\ 
{\small (n, p)=(100, 400)} & {\small 0.3916} & {\small 0.1022} & {\small %
0.0988} & {\small 0.0906} & {\small 0.0826} & {\small 0.0650} & {\small %
0.0558} & {\small 0.1134} \\ 
{\small (n, p)=(400, 40)} & {\small 0.0002} & {\small 0.0290} & {\small %
0.2976} & {\small 0.4314} & {\small 0.1862} & {\small 0.0468} & {\small %
0.0082} & {\small 0.0006} \\ 
{\small (n, p)=(400, 100)} & {\small 0.0000} & {\small 0.0050} & {\small %
0.0282} & {\small 0.1264} &  {\small 0.2370} & {\small 0.2820} & {\small %
0.1804} & {\small 0.1410} \\ 
{\small (n, p)=(400, 400)} & {\small 0.0002} & {\small 0.0134} & {\small %
0.0582} & {\small 0.0974} &  {\small 0.1156} & {\small 0.1020} & {\small %
0.0860} & {\small 0.5272} \\ \hline \hline
\end{tabular}%
\end{center}
\end{table}

\begin{table}[tbp]
\caption{Probabilities for $\max_{1\leq j\leq p}n^{-1}|\textstyle{\
\sum_{i=1}^n}X_{i j}\protect \varepsilon_i|/\widehat \protect \lambda$ hitting
different brackets}
\begin{center}
\begin{tabular}{lccccccc}
\hline \hline
& \multicolumn{7}{c}{DGP1 ($\rho = 0.50$)} \\ \cline{2-8}
& {\small [0, 0.5)} & {\small [0.6,\ 1)} & {\small [1, 1.5)} & {\small [1.5,
2)} & {\small [2, 2.5)} & {\small [2.5, 3)} & {\small [3, }$\infty ${\small %
) } \\ \cline{2-8}
{\small (n, p)=(100, 40)} & {\small 0.0002} & {\small 0.0910} & {\small %
0.3458} & {\small 0.2842} & {\small 0.1460} & {\small 0.0670} & {\small %
0.0648} \\ 
{\small (n, p)=(100, 100)} & {\small 0.0000} & {\small 0.1560} & {\small %
0.4376} & {\small 0.2470} & {\small 0.0910} & {\small 0.0322} & {\small %
0.0338} \\ 
{\small (n, p)=(100, 400)} & {\small 0.0116} & {\small 0.3262} & {\small %
0.3374} & {\small 0.1396} & {\small 0.0592} & {\small 0.0282} & {\small %
0.0566} \\ 
{\small (n, p)=(400, 40)} & {\small 0.0000} & {\small 0.1118} & {\small %
0.4292} & {\small 0.3042} & {\small 0.0988} & {\small 0.0364} & {\small %
0.0182} \\ 
{\small (n, p)=(400, 100)} & {\small 0.0000} & {\small 0.2648} & {\small %
0.5784} &  {\small 0.1362} & {\small 0.0158} & {\small 0.0032} & {\small %
0.0004} \\ 
{\small (n, p)=(400, 400)} & {\small 0.0000} & {\small 0.5828} & {\small %
0.3972} &  {\small 0.0162} & {\small 0.0004} & {\small 0.0000} & {\small %
0.0000} \\ \hline
& \multicolumn{7}{c}{DGP2 ($\rho = 0.50$)} \\ \cline{2-8}
& {\small [0, 0.5)} & {\small [0.6,\ 1)} & {\small [1, 1.5)} & {\small [1.5,
2)} & {\small [2, 2.5)} & {\small [2.5, 3)} & {\small [3, }$\infty ${\small %
) } \\ \cline{2-8}
{\small (n, p)=(100, 40)} & {\small 0.0020} & {\small 0.1522} & {\small %
0.3296} & {\small 0.2502} & {\small 0.1322} & {\small 0.0596} & {\small %
0.0674} \\ 
{\small (n, p)=(100, 100)} & {\small 0.0084} & {\small 0.3096} & {\small %
0.3772} &  {\small 0.1650} & {\small 0.0624} & {\small 0.0254} & {\small %
0.0208} \\ 
{\small (n, p)=(100, 400)} & {\small 0.0394} & {\small 0.5254} & {\small %
0.2252} & {\small 0.0616} & {\small 0.0210} & {\small 0.0090} & {\small %
0.0252} \\ 
{\small (n, p)=(400, 40)} & {\small 0.0002} & {\small 0.1170} & {\small %
0.4452} & {\small 0.2860} & {\small 0.1006} & {\small 0.0302} & {\small %
0.0204} \\ 
{\small (n, p)=(400, 100)} & {\small 0.0000} & {\small 0.2676} & {\small %
0.5656} &  {\small 0.1422} & {\small 0.0198} & {\small 0.0022} & {\small %
0.0014} \\ 
{\small (n, p)=(400, 400)} & {\small 0.0000} & {\small 0.5908} & {\small %
0.3894} & {\small 0.0156} & {\small 0.0008} &  {\small 0.0002} & {\small %
0.0000} \\ \hline
& \multicolumn{7}{c}{DGP1 ($\rho = 0.75$)} \\ \cline{2-8}
& {\small [0, 0.5)} & {\small [0.6,\ 1)} & {\small [1, 1.5)} & {\small [1.5,
2)} & {\small [2, 2.5)} & {\small [2.5, 3)} & {\small [3, }$\infty ${\small %
) } \\ \cline{2-8}
{\small (n, p)=(100, 40)} & {\small 0.0000} & {\small 0.0224} & {\small %
0.1220} & {\small 0.2250} & {\small 0.2012} & {\small 0.1488} & {\small %
0.2796} \\ 
{\small (n, p)=(100, 100)} & {\small 0.0008} & {\small 0.1144} & {\small %
0.2546} &  {\small 0.2306} & {\small 0.1698} & {\small0.0944} & {\small %
0.1312} \\ 
{\small (n, p)=(100, 400)} & {\small 0.0316} & {\small 0.4068} & {\small %
0.3408} &  {\small 0.1072} & {\small 0.0346} & {\small 0.0164} & {\small %
0.0284} \\ 
{\small (n, p)=(400, 40)} & {\small 0.0000} & {\small 0.0098} & {\small %
0.1384} & {\small 0.2800} & {\small 0.2620} & {\small 0.1526} & {\small %
0.1572} \\ 
{\small (n, p)=(400, 100)} & {\small 0.0000} & {\small 0.0144} & {\small %
0.2918} &  {\small 0.4250} & {\small 0.1868} & {\small 0.0592} & {\small %
0.0228} \\ 
{\small (n, p)=(400, 400)} & {\small 0.0000} & {\small 0.0684} & {\small %
0.6724} &  {\small 0.2304} & {\small 0.0242} & {\small 0.0040} & {\small %
0.0006} \\ \hline
& \multicolumn{7}{c}{DGP2 ($\rho = 0.75$)} \\ \cline{2-8}
& {\small [0, 0.5)} & {\small [0.6,\ 1)} & {\small [1, 1.5)} & {\small [1.5,
2)} & {\small [2, 2.5)} & {\small [2.5, 3)} & {\small [3, }$\infty ${\small %
) } \\ \cline{2-8}
{\small (n, p)=(100, 40)} & {\small 0.0062} & {\small 0.1090} & {\small %
0.2424} & {\small 0.2142} & {\small 0.1508} & {\small 0.1040} & {\small %
0.1674} \\ 
{\small (n, p)=(100, 100)} & {\small 0.0686} & {\small 0.2298} & {\small %
0.3256} &  {\small 0.1842} & {\small 0.0798} & {\small 0.0382} & {\small %
0.0518} \\ 
{\small (n, p)=(100, 400)} & {\small 0.3616} & {\small 0.3000} & {\small %
0.1594} & {\small 0.0508} & {\small 0.0186} & {\small 0.0080} & {\small %
0.0118} \\ 
{\small (n, p)=(400, 40)} & {\small 0.0000} & {\small 0.0102} & {\small %
0.1306} & {\small 0.2918} & {\small 0.2750} & {\small 0.1482} & {\small %
0.1442} \\ 
{\small (n, p)=(400, 100)} & {\small 0.0000} & {\small 0.0292} & {\small %
0.2984} &  {\small 0.4072} & {\small 0.1864} & {\small 0.0560} & {\small %
0.0226} \\ 
{\small (n, p)=(400, 400)} & {\small 0.0004} & {\small 0.3798} & {\small %
0.4626} & {\small 0.1344} & {\small 0.0134} & {\small 0.0016} & {\small %
0.0002} \\ \hline \hline
\end{tabular}%
\end{center}
\end{table}

\clearpage

\end{document}